\documentclass[11pt]{article}

\usepackage[margin=1in]{geometry}
\usepackage[T1]{fontenc}
\usepackage{lmodern}
\usepackage{amsmath,amssymb,amsthm,mathrsfs}
\usepackage{microtype}
\usepackage{indentfirst}
\usepackage{xcolor}
\usepackage{tikz}
\usetikzlibrary{arrows.meta,matrix,positioning}
\usepackage[colorlinks=true,linkcolor=blue,citecolor=blue,urlcolor=blue]{hyperref}
\hypersetup{
	pdftitle={Quasipolynomial density bounds for K-point configurations in Z\string^d}
}

\numberwithin{equation}{section}

\theoremstyle{plain}
\newtheorem{theorem}{Theorem}[section]
\newtheorem{proposition}[theorem]{Proposition}
\newtheorem{lemma}[theorem]{Lemma}
\newtheorem{cor}[theorem]{Corollary}
\theoremstyle{remark}

\newcommand{\R}{\mathbb{R}}

\newcommand{\Z}{\mathbb{Z}}
\newcommand{\N}{\mathbb{N}}

\newcommand{\T}{\mathbb{T}}
\newcommand{\PP}{\mathbb{P}}
\newcommand{\E}{\mathbb{E}}

\newcommand{\be}{\beta}

\newcommand{\De}{\Delta}

\newcommand{\si}{\sigma}

\newcommand{\1}{\mathbf{1}}

\newcommand{\subs}{\subseteq}

\newcommand{\abs}[1]{\lvert #1 \rvert}
\newcommand{\norm}[1]{\lVert #1 \rVert}

\newcommand{\rank}{\operatorname{rank}}

\definecolor{crossblue}{HTML}{2F6FAD}
\definecolor{sorange}{HTML}{C66A22}

\tikzset{
	simplexvertex/.style={
		circle,
		draw=black!70,
		fill=white,
		line width=0.5pt,
		minimum size=5.8mm,
		inner sep=0pt,
		font=\small
	},
	svertex/.style={
		simplexvertex,
		fill=sorange,
		draw=sorange!70!black,
		text=white
	},
	tvertex/.style={
		simplexvertex,
		fill=crossblue,
		draw=crossblue!65!black,
		text=white
	},
	crossing/.style={
		draw=black!68,
		line width=1.02pt
	},
	crossinghidden/.style={
		crossing,
		dash pattern=on 2.5pt off 1.6pt,
		opacity=0.72
	},
	tinternal/.style={
		draw=crossblue,
		line width=1.35pt
	},
	sinternal/.style={
		draw=sorange,
		line width=1.45pt
	}
}

\title{Quasipolynomial density bounds for $K$-point configurations in $\mathbb{Z}^d$}
\author{
	Andrew Lott \and
	Ákos Magyar \and
	Nagendar Reddy Ponagandla 
}

\date{\today}

\begin{document}
	
	\maketitle
	
	\begin{abstract}
		Let $d,K,N\in \mathbb{N}$ with $K\geq 3$ and $d\geq 4K+4$. Let
		$\Delta\subset \Z^d$ be the vertex set of a
		nondegenerate $(K-1)$-simplex, and let $A\subseteq[N]^d$ contain no nontrivial 
		similar copy of $\Delta$. We prove that
		\[
		\abs{A}\ll_{\Delta,d}
		N^d\exp\!\left(-c_{\Delta,d}\sqrt{\log N}\right)
		\]   
		improving upon a polylogarithmic bound due to Magyar. 
		We perform a density increment argument using
		the circle method, and we introduce a ``cut operator'' method to decouple the weighted exponential sum over the system of quadratic forms describing the simplex. Our proof combines ideas from graph theory, functional analysis, and the geometry of numbers.  In the process, we apply Finner's fractional form of H\"older's inequality, the analytic large sieve, and Kim's mean value formula for primitive lattice flags. 
	\end{abstract}
	\setcounter{tocdepth}{1}
	\tableofcontents
	
	\section{Introduction}
	\label{sec:introduction}
	
	A basic problem in additive combinatorics is to determine which finite patterns must occur in dense subsets of integer lattices.  At the qualitative level, the multidimensional Szemerédi theorem of Furstenberg and Katznelson~\cite{FurstenbergKatznelson} gives a general answer: for every finite set $F\subseteq\Z^d$, every subset of $\Z^d$ of positive upper Banach density contains a homothetic copy $x+qF$, with $x\in\Z^d$ and $q\in\N$.  The quantitative bounds obtainable from this general theorem are, however, extremely weak; see \cite{GowersHypergraphRegularity,TaoHypergraphRemoval}.

	Here we study the corresponding quantitative problem for similarity classes of simplices.  Let
	\[
	\Delta=\{v_0,\ldots,v_{K-1}\}\subseteq\Z^d
	\]
	be the vertex set of a fixed nondegenerate $(K-1)$-simplex.  We say that
	$\De'=\{x_0,\ldots,x_{K-1}\}\subseteq\Z^d$ is a \emph{similar copy} of
	$\Delta$ if
	\begin{equation}\label{eq:intro-similarity}
		\abs{x_i-x_j}^2=t\abs{v_i-v_j}^2
		\qquad(0\leq i<j\leq K-1)
	\end{equation}
	for some $t>0$, or equivalently if $\De'$ is a translated and rotated image of a dilate of $\De$.  Every homothetic copy is similar, so the qualitative existence of such configurations follows from the preceding theorem; our concern is the quantitative dependence on the density.
	
	Our main result is the following.
	
	\begin{theorem}\label{thm:main}
		Let $K\geq3$, and let $\Delta\subseteq\Z^d$ be the vertex set of a fixed nondegenerate integral $(K-1)$-simplex.  If $d\geq4K+4$, then there are
		constants $C,c>0$, depending only on $\Delta$ and $d$, such that every set $A\subseteq[N]^d$ containing no nontrivial similar copy of $\Delta$ satisfies
		\[
		\abs A\leq C N^d\exp\!\left(-c\sqrt{\log N}\right).
		\]
	\end{theorem}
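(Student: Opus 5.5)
We will run a density increment argument on arithmetic progressions of sublattices. We work with a set $A\subseteq[N]^d$ of density $\delta$ containing no nontrivial similar copy of $\Delta$, or more generally with a set of density $\delta$ inside a box $x_0+q[N']^d$. Fix $v_0=0$ by translation. A similar copy with ratio $t=\lambda^2$ is a tuple $(x,x+y_1,\dots,x+y_{K-1})$ whose differences satisfy the system of quadratic equations
\[
y_i\cdot y_j=\lambda^2\, v_i\cdot v_j\qquad(1\le i\le j\le K-1).
\]
We count such configurations with the weight
\[
\Lambda(A)=\sum_{x}\sum_{\lambda}\sum_{y}\1_A(x)\prod_i \1_A(x+y_i)\,\1[\text{system holds}],
\]
summing over $\lambda\le \varepsilon N$ for a suitable small scale $\varepsilon$, so that degenerate copies contribute negligibly. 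The goal is a dichotomy. Either $\Lambda(A)\ge c\,\delta^{K}\cdot(\text{expected count})$, which gives a nontrivial copy. Or there is a sublattice progression $x_1+q'[N'']^d$ with $q'\le \exp(C\log(1/\delta))$, i.e.\ $q'$ polynomial in $1/\delta$, and $N''\ge N/\exp(C\log(1/\delta))$, on which $A$ has density at least $\delta(1+c)$, or at least $\delta+c\delta^{C}$.

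First, we encode the system of $\binom{K}{2}$ quadratic equations by exponential sums over a symmetric matrix phase $\Theta\in\T^{(K-1)\times(K-1)}$. We apply the circle method to the counting function of the $(K-1)$-tuple $(y_1,\dots,y_{K-1})\in\Z^{d(K-1)}$ representing the Gram matrix $\lambda^2G$, where $G=(v_i\cdot v_j)$. On the major arcs, with $\Theta$ near $a/q$ and $q$ small, we use the standard Siegel mass formula. The hypothesis $d\ge 4K+4$ comfortably exceeds the threshold $d>2(K-1)+\dots$ needed for absolute convergence of the singular series and singular integral for representing a rank-$(K-1)$ form. The local densities are bounded below because $G$ is positive definite and integral, and a main term emerges after smoothing $\1_A$ on scale $q$.

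Second, and this is the main obstacle, we must show that the minor arc contribution is controlled by a uniformity norm of $f=\1_A-\delta\1_{\text{box}}$. We must do this with only polynomial losses in $\delta$, rather than the exponential losses of earlier work. The difficulty is that the weighted sum couples all the $y_i$ through the matrix $\Theta$. We would decouple it by expanding $\Theta$ into its off-diagonal and diagonal parts and viewing the multilinear expression as a hypergraph-type form indexed by the complete graph on the $K$ vertices. A Finner-type fractional H\"older inequality then bounds the form by products of lower-dimensional norms, each of which involves a single pair $(i,j)$. Each pairwise piece is an operator, the cut operator, acting on functions on $\Z^d$ by a kernel $e(\theta\, x\cdot y)$ truncated to the box. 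Its norm summed over $\theta$ in a minor arc can be controlled by the large sieve: for well-spaced frequencies the operator norms sum to $O(N^{d}(1+\text{spacing}^{-1}))$. On minor arcs this yields a gain of a power of $q^{-1}$ or of $N$. What remains is the rational parts of $\Theta$ with moderate $q$. These require summing over the lattices $\{y:\ y\cdot a\equiv 0 \bmod q\}$, and we would average over primitive lattice flags using Kim's mean value formula. This shows that only $\delta^{-O(1)}$ moduli $q$ matter, and that the corresponding Fourier concentration of $f$ forces an $L^2$ increment on a sublattice progression of modulus $q'\le\delta^{-C}$.

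Finally, we iterate. Each step multiplies $\delta$ by $(1+c)$, or adds $c\delta^{C}$ to it, and shrinks the scale by a factor $\delta^{-C}$. The number of steps is $O(\log(1/\delta))$ in the multiplicative scenario, and in the worst case the total scale loss is $\exp(C\log^2(1/\delta))$. The process must stop while $N\exp(-C\log^2(1/\delta))\ge 1$ still holds, because the density cannot exceed $1$. This forces $\log N\le C\log^2(1/\delta)$, that is $\delta\le\exp(-c\sqrt{\log N})$, which is Theorem~\ref{thm:main}. The crucial point to secure is that each increment step costs only a factor polynomial in $1/\delta$ in both modulus and scale. This is exactly what the cut-operator decoupling plus large sieve is designed to give.
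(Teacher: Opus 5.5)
\textbf{There is a genuine gap.} Your outline has the right global shape: circle method on the $\binom K2$ equations, a Finner-type decoupling, the large sieve on minor arcs, an increment on a $q$-spaced box with $q\le\delta^{-C}$, and $O(\log(1/\delta))$ multiplicative steps. What is missing is the mean-value estimate, which is exactly where $d\ge4K+4$ is earned.

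\textbf{Single pairs lose too much.} Decoupling into single pairs $(i,j)$ cannot reach this range. Finner's inequality needs nonnegative functions, so it cannot split the phases $e(\beta_{ij}\abs{z_i-z_j}^2)$ in physical space. Oscillation is kept only by Cauchy--Schwarz across a cut $S\mid T$, which gives $\abs{\mathcal S_\beta}\le N^{Kd/2}m_N^{S,T}(-2\beta_{S,T})^d\prod_i\norm{F_i}_2$. If you then keep only one crossing pair, any bound you get is in terms of single-edge norms $m_N(2\beta_e)$. But $m_N\ge N^{-1/2}$ pointwise, since an $N\times N$ matrix with Hilbert--Schmidt norm $N$ has operator norm at least $\sqrt N$. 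Hence $\int_{\T^L}\min_e m_N(2\beta_e)^d\,d\beta\ge N^{-d/2}$. This matches the required $N^{-2L}$ only if $d\ge4L=2K(K-1)$, which exceeds $4K+4$ for every $K\ge4$.

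\textbf{What the paper does instead.} It keeps all $\lfloor K^2/4\rfloor$ crossing frequencies of a balanced cut in one operator. It proves $\int m_R^{S,T}(\Theta)^{2K+4}\,d\Theta\ll R^{-2\abs S\abs T}$ (Lemma~\ref{lem:rectangular-moment}), a moment of order linear in $K$ that still saves $R^{-2}$ per frequency. Finner is then applied in the frequency variables $\beta$, using the fractional cover of $\mathcal E$ by balanced cuts. This gives $R^{-2L}$ once $\tau\ge K+2$, which is precisely $d\ge4K+4$.

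\textbf{Nothing in your plan supplies this estimate.} The large sieve bounds $\sum_u\abs{\sum_s h(s)e(\theta us)}^2$ for a single $\theta$. It therefore gives only the pointwise minor-arc bound $m_R(\xi)^2\ll Q^{-1}$ (Lemma~\ref{lem:scalar-minor}). It does not control sums or integrals of operator norms over $\theta$, since the norm is a supremum over test functions. Without a mean value of size $R^{-2L}$ you cannot show the minor arcs are smaller than the main term.

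\textbf{Kim's formula is in the wrong place.} In the paper it is the input to the moment bound. The trace of $(\mathcal K\mathcal K^*)^{K+2}$, integrated over $\Theta$, counts integer matrices with $UV^{\mathsf T}=0$. These are organised by primitive flags $\Gamma_U\subseteq\Gamma_V^{\perp_{\Z}}\subseteq\Z^{K+1}$ (Lemma~\ref{lem:matrix-orthogonality}). The rational part needs no lattice averaging. There the cut norm is $\abs{\operatorname{im}(2a_{S,T})}^{-d/2}$, and an elementary pivot count mod $p^m$ plus Finner gives $\sum_{(a,q)=1}\mathfrak s_q(a)\ll q^{-1-\eta}$.

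\textbf{The increment must be multiplicative.} Drop the additive alternative $\delta+c\delta^C$: it needs $\delta^{-C}$ iterations and yields only a polylogarithmic bound. In the paper, the major-arc bound in terms of $\prod_i\norm{\mu_q*F_i}_{\ell^K}$ gives $\sum_x\abs{\mu_q*g(x)}^K\gg\alpha^KN^d$ at the correct scale. Then $\mu_q*g\ge-\alpha$, together with mean zero, produces a point with $(\mu_q*\1_A)(y)>(1+\kappa)\alpha$.
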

	
	In the common range $d\geq4K+4$, this improves an earlier result of the second author~\cite{MagyarKPoint}, which states that, when \(d\geq 2K+3\), every set \(A\subseteq[N]^d\) containing no nontrivial similar copy of \(\De\) satisfies
	\[|A|\leq C_{\De,d}N^d(\log N)^{-2/(11K)}.\]
	To the best of our knowledge, this is the only previous quantitative result formulated for similar copies of an arbitrary nondegenerate integral simplex. 

By Chebyshev's bound, theorem~\ref{thm:main} immediately gives the following quantitative
relative result in the prime lattice; compare this to \cite{CookMagyarTitichetrakun,FoxZhaoPrimeLattice,TaoZieglerPrimeLattice}. Let $\PP$ denote the set of primes.

\begin{cor}\label{cor:primes}
    Let $K\geq3$, and let $\Delta\subseteq\Z^d$ be the vertex set
    of a fixed nondegenerate integral $(K-1)$-simplex.
    If $d\geq4K+4$, then there are constants $C,c>0$,
    depending only on $\Delta$ and $d$, such that every set
    $A\subseteq(\PP\cap[N])^d$ containing no nontrivial similar
    copy of $\Delta$ satisfies
    \[
        \abs A\leq C\abs{\PP\cap[N]}^d
        \exp\!\left(-c\sqrt{\log N}\right).
    \]
\end{cor}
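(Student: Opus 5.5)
The corollary follows directly from Theorem~\ref{thm:main} together with Chebyshev's lower bound $\abs{\PP\cap[N]}\gg N/\log N$. No structure of the primes is used. Since $(\PP\cap[N])^d\subseteq[N]^d$, any set $A\subseteq(\PP\cap[N])^d$ with no nontrivial similar copy of $\Delta$ is in particular a subset of $[N]^d$ with no such copy. Theorem~\ref{thm:main} then gives
\[
\abs A\leq C_0N^d\exp\!\left(-c_0\sqrt{\log N}\right),
\]
where $C_0,c_0>0$ depend only on $\Delta$ and $d$.

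Next we compare $N^d$ with $\abs{\PP\cap[N]}^d$. Chebyshev's bound gives $\abs{\PP\cap[N]}\geq c_1N/\log N$ for all $N\geq2$, with an absolute constant $c_1>0$. Hence
\[
N^d\leq c_1^{-d}(\log N)^d\abs{\PP\cap[N]}^d .
\]
Substituting this into the previous bound,
\[
\abs A\leq C_0c_1^{-d}\abs{\PP\cap[N]}^d\exp\!\left(d\log\log N-c_0\sqrt{\log N}\right).
\]

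It remains to absorb the logarithmic loss. Because $d\log\log N=o(\sqrt{\log N})$, there is $N_0=N_0(\Delta,d)$ such that $d\log\log N\leq\tfrac{c_0}{2}\sqrt{\log N}$ for all $N\geq N_0$. For these $N$ the claim holds with $c=c_0/2$ and $C=C_0c_1^{-d}$. For $2\leq N<N_0$ we use the trivial bound $\abs A\leq\abs{\PP\cap[N]}^d$. Since $\exp(-c\sqrt{\log N})$ is bounded below by a positive constant depending only on $\Delta,d$ in this range, enlarging $C$ covers these $N$. For $N=1$ the set $\PP\cap[N]$ is empty, so $A=\emptyset$ and there is nothing to prove.

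The only point needing care is that the $(\log N)^d$ loss from passing to the primes is subexponential in $\sqrt{\log N}$. This is exactly why the quasipolynomial saving in Theorem~\ref{thm:main} is essential: the polylogarithmic bound of \cite{MagyarKPoint} would be swamped by this loss, whereas here it is harmlessly absorbed by halving the constant $c$.
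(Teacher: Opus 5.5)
Your proposal is correct and is exactly the paper's argument: the paper deduces the corollary in one line from Theorem~\ref{thm:main} and Chebyshev's bound $\abs{\PP\cap[N]}\gg N/\log N$. The resulting $(\log N)^d$ loss is absorbed into $\exp(-c\sqrt{\log N})$ by shrinking $c$, just as you do, and you also handle the small-$N$ range correctly.
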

	
	An equivalent form of our main result is that a set $A\subs [N]^d$ of density $\alpha$ must contain a similar copy once
	\begin{equation}\label{eq:intro-threshold}
		N\geq
		\exp\!\left(C_{\De,d}\,
		\bigl(1+\log(1/\alpha)\bigr)^2\right).
	\end{equation}
	
	Theorem~\ref{thm:main} also directly implies the following quantitative result in the Euclidean setting.
	
	\begin{cor}\label{cor:continuous}
		Let $K\geq3$, $d\geq4K+4$, and let $\Delta\subseteq\Z^d$ be as above.
		For every measurable $E\subseteq[0,1]^d$ with $\abs E\geq\alpha>0$, there
		are $x\in\R^d$, $U\in\mathrm O(d)$, and
		\[
		\lambda\geq
		\exp\!\left(-C_{\Delta,d}
		\bigl(1+\log(1/\alpha)\bigr)^2\right)
		\]
		such that $x+\lambda U\Delta\subseteq E$.
	\end{cor}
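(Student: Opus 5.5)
We deduce Corollary~\ref{cor:continuous} from Theorem~\ref{thm:main}, in its equivalent threshold form \eqref{eq:intro-threshold}, by discretization. Write $\Delta=\{v_0,\dots,v_{K-1}\}$ and let $D=\max_{i,j}\abs{v_i-v_j}$. Fix a large integer $N$ to be chosen in terms of $\alpha$. For each $y\in[0,1/N)^d$ consider the shifted grid set
\[
A_y=\{n\in[N]^d:\ y+n/N\in E\}.
\]
Integrating over $y$ gives
\[
\int_{[0,1/N)^d}\abs{A_y}\,dy=\abs{E}\ge\alpha .
\]
Hence some $y$ has $\abs{A_y}\ge\alpha N^d$, up to harmless boundary indexing if $[N]=\{1,\dots,N\}$: one may use $\{0,\dots,N-1\}$ instead, or translate.

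Choose
\[
N=\left\lceil \exp\!\left(C_{\De,d}\bigl(1+\log(1/\alpha)\bigr)^2\right)\right\rceil ,
\]
with $C_{\De,d}$ the constant in \eqref{eq:intro-threshold}. Theorem~\ref{thm:main} then produces a nontrivial similar copy $\{x_0,\dots,x_{K-1}\}\subseteq A_y$ of $\Delta$. Thus $\abs{x_i-x_j}^2=t\abs{v_i-v_j}^2$ with $t>0$.

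Since $\Delta$ is nondegenerate, the two point sets span congruent configurations after scaling by $\sqrt t$. Congruent finite point configurations in $\R^d$ differ by a rigid motion, extendable to all of $\R^d$. So there exist $x'\in\R^d$ and $U\in\mathrm O(d)$ with $x_i=x'+\sqrt t\,Uv_i$ for every $i$.

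Rescaling gives
\[
y+x_i/N=\Bigl(y+\tfrac{x'}{N}\Bigr)+\tfrac{\sqrt t}{N}Uv_i\in E .
\]
So the corollary holds with $x=y+x'/N$ and $\lambda=\sqrt t/N$.

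For the lower bound on $\lambda$, nontriviality means the $x_i$ are distinct integer points. Hence some $\abs{x_i-x_j}\ge1$, so
\[
\sqrt t\ge 1/D ,
\]
and therefore
\[
\lambda\ge \frac{1}{DN}\ge \exp\!\left(-C'_{\De,d}\bigl(1+\log(1/\alpha)\bigr)^2\right),
\]
after enlarging the constant to absorb $D$, the ceiling, and the factor $2$.

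The only points needing care are these:
\begin{enumerate}
\item The threshold \eqref{eq:intro-threshold} must really follow from Theorem~\ref{thm:main}. This holds because $\abs A\ge\alpha N^d$ together with $\alpha> C\exp(-c\sqrt{\log N})$ contradicts the theorem once $\sqrt{\log N}\ge c^{-1}\log(C/\alpha)$. Also, small $N$, i.e.\ $\alpha$ near $1$, is covered by the $1+{}$ term.
\item The extension of the isometry must yield a genuine orthogonal $U$. This is a standard fact, and I expect no real obstacle here.
\end{enumerate}
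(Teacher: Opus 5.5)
Your proposal is correct and follows essentially the same route as the paper: average over grid shifts to find a discretized set of density at least $\alpha$, apply Theorem~\ref{thm:main} at a scale $N\approx\exp\bigl(C(1+\log(1/\alpha))^2\bigr)$, bound the similarity ratio from below using that distinct integer points are at distance at least one, and rescale back into $E$. The paper settles the indexing issue exactly as you suggest, using $\{0,\ldots,N-1\}^d$ and translating by $(1,\ldots,1)$.
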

	
	A short proof, based on averaging Theorem~\ref{thm:main} over translates of a sufficiently fine lattice, is given in Appendix~\ref{app:continuous}.  For comparison, a polylogarithmic bound follows from the proof of Bourgain's simplex theorem in dimensions $d\geq K$~\cite{BourgainEuclidean,LyallMagyarWeakRegularity}.  The cut-operator method also has a direct and simpler implementation in the Euclidean setting, which should permit a lower dimension threshold; we do not pursue this direction here.

	Beyond~\cite{MagyarKPoint}, quantitative results in the literature concern special configurations or indirect parametrizations by nonsingular matrix progressions.  A notable example is the corners configuration, consisting of axis-parallel isosceles right triangles.  Building on ideas originating in the work of Kelley and Meka~\cite{KelleyMeka}, Jaber, Liu, Lovett, Ostuni and Sawhney~\cite{JaberEtAlCorners} recently proved that every corner-free set $A\subseteq[N]^2$ satisfies
	\[
		|A|\leq N^2\exp\!\bigl(-c(\log N)^{1/600}\bigr).
	\]
	For an integral planar triangle, complex multiplication gives a parametrization of its similar copies in the form $x,\ x+T_1y,\ x+T_2y$, where $T_1,T_2$, and $T_1-T_2$ are nonsingular rational matrices.  Thus the problem becomes a matrix-valued version of Roth's theorem. Pilatte \cite{PilatteRothGeneralized} gives a density bound of the form $(\log N)^{-1-c}$, for some absolute constant $c>0$, for sets avoiding these configurations. Quaternionic constructions, and more generally integral orthogonal pairwise anticommuting Hurwitz--Radon matrices~\cite{GeramitaPullman}, similarly give four-term parametrizations of tetrahedral shapes after passage to a suitable auxiliary lattice.  Prendiville's theorem~\cite{PrendivilleMatrix} then yields a density bound of the form $(\log\log N)^{-c}$ for these parametrizations.

	This Hurwitz--Radon construction extends to arbitrary nondegenerate simplices, and more generally to configurations of fixed affine rank, but it has an unavoidable dimensional cost.  The sharp Hurwitz--Radon theorem forces a $(K-1)$-simplex represented in this way to use matrices of order $\exp(\Omega(K))$; Geramita and Pullman~\cite{GeramitaPullman} show that the optimal families may be chosen integral.  Prendiville's work also suggests that longer nonsingular matrix progressions should be accessible through higher-order Fourier analysis.  Together with the recent quasipolynomial inverse theorem of Leng, Sah and Sawhney~\cite{LengSahSawhneyInverse,LengSahSawhneySzemeredi}, this suggests that one might hope for a density bound of the form
	\[
		\exp\!\bigl(- (\log\log N)^{c_K}\bigr)
	\]
	for each fixed nonsingular matrix progression.  Such a result is not presently available: a multidimensional localized density-increment argument would still have to be developed.  Even if carried out, this route would give a substantially weaker dependence on $N$ than Theorem~\ref{thm:main} and, through the Hurwitz--Radon parametrization, would require an exponentially large dimension.

\section{Acknowledgments and AI disclosure}
This project was initiated while the authors were supported by the
HUN-REN Alfr\'ed R\'enyi Institute of Mathematics (Erd\H{o}s Center)
from January to June 2026. The authors thank the Institute for its
hospitality.

The second author was supported by Simons Foundation grant
MPS-TSM-854813 and grant NFKIH Excellence 15421 from the Research
Development and Innovation Office of Hungary.

Large language models were used as a sounding board during the brainstorming phase of the project. They were also used for literature search and copyediting. However, the final argument is the authors' own, and the manuscript was entirely written and verified by the authors, who take full responsiblity for the contents of the proof. 

\section{Outline of the proof}

\subsection{Setup}

Let $F_0,\ldots,F_{K-1}:[N]^d\to[-1,1]$, extended by zero to $\Z^d$, and $L:=\binom K2$, and let $w=\1_{[1,2)}$. Fix $R=\lfloor\si N\rfloor$ for some sufficiently small constant $0<\si<1$ depending only on $\Delta$, assume $N$ is sufficiently large depending on $\Delta$ and $d$, and write $w_R(n)=R^{-2}w(n/R^2)$. We count weighted solutions to equation~\eqref{eq:intro-similarity} for $t=n\in\N$, $n\asymp R^2$, via the multilinear expression
\begin{equation}\label{counting:intro}
	\Lambda_N(F_0,\ldots,F_{K-1})=
	\sum_n w_R(n)
	\sum_{\substack{z_0,\ldots,z_{K-1}\in[N]^d\\
		|z_i-z_j|^2=n|v_i-v_j|^2\ (i<j)}}
	\prod_{i=0}^{K-1}F_i(z_i).
\end{equation}
Kitaoka's estimate \cite{Kitaoka} supplies
\begin{equation}\label{mainterm-intro}
	\Lambda_N(\1_{[N]^d},\ldots,\1_{[N]^d})
	\gtrsim N^d R^{(K-1)d-2L}.
\end{equation}
Here $(K-1)d$ is the number of relative variables, while
$2L=2\binom K2$ is the contribution of the $\binom K2$ quadratic equations.

Suppose that $A\subset[N]^d$ has density $\alpha>0$ and contains no nontrivial similar copy of $\De$. Then $\Lambda_N(\1_A,\ldots,\1_A)=0$. Thus, writing $\1_A=\alpha\1_{[N]^d}+g$, the pigeonhole principle yields, for some $F_0,\ldots,F_{K-1}$,
\begin{equation}\label{eq:intro-large-mixed-term}
	|\Lambda_N(F_0,\ldots,F_{K-1})|
	\gtrsim \alpha^sN^dR^{(K-1)d-2L},
\end{equation}
with exactly $s>0$ of the $F_i$ equal to $g$ and the remaining $F_i$ equal to
$\1_{[N]^d}$. The rest of the proof is to turn this lower bound into a density increment.

As is customary in the circle method, one expresses the quadratic relations via the integrals
\[
	\1_{|z_i-z_j|^2=n|v_i-v_j|^2}
	=\int_\T e(\be_{ij}|z_i-z_j|^2)
	e(-n\be_{ij}|v_i-v_j|^2)\,d\be_{ij},
	\qquad e(t):=\exp(2\pi i t).
\]
This gives
\begin{equation}\label{Fouriercount:intro}
	\Lambda_N(F_0,\ldots,F_{K-1})
	=\int_{\T^L}
	\widehat w_R\big(\sum_{i<j}|v_i-v_j|^2\beta_{ij}\big)\,
	\mathcal S_\beta(F_0,\ldots,F_{K-1})\,d\beta,
	\qquad \|\widehat w_R\|_\infty\lesssim1,
\end{equation}
where $\widehat w_R(t):=\sum_n w_R(n)e(-nt)$ and
\begin{equation}\label{eq:intro-Sbeta}
	\mathcal S_\beta(F_0,\ldots,F_{K-1})
	=\sum_{z_0,\ldots,z_{K-1}\in[N]^d}
	e\!\left(\sum_{i<j}\beta_{ij}|z_i-z_j|^2\right)
	\prod_{i=0}^{K-1}F_i(z_i).
\end{equation}

\subsection{The cut-operator reduction}

Write $\mathcal E$ for the edge set of the complete graph on $K$ vertices, and note that $L=\abs{\mathcal E}=\binom K2$. We use the convention $\beta_{ji}=\beta_{ij}$ for $i<j$.

Fix a balanced cut $S\mid T$, that is, a partition of the vertex set into two classes whose sizes differ by at most one. The phase $\mathcal Q_\be(z)$ in
\eqref{eq:intro-Sbeta} decomposes as
\[
	\mathcal Q_\beta(z)
	=\mathcal Q_\beta^S(z_S)+\mathcal Q_\beta^T(z_T)
	-2\sum_{i\in S,\,j\in T}\beta_{ij}z_i\cdot z_j,
\]
where $z_S=(z_i)_{i\in S}$ and $z_T=(z_j)_{j\in T}$.
The first two terms depend on only one side of the cut. For fixed $\Theta\in\mathbb{T}^{|S||T|}$, the crossing term naturally leads to the operator
\[
	\mathcal K_{R,d}^{S,T}(\Theta):
	\ell^2(([R]^d)^T)\to\ell^2(([R]^d)^S)
\]
defined by
\[
	\bigl(\mathcal K_{R,d}^{S,T}(\Theta)h\bigr)(z_S)
	:=
	\sum_{z_T\in([R]^d)^T}
	e\!\left(\sum_{i\in S,\,j\in T}\Theta_{ij}z_i\cdot z_j\right)h(z_T).
\]
We call $\mathcal K_{R,d}^{S,T}(\Theta)$ a \emph{cut operator}. Because the dot product separates coordinatewise, $\mathcal K_{R,d}^{S,T}(\Theta)$ is the $d$-fold tensor product of the operator $\mathcal K_R^{S,T}(\Theta):=\mathcal K_{R,1}^{S,T}(\Theta)$. For ease of notation, we work with the normalized operator norm
\begin{equation}\label{eq:intro-cut-norm}
	m_R^{S,T}(\Theta)
	:=R^{-K/2}\norm{\mathcal K_R^{S,T}(\Theta)}_{2\to2},
	\qquad (0\leq m_R^{S,T}(\Theta)\leq1).
\end{equation}
Let $\beta_{S,T}=(\be_{ij})_{i\in S,j\in T}$ denote the family of crossing frequencies of the cut $S\mid T$. A key observation is to write
\[
	\mathcal S_\beta(F_0,\ldots,F_{K-1})
	=
	\big\langle
	U_S\mathcal K_{N,d}^{S,T}(-2\beta_{S,T})U_TH_T,
	H_S
	\big\rangle,
\]
where
\[
	H_S(z_S):=\prod_{i\in S}F_i(z_i),
	\qquad
	H_T(z_T):=\prod_{j\in T}F_j(z_j),
\]
and $U_S,U_T$ are the unitary multiplication operators
\[
	(U_Sh)(z_S):=e(\mathcal Q_\beta^S(z_S))h(z_S),
	\qquad
	(U_Th)(z_T):=e(\mathcal Q_\beta^T(z_T))h(z_T).
\]
Since
$\|H_S\|_2=\prod_{i\in S}\|F_i\|_2$ and
$\|H_T\|_2=\prod_{j\in T}\|F_j\|_2$,
the Cauchy--Schwarz inequality, unitarity, and the
multiplicativity of the $\ell^2\to\ell^2$ operator norm
with respect to tensor products imply that
\begin{equation}\label{eq:intro-one-cut-reduction}
	\abs{\mathcal S_\beta(F_0,\ldots,F_{K-1})}
	\leq
	N^{Kd/2}\,m_N^{S,T}(-2\beta_{S,T})^d
	\prod_{i=0}^{K-1}\norm{F_i}_2.
\end{equation}
The problem of estimating the exponential sum $\mathcal S_\be$ has thus been reduced to understanding the normalized one-dimensional cut-operator norms.  Variations of this reduction can be found in the work of Greenleaf-Iosevich-Taylor~\cite{GreenleafIosevichTaylor,GreenleafIosevichTaylorMicrolocal}, Liu \cite{LiuPrimeQuadrics}, Cook-Magyar \cite{CookMagyarDiophantine}, and Green \cite{GreenQuadraticForms}.

We consider all balanced cuts of the vertex set $\Delta$. 
Let $\mathscr B_K$ be the set of balanced cuts; we introduce the notation
\[
	w_K:=\frac{L}{|S||T||\mathscr B_K|},
	\qquad
	\tau:=\frac{d|S||T|}{2L}.
\]
By symmetry it is easy to see that
\begin{equation}\label{eq:intro-fractional-cover}
	w_K\#\{S\mid T\in\mathscr B_K:e\text{ crosses }S\mid T\}=1
	\qquad(e\in\mathcal E).
\end{equation}
Since $2\tau w_K=d/|\mathscr B_K|$, taking the geometric mean of
\eqref{eq:intro-one-cut-reduction} over $S\mid T\in\mathscr B_K$ yields
\begin{equation}\label{eq:intro-geometric-cut-bound}
	\abs{\mathcal S_\beta(F_0,\ldots,F_{K-1})}
	\leq
	N^{Kd/2}
	\prod_{S\mid T\in\mathscr B_K}
	m_N^{S,T}(-2\beta_{S,T})^{2\tau w_K}
	\prod_{i=0}^{K-1}\norm{F_i}_2.
\end{equation}

\subsection{The mean-value estimate}

The first principal estimate is a high-moment bound for the cut operator norm.

\begin{lemma}\label{lem:rectangular-moment}
	Let $S\mid T$ be a balanced cut. For every positive integer $R$,
	\begin{equation}\label{eq:rectangular-moment}
		\int_{\T^{\abs S\abs T}}
		m_R^{S,T}(\Theta)^{2K+4}\,d\Theta
		\ll_K R^{-2\abs S\abs T}.
	\end{equation}
\end{lemma}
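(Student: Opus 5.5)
The plan is to bound the operator norm by a Schatten norm and then reduce the moment to counting integer solutions of a bilinear system. Set $p:=K+2$, so that $2K+4=2p$. Let $\mathcal K=\mathcal K_R^{S,T}(\Theta)$. Positivity of $\mathcal K\mathcal K^*$ gives
\[
\|\mathcal K\|_{2\to2}^{2p}\le\operatorname{tr}\bigl((\mathcal K\mathcal K^*)^p\bigr).
\]
We apply this with the roles of $S$ and $T$ chosen so that the side we sum over first is the one with $|S|\le|T|$; the statement is symmetric under $\Theta\mapsto\Theta^{t}$. Expanding the trace gives a sum over $x^1,\dots,x^p\in[R]^S$ and $y^1,\dots,y^p\in[R]^T$, with indices taken cyclically ($x^{p+1}=x^1$), of
\[
e\Bigl(\sum_{i\in S,\,j\in T}\Theta_{ij}\sum_{k=1}^p y^k_j\,(x^k_i-x^{k+1}_i)\Bigr).
\]
Integrating over $\T^{|S||T|}$ then gives
\[
\int m_R^{S,T}(\Theta)^{2p}\,d\Theta\le R^{-pK}\,\mathcal N,
\]
where $\mathcal N$ counts the tuples satisfying the $|S||T|$ equations
\[
\sum_{k=1}^p y^k_j\,D^k_i=0,\qquad D^k_i:=x^k_i-x^{k+1}_i.
\]
The target therefore becomes $\mathcal N\ll_K R^{pK-2|S||T|}$. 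This is the square-root-cancellation count: $pK$ variables of size $R$ and $|S||T|$ equations of size $R^2$.

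Next we count. For fixed $x=(x^k_i)$, let $D$ be the $|S|\times p$ integer matrix with entries $D^k_i$; its rows lie in the hyperplane $\{\sum_k u_k=0\}$, and it has some rank $r\le\min(|S|,p-1)$. The constraint on $y$ decouples over $j\in T$: each column vector $y_j=(y^1_j,\dots,y^p_j)\in[R]^p$ must lie in the lattice $\Lambda_D:=D^{\perp}\cap\Z^p$, which has rank $p-r$. By the standard lattice-point bound in a box (Davenport/successive minima), the number of admissible $y_j$ is
\[
\ll \sum_{\Lambda'\subseteq\Lambda_D\ \text{primitive}} \frac{R^{\operatorname{rank}\Lambda'}}{\det\Lambda'}.
\]
The main term is $R^{p-r}/\det\Lambda_D$, and $\det\Lambda_D$ equals the covolume of the primitive saturation of the row lattice of $D$. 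So $\mathcal N$ is bounded by a sum over $x$ of the $|T|$-th power of such expressions.

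We then sum over $x$ by stratifying according to the primitive lattice flag generated by the rows of $D$, equivalently according to the row lattice and its determinant. Counting matrices $D$ whose rows lie in a given primitive lattice of rank $r$ and determinant about $\Delta$, with entries of size $\le R$, gives roughly $R^{r|S|}\Delta^{-|S|}$ choices times the number of such lattices. That number is controlled by the mean value formula for primitive lattices/flags (Schmidt, or the Kim formula cited in the abstract). The generic stratum $r=|S|$ then contributes about
\[
R^{|S|}\cdot R^{(p-1)|S|}\cdot\bigl(R^{p-|S|}R^{-|S|}\bigr)^{|T|}=R^{pK-2|S||T|},
\]
with the negative powers of $\det$ exactly offsetting the extra freedom. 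The most degenerate stratum, $x^1=\dots=x^p$ (so $D=0$), contributes $R^{|S|+p|T|}$. This is admissible precisely when $p\ge2|T|+1$, which holds since $|T|\le(K+1)/2$ and $p=K+2$; this is where the exponent $2K+4$ comes from. The intermediate ranks $0<r<|S|$ interpolate and should be checked to be admissible under the same inequality.

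The main obstacle is the middle step: summing the $|T|$-th powers of $1/\det\Lambda_D$, together with the lower-order lattice-point terms, uniformly over all ranks and over the possible sublattices $\Lambda'$, without logarithmic losses at the endpoints. My first attempt would be to dyadically decompose in the successive minima of $\Lambda_D$. Within each dyadic range, I would bound the number of $x$ producing a given flag of row lattices via the primitive-flag mean value formula. I would then verify that in every range the resulting exponent of $R$ is at most $pK-2|S||T|$, using $p=K+2$ and the balance of the cut.
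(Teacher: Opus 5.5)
\paragraph{There is a genuine gap: the matrix count is not proved.} Your trace bound, the orthogonality step, and the target $\mathcal N\ll R^{pK-2\abs S\abs T}$ coincide with the paper's. Your heuristics for the generic stratum and for $D=0$ are also correct. But that count is the entire content of the lemma, and the route you sketch for it does not go through as written.

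Bounding the number of admissible $y_j$ by $R^{p-r}/\det\Lambda_D$ requires every successive minimum of $\Lambda_D$ to be $O(R)$, and for $r\geq2$ this fails. Suppose the saturated row lattice of $D$ has determinant $\asymp R^{r}$ but lies inside a rank-$(r+1)$ lattice $M\subseteq\mathbf 1^{\perp}\cap\Z^p$ of bounded determinant. Then $\Lambda_D\supseteq M^{\perp_{\Z}}$, which has rank $p-r-1$ and bounded determinant. So each $y_j$ has $\gg R^{p-r-1}$ choices, not $\asymp R^{p-2r}$.

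Your patch does not repair this. A sum of $R^{\rank\Lambda'}/\det\Lambda'$ over all primitive $\Lambda'\subseteq\Lambda_D$ already diverges over rank-one sublattices. With the correct bound $\prod_i\bigl(1+R/\lambda_i(\Lambda_D)\bigr)$ you would still have to count $x$ by the full successive-minima profile of $\Lambda_D$, check the intermediate ranks, and rule out logarithmic losses. None of this is done, so the decisive step is missing.

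\paragraph{How the paper closes it.} The paper makes the count symmetric instead of fixing one side. First it removes the redundancy you noticed: rows of $D$ sum to zero, and the equations are invariant under $y_j\mapsto y_j+c_j\mathbf 1$. Subtracting $z_T^{(K+2)}$ from every $T$-variable leaves $R^K$ base points and integer matrices $U\in\Z^{\abs S\times n}$, $V\in\Z^{\abs T\times n}$ with $n=K+1$, entries in $[-R,R]$, and $UV^{\mathsf T}=0$. It then stratifies by the saturated row lattices $\Gamma_U,\Gamma_V$ of \emph{both} matrices, with $d_1=\rank U$ and $d_2=n-\rank V$.
\begin{itemize}
\item Each of $\Gamma_U,\Gamma_V$ is generated by rows of norm $O(R)$. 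Hence $\#\{U\}\ll(R^{d_1}/\det\Gamma_U)^{\abs S}$ and $\#\{V\}\ll(R^{n-d_2}/\det\Gamma_V)^{\abs T}$ hold with no lower-order terms.
\item Since $d_1\leq\abs S<n-\abs T\leq d_2$, orthogonality gives a proper primitive flag $\Gamma_U\subsetneq\Gamma_V^{\perp_{\Z}}$ in $\Z^n$. This flag determines the pair, and $\det\Gamma_V^{\perp_{\Z}}=\det\Gamma_V$.
\item Kim's two-step formula, transferred to $\Z^n$, bounds the number of flags with heights $H_1,H_2$ by $H_1^{d_2}H_2^{n-d_1}$.
\item The dyadic sums are geometric, because $d_2-\abs S$ and $n-d_1-\abs T$ are both at least $n-\abs S-\abs T=1$. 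This gives $R^{n^2-n(d_1+d_2)+2d_1d_2}$.
\item That exponent is maximized at $(d_1,d_2)=(\abs S,n-\abs T)$ precisely because $n\geq2\max(\abs S,\abs T)$. The total is $R^{K(K+1)-2\abs S\abs T}$ pairs, and the $R^K$ base points then give the lemma.
\end{itemize}

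The change of variables is not cosmetic. A flag count in $\Z^p$ that ignores that the row lattice of $D$ lies in $\mathbf 1^{\perp}$ loses a factor $R^{\abs S}$.
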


The power $R^{-2\abs S\abs T}$ is the natural saving associated with the $\abs S\abs T$ crossing frequencies. To prove the lemma, we observe first that
\[
	\|\mathcal K_R^{S,T}(\Theta)\|_{2\to2}^{2K+4}
	=
	\|(\mathcal K_R^{S,T}(\Theta)
	\mathcal K_R^{S,T}(\Theta)^*)^{K+2}\|_{2\to2}
\]
and that the latter norm is bounded by the trace of
$(\mathcal K_R^{S,T}(\Theta)\mathcal K_R^{S,T}(\Theta)^*)^{K+2}$.
Expanding the trace and integrating in $\Theta$ reduces the problem, up to at most $R^K$ choices of base points, to counting bounded integer matrices
\begin{equation}\label{eq:intro-UV}
	U\in\Z^{\abs S\times(K+1)},\qquad
	V\in\Z^{\abs T\times(K+1)},\qquad
	UV^{\mathsf T}=0.
\end{equation}

We follow a geometric route exploiting the special structure of this matrix equation. In particular, after treating $U=0$ and $V=0$ separately, the saturated row lattices $\Gamma_U$ and $\Gamma_V$ given by $U$ and $V$ determine the primitive $2$-step lattice flag
$\Gamma_U\subseteq\Gamma_V^{\perp_{\Z}}\subseteq\Z^{K+1}$, where $\Gamma_V^{\perp_{\Z}}$ is the integral orthogonal complement of $\Gamma_V$. Thus we reduce the count of solutions to the matrix equation to counting primitive $2$-step lattice flags subject to certain constraints. Indeed, organizing the count by its ranks and covolumes, and combining geometry of numbers with Kim's mean-value formula for primitive lattice flags~\cite{KimFlags}, gives the upper bound
\[
	\#\{(U,V):UV^{\mathsf T}=0,\ \norm U_\infty,\norm V_\infty\ll R\}
	\ll_K R^{K(K+1)-2\abs S\abs T}.
\]
Combining this with the $R^K$ choices of base points gives precisely the bound required in
Lemma~\ref{lem:rectangular-moment} after normalization by $R^{-K(K+2)}$.

Finner's fractional form of H\"older's inequality~\cite{Finner} turns the
one-cut estimates into the global estimate used in both the major and minor arc estimates.

\begin{cor}\label{cor:intro-fractional-mean-value}
	If $\tau\geq K+2$, then
	\begin{equation}\label{eq:intro-full-cut-moment}
		\int_{\T^L}
		\prod_{S\mid T\in\mathscr B_K}
		m_R^{S,T}(-2\theta_{S,T})^{2\tau w_K}\,d\theta
		\ll_{K,d}R^{-2L}.
	\end{equation}
\end{cor}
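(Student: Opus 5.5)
We will deduce the corollary from Lemma~\ref{lem:rectangular-moment} by Finner's inequality, applied on $\T^L$ to the family of functions $f_{S\mid T}(\theta):=m_R^{S,T}(-2\theta_{S,T})^{2\tau}$. Each $f_{S\mid T}$ depends only on the coordinates $\theta_e$ with $e$ crossing $S\mid T$, i.e.\ on the sub-torus $\T^{E_{S,T}}$ with $E_{S,T}\subseteq\mathcal E$ the crossing edges. By \eqref{eq:intro-fractional-cover}, the weights $p_{S\mid T}:=w_K$ form a fractional (exact) cover of $\mathcal E$: every edge lies in sets $E_{S,T}$ of total weight exactly $1$. Finner's inequality (the generalized Hölder/Loomis--Whitney inequality for product probability spaces) then gives
\[
\int_{\T^L}\prod_{S\mid T\in\mathscr B_K} f_{S\mid T}(\theta_{S,T})^{w_K}\,d\theta
\leq\prod_{S\mid T\in\mathscr B_K}\Bigl(\int_{\T^{\abs S\abs T}} f_{S\mid T}\Bigr)^{w_K}.
\]

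It remains to bound each factor. We first remove the factor $-2$: the map $\Theta\mapsto-2\Theta$ on $\T^{\abs S\abs T}$ is a surjective endomorphism of the torus and preserves Haar measure, so $\int m_R^{S,T}(-2\Theta)^{2\tau}d\Theta=\int m_R^{S,T}(\Theta)^{2\tau}d\Theta$. Next, since $0\le m_R^{S,T}\le1$ and $2\tau\geq 2K+4$ by hypothesis, we have $m_R^{2\tau}\le m_R^{2K+4}$ pointwise. Lemma~\ref{lem:rectangular-moment} then bounds each factor by $\ll_K R^{-2\abs S\abs T}$. (Here $\tau$ is the same for all balanced cuts, since $\abs S\abs T=\lfloor K/2\rfloor\lceil K/2\rceil$ is constant, so a single exponent is used throughout.)

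Multiplying, we get the bound $\ll_{K,d}\prod_{S\mid T}R^{-2w_K\abs S\abs T}=R^{-2w_K\sum_{S\mid T}\abs S\abs T}$. Double counting crossing pairs shows $\sum_{S\mid T}\abs S\abs T=\sum_{e\in\mathcal E}\#\{S\mid T: e\text{ crosses}\}$, and by \eqref{eq:intro-fractional-cover} the weighted sum is $w_K\sum_{S\mid T}\abs S\abs T=L$. Equivalently, this follows directly from the definition $w_K=L/(\abs S\abs T\abs{\mathscr B_K})$. The exponent is therefore $-2L$, which is \eqref{eq:intro-full-cut-moment}. The only point requiring care is checking that Finner's hypotheses hold, namely that the family of coordinate subsets $E_{S,T}$ with weights $w_K$ covers each coordinate with total weight at most $1$; this is exactly \eqref{eq:intro-fractional-cover}. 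Everything else is bookkeeping.
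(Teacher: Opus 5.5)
Your proof is correct and is essentially the paper's argument: apply Finner's inequality (Lemma~\ref{lem:fractional-holder}) with $f_{S\mid T}=m_R^{S,T}(-2\theta_{S,T})^{2\tau}$ and weights $w_K$, remove the factor $-2$ by Haar invariance, use $m_R^{S,T}\le 1$ with $2\tau\ge 2K+4$ to invoke Lemma~\ref{lem:rectangular-moment}, and collapse the exponents via $w_K\abs{\mathscr B_K}\lfloor K^2/4\rfloor=L$. Your remark that total weight at most $1$ suffices is valid on the probability space $\T^L$, although the paper states Finner's inequality with exact equality, which holds here anyway.
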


Indeed, the covering condition in Finner's inequality is exactly
\eqref{eq:intro-fractional-cover}. Since $m_R^{S,T}\leq1$ and
$2\tau\geq2K+4$, Lemma~\ref{lem:rectangular-moment} gives
\[
	\begin{aligned}
		\int_{\T^L}\prod_{S\mid T\in\mathscr B_K}
		m_R^{S,T}(-2\theta_{S,T})^{2\tau w_K}\,d\theta
		&\leq
		\prod_{S\mid T\in\mathscr B_K}
		\left(
		\int_{\T^{|S||T|}}m_R^{S,T}(\Theta)^{2\tau}\,d\Theta
		\right)^{w_K}\\
		&\ll
		\prod_{S\mid T\in\mathscr B_K}R^{-2|S||T|w_K}
		=R^{-2L}.
	\end{aligned}
\]

For $K=4$, the mechanism is especially transparent. There are six edge frequencies and three balanced $2\mid2$ cuts,
\[
	01\mid23,\qquad02\mid13,\qquad03\mid12.
\]
Each cut sees four edges, every edge crosses exactly two cuts, and $w_4=1/2$. Lemma~\ref{lem:rectangular-moment} is a twelfth-moment estimate, and Finner's inequality gives the bound
\[
	(R^{-8})^{1/2}(R^{-8})^{1/2}(R^{-8})^{1/2}
	=R^{-12}=R^{-2\binom42}.
\]

It is natural to ask whether the lattice-flag count can be replaced by a general circle-method or modular-form estimate. Because~\eqref{eq:intro-UV} is a singular system with low-rank pencils, direct application of the general circle-method estimates in~\cite{RydinMyerson,HochfilzerBihomogeneous} would require taking a trace moment of order proportional to $K^2$ and would ultimately lead to a dimension hypothesis $d\gg K^2$. Kitaoka's theory of Siegel modular forms treats related positive-definite representation problems~\cite{Kitaoka}, but we are unaware of a modular-form estimate that directly supplies the required upper bound for this singular, indefinite matrix equation. The lattice-flag argument exploits its bilinear structure and is what allows the dimension threshold to remain linear in $K$.

\subsection{The major and minor arc estimates}

\emph{The major arcs.}

Fix a suitable parameter $2\leq Q\leq R$. On a major arc, after passing to a common denominator, write $\beta=a/q+\theta$, where $q\leq2Q^L$, $(a,q)=1$, and $\norm{\theta}_\infty\leq Q/R^2$.
We may approximate $\mathcal S_{a/q+\theta}$, at the cost of a negligible error, by a major arc form, consisting of a complete-sum component and an archimedean component:
\[
	\mathcal M_{q,a,\theta}(G_0,\ldots,G_{K-1})
	=\E_{r\bmod q}
	\sum_{u\in(\Z^d)^K}
	e_q\bigl(\mathcal Q_a(r)\bigr)
	e\bigl(\mathcal Q_\theta(u)\bigr)
	\prod_{i=0}^{K-1}G_i(u_i,r_i),
\]
where each $G_i$ is obtained from the convolution of $F_i$ with the normalized counting measure $\mu_q$ on a $q$-spaced box, defined below. More precisely, $G_i(u,r)=(\mu_q*F_i)(y)$, where $y\equiv r\pmod q$ and $u-y\in\{0,\ldots,q-1\}^d$.

Define an operator
\[
	\mathcal K^{S,T}_{q,a,d}:
	\ell^2\!\left(((\Z/q\Z)^d)^T\right)
	\longrightarrow
	\ell^2\!\left(((\Z/q\Z)^d)^S\right)
\]
with normalized counting measure on both spaces, given by
\[
	\bigl(\mathcal K_{q,a,d}^{S,T}H\bigr)(r_S)
	:=
	\E_{r_T\bmod q}
	e_q\!\left(-2\sum_{i\in S,\,j\in T}
	a_{ij}\,r_i\cdot r_j\right)H(r_T).
\]
By a similar cut reduction, for each cut $S\mid T$, $\mathcal M_{q,a,\theta}$ is controlled by
$\mathcal K^{S,T}_{q,a,d}\otimes\mathcal K_{P,d}^{S,T}(-2\theta_{S,T})$,
where $P$ is a suitably chosen parameter proportional to $N$. In this way, $\mathcal M_{q,a,\theta}$ itself does not factor, but the cut operator is a tensor product and thus its operator norm does factor:
\[
	\|\mathcal K_{q,a,d}^{S,T}
	\otimes\mathcal K_{P,d}^{S,T}(-2\theta_{S,T})\|_{2\to2}
	=
	\|\mathcal K_{q,a,d}^{S,T}\|_{2\to2}
	\|\mathcal K_{P,d}^{S,T}(-2\theta_{S,T})\|_{2\to2}.
\]
Hence, the major arc analysis reduces to understanding the operator norms of
$\mathcal K^{S,T}_{q,a,d}$ and $\mathcal K_{P,d}^{S,T}(-2\theta_{S,T})$.

An elementary orthogonality argument gives
\begin{equation}\label{eq:intro-finite-norm}
	\norm{\mathcal K_{q,a,d}^{S,T}}_{2\to2}
	=\abs{\operatorname{im}(2a_{S\mid T})}^{-d/2},
\end{equation}
where the matrix $2a_{S\mid T}=(2a_{ij})_{i\in S,j\in T}$ is regarded as a homomorphism $(\Z/q\Z)^T\to(\Z/q\Z)^S$.
Corollary~\ref{cor:intro-fractional-mean-value} treats $\mathcal K_{P,d}^{S,T}$, and we treat $\mathcal K_{q,a,d}^{S,T}$ via row and column operations and an elementary counting argument.

\medskip
\noindent\emph{The minor arcs.}

The minor arcs require an additional pointwise estimate. For $\xi\in\T$, set
\[
	\mathcal K_R(\xi)=\bigl(e(\xi rs)\bigr)_{r,s\in[R]},
	\qquad
	m_R(\xi)=R^{-1}\norm{\mathcal K_R(\xi)}_{2\to2}.
\]
After isolating the variables attached to a crossing edge $e=(i,j)$, a simple application of Cauchy--Schwarz in the remaining variables gives
\begin{equation}\label{eq:intro-single-edge}
	m_R^{S,T}(\Theta)\leq m_R(\Theta_{ij})
	\qquad(i\in S,\ j\in T).
\end{equation}

\begin{lemma}\label{lem:scalar-minor}
	Let $2\leq Q\leq R$. Suppose that $\xi\in\T$ admits no coprime rational
	approximation $a/q$ satisfying
	\[
		1\leq q\leq Q,
		\qquad
		\norm{\xi-a/q}_{\T}\leq\frac{Q}{qR^2}.
	\]
	Then
	\begin{equation}\label{eq:intro-weyl-bound}
		m_R(\xi)^2\ll Q^{-1}.
	\end{equation}
\end{lemma}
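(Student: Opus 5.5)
We bound the operator norm through $\mathcal K_R(\xi)\mathcal K_R(\xi)^*$ and estimate the resulting geometric sums with the usual Weyl-type counting.

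\textbf{Reduction to geometric sums.} Write $\mathcal K=\mathcal K_R(\xi)$. Since $\norm{\mathcal K}_{2\to2}^2=\norm{\mathcal K\mathcal K^*}_{2\to2}$, it suffices to bound the Hermitian matrix
\[
(\mathcal K\mathcal K^*)_{r,r'}=\sum_{s\in[R]}e(\xi(r-r')s).
\]
By Schur's test, $\norm{\mathcal K\mathcal K^*}_{2\to2}$ is at most the maximal row sum of absolute values. Using the geometric sum bound $\bigl|\sum_{s\in[R]}e(\eta s)\bigr|\leq\min(R,\norm{\eta}_\T^{-1}/2)$, we get
\[
R^2m_R(\xi)^2\leq\max_{r\in[R]}\sum_{r'\in[R]}\min\bigl(R,\norm{\xi(r-r')}_\T^{-1}\bigr)
\leq 2\sum_{|h|<R}\min\bigl(R,\norm{\xi h}_\T^{-1}\bigr).
\]
The goal is therefore to show
\[
\sum_{|h|<R}\min\bigl(R,\norm{h\xi}_\T^{-1}\bigr)\ll R^2/Q.
\]

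\textbf{Dirichlet approximation and counting.} By Dirichlet's theorem with parameter $R^2/Q$, there are coprime $a,q$ with $1\leq q\leq R^2/Q$ and $\abs{\xi-a/q}\leq Q/(qR^2)$. The hypothesis on $\xi$ then forces $q>Q$.

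We use the standard estimate: if $\abs{\xi-a/q}\leq q^{-2}$, then
\[
\sum_{|h|\leq H}\min\bigl(X,\norm{h\xi}^{-1}\bigr)\ll\Bigl(\frac Hq+1\Bigr)(X+q\log q).
\]
Here $Q/(qR^2)\leq q^{-2}$ holds because $q\leq R^2/Q$. With $H=X=R$ this gives
\[
\ll\frac{R^2}{q}+R+R\log q+q\log q.
\]
The term $R^2/q$ is at most $R^2/Q$. However, the terms $q\log q$ (with $q$ up to $R^2/Q$) and $R\log q$ lose logarithms, giving $R^2\log R/Q$ rather than $R^2/Q$.

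\textbf{Main obstacle: removing the logarithm.} To avoid the logarithmic loss, I would redo the count directly, splitting $h$ into blocks of length $\min(q,R)$ and treating two cases.

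\emph{Case $q\leq R$.} In each block the points $h\xi$ are $\gg 1/q$-spaced, up to a perturbation of size $\leq R\cdot Q/(qR^2)\leq 1/q$. This perturbation bound uses $Q\leq R$; if it fails, first shrink the block length so the perturbation is at most $1/(2q)$, costing at most a constant factor. With the spacing in hand, each block contributes at most $O(1)$ values with $\norm{h\xi}\leq 1/R$, and these give $O(R)$. The remaining values contribute $\sum_{k\leq q}R/k$-type terms; bounding them by $\min(R,q/k)$ gives $\sum_k\min(R,q/k)\ll q\log(R/q+2)$-style bounds. Over $R/q$ blocks the total is about $R^2/q+R\log$.

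\emph{Case $q>R$.} The main term is $R\cdot\#\{h:\norm{h\xi}\leq1/R\}$. This set has $O(1)$ elements, since differences would give $\norm{h\xi}\leq 2/R$ with $|h|<2R<2q$, forcing a near-multiple of $q$.

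Logarithmic slack of this kind is harmless downstream, since one can simply replace $Q$ by $Q/\log R$ in the major/minor arc dissection. If the stated bound $Q^{-1}$ is truly required without logs, I would exploit the $R\times R$ structure instead: $\mathcal K$ restricted to arithmetic progressions mod $q$ is near rank-one. I expect this logarithm removal, rather than the Schur-test reduction, to be the only delicate step.
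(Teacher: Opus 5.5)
\textbf{The gap.} Your proposal does not prove the lemma, and the logarithm cannot be removed inside your framework. The inequality you reduce to,
$\sum_{\abs h<R}\min(R,\norm{h\xi}_{\T}^{-1})\ll R^2/Q$, is false in general. Take $\xi=2Q/R^2$ with $4\leq Q\leq R/8$. This $\xi$ satisfies the hypothesis: the approximation $0/1$ fails since $\norm{\xi}_{\T}=2Q/R^2>Q/R^2$, and any other $a'/q'$ with $q'\leq Q$ lies at distance at least $1/q'-2Q/R^2>Q/(q'R^2)$. But $\norm{h\xi}_{\T}=2Q\abs h/R^2$ for $\abs h<R$, so your sum is $\asymp (R^2/Q)\log Q$.

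This is not caused by the majorant $\min(R,\norm{\eta}_{\T}^{-1}/2)$. The genuine absolute row sums of $\mathcal K\mathcal K^*$ are just as large: with $h=R-r'$,
\[
\sum_{r'\in[R]}\abs{(\mathcal K\mathcal K^*)_{R,r'}}
\geq\sum_{R/Q\leq h\leq R/2}\frac{\abs{\sin(2\pi Qh/R)}}{\sin(2\pi Qh/R^2)}
\gg\frac{R^2}{Q}\sum_{R/Q\leq h\leq R/2}\frac{\abs{\sin(2\pi Qh/R)}}{h}
\gg\frac{R^2}{Q}\log Q .
\]
For the last step, note that the phase $2Qh/R$ advances by at most $1/4$ per step, so a positive proportion of each dyadic range has $\abs{\sin(2\pi Qh/R)}\geq1/2$. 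Thus Schur's test itself overshoots the true bound $\norm{\mathcal K}_{2\to2}^2\ll R^2/Q$ by a factor $\log Q$, and no block decomposition can repair it.

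You can see this in your own case analysis. For $q\leq R$, the terms of a block away from $0$ contribute $\sum_{k\leq q}q/k\asymp q\log q$, not $q\log(R/q+2)$. For $q>R$, you only handle the $O(1)$ values with $\norm{h\xi}_{\T}\leq1/R$, whereas the logarithm comes from the tail. The remark that $\mathcal K$ is nearly rank-one on progressions mod $q$ is not carried out. It is also vacuous when $q>R$, which is the situation in the example, where the Dirichlet denominator is about $R^2/(2Q)$. Declaring the logarithm harmless downstream amounts to proving a weaker lemma.

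\textbf{The missing idea.} Keep the $\ell^2$ structure by applying the analytic large sieve to $\mathcal K_R(\xi)$ itself, instead of Schur's test to $\mathcal K\mathcal K^*$. For $h\in\ell^2([R])$, the output $(\mathcal K_R(\xi)h)(u)=\sum_{s\in[R]}h(s)e(s\,\xi u)$ is a single trigonometric polynomial of length $R$ sampled at the points $\xi u$.

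Keep your Dirichlet approximation, so that $Q<q\leq R^2/Q$ and $\norm{\xi-a/q}_{\T}\leq q^{-2}$. The case $q<8$ is trivial. Otherwise split $[R]$ into $O(1+R/q)$ intervals of length at most $q/8$. For distinct $u,u'$ in one interval, $\norm{\xi(u-u')}_{\T}\geq1/q-1/(8q)$. The large sieve then gives
$\sum_{u\in I}\abs{(\mathcal K_R(\xi)h)(u)}^2\leq(R+8q/7)\norm{h}_2^2$.
Summing over the intervals yields $\norm{\mathcal K_R(\xi)}_{2\to2}^2\ll(1+R/q)(R+q)$. Hence $m_R(\xi)^2\ll q^{-1}+R^{-1}+qR^{-2}\ll Q^{-1}$, using $q>Q$, $Q\leq R$ and $q\leq R^2/Q$.

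This is the paper's proof. The large sieve exploits the near-orthogonality of the rows $(e(\xi us))_{s\in[R]}$, which the absolute-value row sums throw away.
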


Lemma~\ref{lem:scalar-minor} is a Weyl-type estimate; we prove it using the large sieve inequality. Let $\mathfrak m_Q$ be the set of $\beta\in\T^L$ for which at least one edge frequency $-2\beta_e$ satisfies the hypothesis of the lemma at scale $R$. Since $R\leq N$, it also satisfies that hypothesis at scale $N$. Hence, if $e$ is such an edge, every cut crossing $e$ satisfies $m_N^{S,T}(-2\beta_{S,T})\ll Q^{-1/2}$. Under the dimension hypothesis,
\[
	\gamma:=\tau-(K+2)>0.
\]
We use the mean value estimate on the first $2K+4$ powers and the pointwise estimate on the remaining $2\gamma$ powers. Finner's inequality and
\eqref{eq:intro-fractional-cover} give
\begin{equation}\label{eq:intro-minor-cut-integral}
	\int_{\mathfrak m_Q}
	\prod_{S\mid T\in\mathscr B_K}
	m_N^{S,T}(-2\beta_{S,T})^{2\tau w_K}\,d\beta
	\ll_{K,d}Q^{-\gamma}N^{-2L}.
\end{equation}

\subsection{Extracting the density increment}

It remains to explain why the major arcs retain useful combinatorial information. Let $\mu_q$ be normalized counting measure on a long $q$-spaced box,
\[
	\mu_q=N_q^{-d}\1_{q\{0,\ldots,N_q-1\}^d},
	\qquad
	N_q:=\left\lfloor\frac{R}{qQ^{L(L+2)+3}}\right\rfloor.
\]
Denote by $\Lambda_N^{\mathrm{maj}}$ the contribution to~\eqref{Fouriercount:intro} from the major arcs. The major-arc estimate gives, for some $\eta=\eta(K,d)>1$,
\begin{equation}\label{eq:intro-major-arc-bound}
	\begin{aligned}
		\bigl|\Lambda_N^{\mathrm{maj}}(F_0,\ldots,F_{K-1})\bigr|
		\ll_{\Delta,d}{}&
		R^{(K-1)d-2L}
		\sum_{q\leq2Q^L}q^{-1-\eta}
		\prod_{i=0}^{K-1}\|\mu_q*F_i\|_{\ell^K(\Z^d)}\\
		&+Q^{-2}R^{Kd/2-2L}
		\prod_{i=0}^{K-1}\|F_i\|_{\ell^2(\Z^d)}.
	\end{aligned}
\end{equation}
The first term is the important one, as $\mu_q*g$ gives information about the density of $A$ on a $q$-spaced box.

Returning to \eqref{eq:intro-large-mixed-term}, choose $Q$ to be a sufficiently large fixed power of $\alpha^{-1}$, multiplied by a sufficiently large constant. Unless $N\ll_{\Delta,d}\alpha^{-C}$, the scale conditions above hold. The minor-arc contribution bounded using~\eqref{eq:intro-minor-cut-integral} and the error term in~\eqref{eq:intro-major-arc-bound} are then negligible. Since
\[
	\|\mu_q*\1_{[N]^d}\|_{\ell^K(\Z^d)}\leq N^{d/K}
\]
and $\sum_{q\geq1}q^{-1-\eta}<\infty$, the main term in~\eqref{eq:intro-major-arc-bound} implies that, for some
$q\ll_{\Delta,d}\alpha^{-C}$,
\[
	\sum_{x\in\Z^d}\abs{\mu_q*g(x)}^K
	\gg_{\Delta,d}\alpha^KN^d.
\]
A standard positive--negative part argument, using $\mu_q*g\geq-\alpha$ and $\sum_x\mu_q*g(x)=0$, gives a point $y\in[1+q(N_q-1),N]^d$ for which
\[
	(\mu_q*\1_A)(y)>(1+\kappa)\alpha,
\]
where $\kappa>0$ depends only on $\Delta$ and $d$. Thus either
$N\ll_{\Delta,d}\alpha^{-C}$, or $A$ has density at least
$(1+\kappa)\alpha$ on a $d$-dimensional progression of common difference
$q\ll\alpha^{-C}$ and side length $N'\gg\alpha^C N$. Translating and
rescaling this progression produces another $\Delta$-free set of increased
density. The standard iteration has $O(1+\log(1/\alpha))$ steps, each
costing $O(1+\log(1/\alpha))$ in the logarithm of the scale. Hence
\[
	\log N\ll_{\Delta,d}\bigl(1+\log(1/\alpha)\bigr)^2,
\]
which is the content of Theorem~\ref{thm:main}.

\subsection{Further directions}

The cut operator method may offer a route to quantitative results for more complex configurations. A natural test case is a simplex together with its barycenter, or more general barycentric configurations consisting of the vertices of a simplex together with the barycenters of all of its $\ell$-dimensional faces. We leave these configurations as a possible direction for future work.

\subsection{Organization of the proof}

Section~\ref{sec:counting-form} introduces the weighted counting form.
Section~\ref{sec:cut-operators} develops the cut reduction and proves the
mean value estimate. Sections~\ref{sec:minor arcs} and
\ref{sec:major arcs} establish the minor- and major-arc estimates.
Section~\ref{sec:density-increment} proves and iterates the density
increment. The lattice-flag count underlying
Lemma~\ref{lem:rectangular-moment} is proved in Appendix~\ref{app:matrix-count}.
	
	\section{Notation and standard definitions}

	We use the following notation throughout the paper.
	\begin{itemize}
		\item $\mathbb N=\{1,2,\ldots\}$, $[N]=\{1,\ldots,N\}$ for
		a positive integer $N$, and $\T=\R/\Z$.  We write $e(x):=e^{2\pi i x}$ and $e_q(x):=e(x/q).$
		
		\item For $x\in\R$, let
		$\norm{x}_{\T}:=\min_{n\in\Z}\abs{x-n}.$
		If $x=(x_1,\ldots,x_m)$ lies in $\R^m$, then
		$\norm{x}_\infty:=\max_j\abs{x_j}$, while for
		$x\in\T^m$ we put
		$\norm{x}_\infty:=\max_j\norm{x_j}_{\T}$. The notation $\operatorname{diam}_\infty(E)$
		refers to diameter with respect to the maximum norm.
		
		\item For a set $E$, $\1_E$ denotes its indicator function.  If $X$
		is finite, $\E_{x\in X}$ denotes normalized averaging over $X$.
		
		\item
		For functions $\mu,F:G\to\mathbb C$, define their convolution by
		\[
		(\mu*F)(y):=\sum_{x\in G}\mu(x)F(y-x).
		\]
		
		\item If $T:H_1\to H_2$ is a bounded linear operator between Hilbert spaces,
		its adjoint $T^*:H_2\to H_1$ is determined by
		$\langle Tf,g\rangle_{H_2} = \langle f,T^*g\rangle_{H_1}.$
		We define
		\[
		\norm{T}_{2\to2}
		:=
		\sup_{\substack{f\in H_1\\ \norm{f}_{H_1}=1}}
		\norm{Tf}_{H_2}.
		\]
		If $(e_j)_j$ is any orthonormal basis of $H_1$, then
		\[
		\norm{T}_{\mathrm{HS}}
		:=
		\left(\sum_j\norm{Te_j}_{H_2}^2\right)^{1/2}.
		\]
		This value is independent of the choice of orthonormal basis.
		
		\item If $a\in(\Z/q\Z)^m$, then $(a,q)=1$ means that the coordinates
		of $a$ together with $q$ have no common divisor greater than $1$.
		
		\item We use $X\ll Y$ and $X=O(Y)$ interchangeably to mean that
		$\abs X\leq CY$ for some constant $C>0$. We write $X\gg Y$ if
		$Y\ll X$, and $X\asymp Y$ if both $X\ll Y$ and $Y\ll X$.
		Subscripts indicate the permitted dependence of the implied constants.

	\end{itemize}
	
	\section{The counting form}
	\label{sec:counting-form}
	
	Recall the definition of $\Delta$ from the introduction, and suppose that
	$A\subseteq[N]^d$ contains no copy similar to $\Delta$.  In this section we
	define a weighted count for copies of $\Delta$, extract a large term involving
	the balanced function, and then express the count by Fourier inversion.

	\subsection{The simplex equations}
	
	Put $L:=\binom K2$. Let
	\[
	\mathcal E
	:=
	\bigl\{\{i,j\}:0\leq i<j\leq K-1\bigr\},
	\qquad \abs{\mathcal E}=L.
	\]
	We count configurations
	$z=(z_0,\ldots,z_{K-1})\in(\Z^d)^K$
	satisfying
	\begin{equation}\label{eq:all-edge-shape-equations}
		\abs{z_i-z_j}^2=n\abs{v_i-v_j}^2
		\qquad(\forall\{i,j\}\in\mathcal E)
	\end{equation}
	for some integer $n>0$. These equations say that the points
	$z_0,\ldots,z_{K-1}$ form a similar copy of $\Delta$ with
	dilation factor $\sqrt n$. It is enough to consider integer values
	of the squared dilation factor: a set containing no similar copy of
	$\Delta$ contains, in particular, no such copy with integer squared
	dilation factor. Moreover, Kitaoka's simplex count, which we apply
	shortly, counts precisely the copies occurring at these
	scales. 
	
	For an integer $R\geq1$, define
	\[
	w_R(n):=R^{-2}\1_{\{R^2\leq n<2R^2\}},
	\qquad n\in\Z.
	\]
	Thus $w_R$ restricts the squared dilation factor to $n\asymp R^2$ and
	$\sum_n w_R(n)=1$.
	Put
	\[
	D_\Delta
	:=
	\max_{\{i,j\}\in\mathcal E}\abs{v_i-v_j},
	\qquad
	D_0:=1+\left\lfloor\sqrt2\,D_\Delta\right\rfloor.
	\]
	Whenever \eqref{eq:all-edge-shape-equations} holds and
	$w_R(n)\neq0$, we have
	\begin{equation}\label{eq:configuration-diameter}
		\abs{z_i-z_j}
		=\sqrt n\,\abs{v_i-v_j}
		<\sqrt2 R D_\Delta
		<D_0R
		\qquad(0\leq i<j\leq K-1).
	\end{equation}
	Fix a constant $0<\sigma<1/(4D_0)$, depending only on $\Delta$.
	Choose a fixed threshold $N_0=N_0(\Delta,d)\geq2/\sigma$, large
	enough for the representation estimate (Lemma \ref{Kitaoka}) used below.  After enlarging
	the theorem's implicit constant, we may assume $N\geq N_0$; otherwise
	the conclusion is immediate. Put
	\begin{equation}\label{eq:ambient-parameters}
		R=\lfloor\sigma N\rfloor,
		\qquad
		\alpha=\frac{\abs A}{N^d}.
	\end{equation}

	\subsection{The weighted count and the balanced function}
	
	For finitely supported functions
	$F_0,\ldots,F_{K-1}:\Z^d\to\mathbb C$, define the simplex counting form 
	\begin{align}
		\Lambda_N(F_0,\ldots,F_{K-1})
		&:=
		\sum_{z_0,\ldots,z_{K-1}\in\Z^d}\sum_{n\in\Z}
		w_R(n)
		\1_{\abs{z_i-z_j}^2=n\abs{v_i-v_j}^2
			\;(\forall\{i,j\}\in\mathcal E)}
		\prod_{i=0}^{K-1}F_i(z_i).
		\label{eq:global-count}
	\end{align}

	\begin{lemma}[Kitaoka's simplex count]\label{Kitaoka}
		If $d\geq4K+4$ and $N\geq N_0$, then
		\begin{equation}\label{eq:count-normalization}
			\Lambda_N(\1_{[N]^d},\ldots,\1_{[N]^d})
			\gg_{\Delta,d}N^dR^{(K-1)d-2L}.
		\end{equation}
	\end{lemma}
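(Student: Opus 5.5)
Since every $w_R(n)\geq0$ and we are counting with indicator functions, the plan is to bound $\Lambda_N(\1_{[N]^d},\ldots,\1_{[N]^d})$ from below by restricting to a convenient subfamily of configurations. Fix the first vertex $z_0$ in the central box $B:=[N/2-\sigma N/2,\,N/2+\sigma N/2]^d\cap\Z^d$. By \eqref{eq:configuration-diameter}, any solution of \eqref{eq:all-edge-shape-equations} with $w_R(n)\ne0$ satisfies $\abs{z_i-z_0}<D_0R\leq D_0\sigma N<N/4$, using $\sigma<1/(4D_0)$. Hence all $z_i$ lie in $[N]^d$ once $N\geq N_0$. This gives
\[
\Lambda_N(\1_{[N]^d},\ldots,\1_{[N]^d})\geq \abs B\sum_{n}w_R(n)\,r_\Delta(n),
\]
where $r_\Delta(n)$ counts tuples $(y_1,\ldots,y_{K-1})\in(\Z^d)^{K-1}$ with $y_i=z_i-z_0$ and $\abs{y_i-y_j}^2=n\abs{v_i-v_j}^2$ for all $0\leq i<j\leq K-1$, with the convention $y_0=0$. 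Since $\abs B\gg_\Delta N^d$, it suffices to show $\sum_n w_R(n)r_\Delta(n)\gg R^{(K-1)d-2L}$.

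Polarization turns the edge equations into a Gram-matrix condition: $r_\Delta(n)$ counts representations of $nG_\Delta$ by the quadratic form $\abs{\cdot}^2$ on $\Z^d$. Here $G_\Delta$ is the $(K-1)\times(K-1)$ Gram matrix $\bigl((v_i-v_0)\cdot(v_j-v_0)\bigr)_{i,j}$, which is positive definite because $\Delta$ is nondegenerate, and the condition is $Y^{\mathsf T}Y=nG_\Delta$ with $Y$ the $d\times(K-1)$ matrix of columns $y_i$. This is exactly the setting of Kitaoka's asymptotic for representations of a positive definite matrix by the sum of $d$ squares. For $d$ large relative to the rank $m=K-1$ (our hypothesis $d\geq4K+4$ is certainly enough), it gives
\[
r_\Delta(n)=c_d\,\det(nG_\Delta)^{(d-m-1)/2}\,\mathfrak S(nG_\Delta)+O\bigl(n^{m(d-m-1)/2-\delta}\bigr),
\]
where the main term has size $n^{(m(d-m-1))/2}$ and $m(d-m-1)/2=((K-1)d-2L)/2$. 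Since $n\asymp R^2$, this matches the target $R^{(K-1)d-2L}$.

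The main obstacle is the singular series: we need $\mathfrak S(nG_\Delta)\gg1$ uniformly, or at least on a positive proportion of $n\in[R^2,2R^2)$. Because $G_\Delta$ is integral and $\Z^d$ already contains one copy $\Delta$, the local densities at odd primes are bounded below by Hensel-type arguments once $d\geq 2m+3$. The primes dividing $2\det G_\Delta$ and the $p$-adic valuation of $n$ are the delicate part. To handle them, we plan to restrict to $n$ that are perfect squares times a fixed residue class, or simply to $n=k^2$. For $n=k^2$ the configuration $k\Delta$ provides a $p$-adic representation at every prime, so each local density is bounded below uniformly by a standard Siegel lower bound for primitive-type representations in large dimension. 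If only $n=k^2$ is used, there are $\asymp R$ such $n$, which loses a factor $R$ in the weighted sum. So instead we will take $n\equiv1$ modulo a fixed modulus $M=M(\Delta)$ built from the bad primes. On this class the local densities are again bounded below, because $nG_\Delta$ is then $p$-adically equivalent to $G_\Delta$ up to unit squares for a suitable $M$. This class carries a positive proportion $1/M$ of the weight $w_R$, which gives the bound with constants depending on $\Delta$ and $d$.
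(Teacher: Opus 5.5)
Your argument is correct, and its skeleton matches the paper's. You fix a base point in a box well inside $[N]^d$; your central box of side $\sigma N$ plays the role of the paper's $\mathcal I_N$ with margin $D_0R$. You polarize the edge equations into $Y^{\mathsf T}Y=nG_\Delta$ and apply Kitaoka's asymptotic, whose main term has the right size $n^{(K-1)(d-K)/2}\asymp R^{(K-1)d-2L}$.

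The difference lies entirely in the singular series. The paper imports from Magyar's recording of Kitaoka's theorem a lower bound $\gg_{\Delta,d}n^{(K-1)(d-K)/2}$ valid for \emph{every} $n\in[R^2,2R^2)$, so its proof contains no local arithmetic. You give up pointwise uniformity and keep only $n\equiv1\pmod M$, say with $8\prod_{p\mid 2\det G_\Delta}p$ dividing $M$. At each $p\mid M$, $n$ is then a unit square in $\Z_p$, so $\alpha_p(nG_\Delta)=\alpha_p(G_\Delta)$. This density is positive because the columns $v_i-v_0$ represent $G_\Delta$ over $\Z$ and the representation map is a submersion at full-rank points. This costs only a factor $1/M$ in the weighted sum. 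It also cleanly sidesteps the behaviour of high powers of $2$ and of primes dividing $\det G_\Delta$ in $n$, the phenomenon that makes the singular series for $r_4(2^k)$ decay. So your route is more self-contained than the citation, at the price of a positive-proportion rather than pointwise statement.

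One point you should make explicit is the good primes $p\nmid2\det G_\Delta$ that divide $n$. Since $\omega(n)$ is unbounded, you need $\alpha_p(nG_\Delta)\geq1-O(p^{-1-\delta})$ uniformly in $v_p(n)$, together with positivity for small $p$; the bound $\alpha_p\gg1$ as you phrased it is not enough. This does hold. For $p\mid n$, the primitive solutions modulo $p$ are the full-rank totally isotropic $m$-frames in $\mathbb F_p^d$. There are $p^{dm-m(m+1)/2}(1+O_{d,m}(p^{-3/2}))$ of them when $d\geq2m+3$, and at least one for every $p$ once $d\geq2m+1$. They lift by Hensel's lemma, since $H\mapsto X^{\mathsf T}H+H^{\mathsf T}X$ is onto the symmetric matrices modulo $p$ when $X$ has rank $m$. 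The primitive density alone therefore gives the required bound, and the Euler product converges to a positive constant depending only on $\Delta$ and $d$.
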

	
	\begin{proof}
		Put $m_0=0$, and let
		\[
		\mathcal I_N
		:=
		\bigl\{
		x\in[N]^d:
		1+D_0R\leq x_j\leq N-D_0R
		\text{ for }1\leq j\leq d
		\bigr\}.
		\]
		Since $D_0R<N/4$, we have $\abs{\mathcal I_N}\gg_dN^d$.
		
		Fix $x\in\mathcal I_N$, and write $z_i=x+m_i$. Since
		$z_i-z_j=m_i-m_j,$
		the configuration equations are
		\[
		\abs{m_i-m_j}^2=n\abs{v_i-v_j}^2
		\qquad(\{i,j\}\in\mathcal E).
		\]
		Because $m_0=0$, simple rearranging shows that these equations are
		equivalent to
		\[
		m_i\cdot m_j
		=
		n\,(v_i-v_0)\cdot(v_j-v_0)
		\qquad(1\leq i,j\leq K-1).
		\]
		Kitaoka's estimate
		\cite[Theorems~A--C]{Kitaoka}, in the form recorded in
		\cite[(2.16) and (3.10)]{MagyarKPoint}, therefore gives, whenever
		$R^2\leq n<2R^2$,
		\[
		\#\left\{
		\begin{aligned}
			&(m_1,\ldots,m_{K-1})\in(\Z^d)^{K-1}:\\[-2pt]
			&\abs{m_i-m_j}^2=n\abs{v_i-v_j}^2
			\quad\text{for every }\{i,j\}\in\mathcal E
		\end{aligned}
		\right\}
		\gg_{\Delta,d}
		n^{(K-1)(d-K)/2}.
		\]
		By \eqref{eq:configuration-diameter}, every such $m_i$ satisfies
		$\abs{m_i}<D_0R$, so $x+m_i\in[N]^d$ for every
		$x\in\mathcal I_N$. There are $R^2$ integers $n$ in this range, and
		$w_R(n)=R^{-2}$ for each of them. Hence
		\[
		\begin{aligned}
			\Lambda_N(\1_{[N]^d},\ldots,\1_{[N]^d})
			\gg_{\Delta,d}
			N^d\cdot R^2\cdot R^{-2}
			\cdot R^{(K-1)(d-K)} =N^dR^{(K-1)d-2L}
			=N^dR^{(K-1)d-2L}.
		\end{aligned}
		\]
	\end{proof}
	
	Since $A$ contains no similar copy of $\Delta$, we immediately have
	\begin{equation}\label{eq:configuration-free-count}
		\Lambda_N(\1_A,\ldots,\1_A)=0.
	\end{equation}
	
	We now extract an important consequence of
	\eqref{eq:configuration-free-count}.  There is nothing to prove if
	$A=\varnothing$, so assume that $\alpha>0$ and write
	$g:=\1_A-\alpha\1_{[N]^d}.$
	Then
	\begin{equation}\label{eq:g-global-bounds}
		\sum_{x\in\Z^d}g(x)=0,\qquad
		\abs{g}\leq1,\qquad
		\norm{g}_{\ell^2(\Z^d)}^2
		=N^d\alpha(1-\alpha)\leq\alpha N^d.
	\end{equation}
	For each $\mathcal C\subseteq\{0,\ldots,K-1\}$, let
	$F_i^{\mathcal C}=g$ if $i\in\mathcal C$ and
	$F_i^{\mathcal C}=\1_{[N]^d}$ otherwise.  Multilinearity and
	\eqref{eq:configuration-free-count} give the exact expansion
	\begin{equation}\label{eq:mean-zero-expansion}
		0=
		\sum_{\mathcal C\subseteq\{0,\ldots,K-1\}}
		\alpha^{K-\abs{\mathcal C}}
		\Lambda_N(F_0^{\mathcal C},\ldots,F_{K-1}^{\mathcal C}).
	\end{equation}
	The term with $\mathcal C=\varnothing$ is
	$\alpha^K\Lambda_N(\1_{[N]^d},\ldots,\1_{[N]^d})$.  Moving this term
	to the other side, taking absolute values, and using
	\eqref{eq:count-normalization}, we obtain
	\[
	\sum_{\substack{\mathcal C\subseteq\{0,\ldots,K-1\}\\
			\mathcal C\neq\varnothing}}
	\alpha^{K-\abs{\mathcal C}}
	\abs{\Lambda_N(F_0^{\mathcal C},\ldots,F_{K-1}^{\mathcal C})}
	\gg_{\Delta,d}\alpha^KN^dR^{(K-1)d-2L}.
	\]
	There are only $2^K-1$ terms in this sum.  Hence some nonempty
	$\mathcal B\subseteq\{0,\ldots,K-1\}$, with
	$s:=\abs{\mathcal B}$ satisfying $1\leq s\leq K$, has the property
	that, writing $F_i=F_i^{\mathcal B}$,
	\begin{equation}\label{eq:large-mean-zero-term}
		\abs{\Lambda_N(F_0,\ldots,F_{K-1})}
		\gg_{\Delta,d}\alpha^sN^dR^{(K-1)d-2L},
	\end{equation}
	where $F_i=g$ for $i\in\mathcal B$ and
	$F_i=\1_{[N]^d}$ otherwise.
	
	\subsection{Applying the orthogonality relation}
	
	For
	$\beta=(\beta_{ij})_{\{i,j\}\in\mathcal E}\in\T^L,$
	we use the convention $\beta_{ji}=\beta_{ij}$.  For a tuple
	$u=(u_0,\ldots,u_{K-1})$, set
	\[
	\mathcal Q_\beta(u)
	:=
	\sum_{\{i,j\}\in\mathcal E}
	\beta_{ij}\abs{u_i-u_j}^2.
	\]
	
	We apply the orthogonality relation to detect solutions to the $L$ equations
	in the definition of $\Lambda_N$, writing each equation as
	$\abs{z_i-z_j}^2-n\abs{v_i-v_j}^2=0.$
	If $z=(z_0,\ldots,z_{K-1})$ and
	$v=(v_0,\ldots,v_{K-1})$, the resulting phase is
	\[
	\sum_{\{i,j\}\in\mathcal E}
	\beta_{ij}
	\left(
	\abs{z_i-z_j}^2-n\abs{v_i-v_j}^2
	\right)
	=
	\mathcal Q_\beta(z)-n\mathcal Q_\beta(v).
	\]
	Thus
	\[
	\1_{\abs{z_i-z_j}^2=n\abs{v_i-v_j}^2
		\;(\forall\{i,j\}\in\mathcal E)}
	=
	\int_{\T^L}
	e\bigl(\mathcal Q_\beta(z)-n\mathcal Q_\beta(v)\bigr)\,d\beta.
	\]
	Substituting this identity into the counting form gives
	\begin{equation}\label{orthorep}
		\Lambda_N(F_0,\ldots,F_{K-1})
		=
		\int_{\T^L}
		\sum_{n\in\Z}w_R(n)
		e\bigl(-n\mathcal Q_\beta(v)\bigr)
		\sum_{z_0,\ldots,z_{K-1}\in\Z^d}
		e\bigl(\mathcal Q_\beta(z)\bigr)
		\prod_{i=0}^{K-1}F_i(z_i) d\beta.
	\end{equation}
	
	Observe that the the sum over $n$ in \eqref{orthorep} is  $\widehat w_R\bigl(\mathcal Q_\beta(v)\bigr)$.
	For $\beta\in\T^L$, write
	\begin{equation}\label{eq:fixed-parameter-sum}
		\mathcal S_\beta(F_0,\ldots,F_{K-1})
		:=
		\sum_{z_0,\ldots,z_{K-1}\in\Z^d}
		e\bigl(\mathcal Q_\beta(z)\bigr)
		\prod_{i=0}^{K-1}F_i(z_i).
	\end{equation}
	We write $\mathcal S_\beta$ when the inputs are clear.  Since every
	$F_i$ is supported on $[N]^d$, the sum in
	\eqref{eq:fixed-parameter-sum} is finite and may equivalently be taken
	over $z_0,\ldots,z_{K-1}\in[N]^d$.
	For a measurable set $\mathfrak B\subseteq\T^L$, define
	\begin{equation}
		\Lambda_N^{\mathfrak B}(F_0,\ldots,F_{K-1})
		:=
		\int_{\mathfrak B}
		\widehat w_R\bigl(\mathcal Q_\beta(v)\bigr)
		\mathcal S_\beta\,d\beta.
		\label{eq:arc-restricted-form}
	\end{equation}
	Taking $\mathfrak B=\T^L$ gives
	$\Lambda_N=\Lambda_N^{\T^L}$.
	
	Since $w_R$ is nonnegative and $\sum_n w_R(n)=1$,
	\begin{equation}\label{eq:scale-fourier-bound}
		\norm{\widehat w_R}_{L^\infty(\T)}
		\leq1.
	\end{equation}
	
	\section{Cut operators}
	\label{sec:cut-operators}
	
	By \eqref{eq:arc-restricted-form}, both the major arc and minor arc
	estimates reduce to bounds for the exponential sum
	\[
	\mathcal S_\beta(F_0,\ldots,F_{K-1})
	=
	\sum_{z_0,\ldots,z_{K-1}\in[N]^d}
	e\bigl(\mathcal Q_\beta(z)\bigr)
	\prod_{i=0}^{K-1}F_i(z_i),
	\]
	where the phase is
	\[
	\mathcal Q_\beta(z)
	=
	\sum_{\{i,j\}\in\mathcal E}
	\beta_{ij}\abs{z_i-z_j}^2.
	\]
	We estimate this sum by partitioning the variables into two blocks
	and treating the part of the phase coupling those blocks as an operator
	kernel.
	
	\subsection{The cut operator reduction}
	
	A \emph{cut} $S\mid T$ is a partition of
	$\{0,\ldots,K-1\}$ into two nonempty sets.  Fix such a cut, and for
	$z=(z_0,\ldots,z_{K-1})$, write
	\[
	z_S:=(z_i)_{i\in S},
	\qquad
	z_T:=(z_j)_{j\in T}.
	\]
	An edge contained entirely in $S$ or entirely in $T$ contributes to
	$\mathcal Q_\beta(z)$ a term depending on only one of these two blocks.
	For a crossing edge, with $i\in S$ and $j\in T$, we have
	\[
	\beta_{ij}\abs{z_i-z_j}^2
	=
	\beta_{ij}\abs{z_i}^2
	-2\beta_{ij}z_i\cdot z_j
	+\beta_{ij}\abs{z_j}^2.
	\]
	Collecting the terms according to the variables on which they depend,
	define
	\[
	\begin{aligned}
		\mathcal Q_\beta^S(z_S)
		&:=
		\sum_{\substack{\{i,i'\}\in\mathcal E\\i,i'\in S}}
		\beta_{ii'}\abs{z_i-z_{i'}}^2
		+
		\sum_{i\in S,\,j\in T}\beta_{ij}\abs{z_i}^2,\\
		\mathcal Q_\beta^{S,T}(z_S,z_T)
		&:=
		-2\sum_{i\in S,\,j\in T}\beta_{ij}z_i\cdot z_j,\\
		\mathcal Q_\beta^T(z_T)
		&:=
		\sum_{\substack{\{j,j'\}\in\mathcal E\\j,j'\in T}}
		\beta_{jj'}\abs{z_j-z_{j'}}^2
		+
		\sum_{i\in S,\,j\in T}\beta_{ij}\abs{z_j}^2.
	\end{aligned}
	\]
	Then
	\begin{equation}\label{eq:phase-across-cut}
		\mathcal Q_\beta(z)
		=
		\mathcal Q_\beta^S(z_S)
		+
		\mathcal Q_\beta^{S,T}(z_S,z_T)
		+
		\mathcal Q_\beta^T(z_T).
	\end{equation}
	The first and third terms in
	\eqref{eq:phase-across-cut} depend on variables from only one side of the
	cut, while the middle term is the only one involving variables from both
	sides.
	
	We view
	$e\bigl(\mathcal Q_\beta^{S,T}(z_S,z_T)\bigr)$
	as the kernel of an operator taking functions of $z_T$ to functions of
	$z_S$. The cut-reduction lemma below shows that the two one-sided phases
	$e(Q_\beta^S(z_S))$ and $e(Q_\beta^T(z_T))$ in
	\eqref{eq:phase-across-cut} act by unitary multiplication and therefore
	do not affect the relevant $2\to2$ operator norm.
	
	The cut operator reduction will be applied in two different measure-space settings.  On the minor arcs,
	$(X_i,\mu_i)=\bigl([N]^d,\#\bigr),$
	where $\#$ denotes counting measure.  On the major arcs,
	$X_i=B_i\times(\Z/q\Z)^d$, where $B_i\subseteq\Z^d$ is finite, and we equip $X_i$ with the measure $\mu_i$ determined by
	\[
	\int_{X_i}f\,d\mu_i
	=
	\E_{r_i\bmod q}\sum_{u_i\in B_i}f(u_i,r_i).
	\]
	Therefore, we formulate the reduction for arbitrary finite measure spaces out of convenience.
	
	For $0\leq i\leq K-1$, let $(X_i,\mu_i)$ be a finite measure space and
	put
	\[
	X_S:=\prod_{i\in S}X_i,
	\qquad
	X_T:=\prod_{j\in T}X_j,
	\qquad
	\mu_S:=\bigotimes_{i\in S}\mu_i,
	\qquad
	\mu_T:=\bigotimes_{j\in T}\mu_j.
	\]
	Given a measurable function
	$\mathcal Q^{S,T}:X_S\times X_T\longrightarrow\T,$
	define
	\[
	\begin{aligned}
		\mathcal K^{S,T}:L^2(X_T,\mu_T)&\longrightarrow L^2(X_S,\mu_S),\\
		\bigl(\mathcal K^{S,T}h\bigr)(x_S)
		&:=
		\int_{X_T}
		e\!\left(\mathcal Q^{S,T}(x_S,x_T)\right)
		h(x_T)\,d\mu_T(x_T).
	\end{aligned}
	\]
	We call every operator of this form a \emph{cut operator}.
	
	\begin{lemma}[Cut-operator reduction]
		\label{lem:cut-cauchy-schwarz}
		Let $S\mid T$, $(X_i,\mu_i)$, $\mathcal Q^{S,T}$, and
		$\mathcal K^{S,T}$ be as above.  Let
		\[
		\mathcal Q^S:X_S\to\T,
		\qquad
		\mathcal Q^T:X_T\to\T
		\]
		be measurable.  Then, for $f_i\in L^2(X_i,\mu_i)$,
		\begin{align}
			&\abs{
				\int_{X_S}\int_{X_T}
				e\!\left(
				\mathcal Q^S(x_S)
				+\mathcal Q^{S,T}(x_S,x_T)
				+\mathcal Q^T(x_T)
				\right)
				\prod_{i=0}^{K-1}f_i(x_i)
				\,d\mu_T(x_T)\,d\mu_S(x_S)
			}
			\notag\\
			&\hspace{8em}\leq
			\norm{\mathcal K^{S,T}}_{2\to2}
			\prod_{i=0}^{K-1}\norm{f_i}_{L^2(X_i,\mu_i)}.
			\label{eq:cut-operator-reduction}
		\end{align}
	\end{lemma}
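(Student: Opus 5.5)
The plan is to absorb the one-sided phases into the functions on each side and recognize the integral as an inner product. Set
\[
H_S(x_S):=e\bigl(\mathcal Q^S(x_S)\bigr)\prod_{i\in S}f_i(x_i),
\qquad
H_T(x_T):=e\bigl(\mathcal Q^T(x_T)\bigr)\prod_{j\in T}f_j(x_j).
\]
Equivalently, $H_S=U_S\bigl(\bigotimes_{i\in S}f_i\bigr)$, where $U_S$ is multiplication by the unimodular function $e(\mathcal Q^S)$, which is unitary on $L^2(X_S,\mu_S)$; define $U_T$ similarly.

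Since all measure spaces are finite, Fubini applies to the absolutely integrable integrand, at least when the $f_i$ are bounded. One can reduce to this case by density, or simply note that the integrand is dominated by $\prod_i|f_i|$, which is integrable by Cauchy--Schwarz on each finite space. Doing the $x_T$ integral first, the left-hand side of \eqref{eq:cut-operator-reduction} becomes
\[
\Bigl|\int_{X_S}H_S(x_S)\,\bigl(\mathcal K^{S,T}H_T\bigr)(x_S)\,d\mu_S(x_S)\Bigr|
=\bigl|\langle \mathcal K^{S,T}H_T,\overline{H_S}\rangle_{L^2(X_S,\mu_S)}\bigr|.
\]
Cauchy--Schwarz followed by the definition of the operator norm bounds this by
\[
\norm{\mathcal K^{S,T}}_{2\to2}\,\norm{H_T}_{L^2(\mu_T)}\,\norm{H_S}_{L^2(\mu_S)}.
\]

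The final step is the norm computation. Because $|e(\cdot)|=1$, we have $|H_S|=\prod_{i\in S}|f_i(x_i)|$. Tonelli on the product measure $\mu_S=\bigotimes_{i\in S}\mu_i$ then gives
\[
\norm{H_S}_{L^2(\mu_S)}^2=\prod_{i\in S}\norm{f_i}_{L^2(\mu_i)}^2,
\]
and the same holds for $T$. Multiplying the two identities gives $\prod_{i=0}^{K-1}\norm{f_i}_{L^2(X_i,\mu_i)}$, which is the claimed bound.

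There is no genuine obstacle here. The only point needing care is that $H_T$ actually lies in $L^2(X_T,\mu_T)$, so that $\mathcal K^{S,T}H_T$ is defined and the interchange of integrals is legitimate. Both follow from the tensor-product norm identity together with the finiteness of the measures; the kernel is bounded by $1$, so $\mathcal K^{S,T}$ is even Hilbert--Schmidt, and in particular bounded.
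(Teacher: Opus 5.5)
Your proposal is correct and is essentially the paper's argument. The only cosmetic difference is that the paper conjugates the operator, noting $\norm{U_S\mathcal K^{S,T}U_T}_{2\to2}=\norm{\mathcal K^{S,T}}_{2\to2}$, whereas you absorb the unimodular phases into $H_S$ and $H_T$; the Cauchy--Schwarz step and the product-measure norm identity are the same, and your integrability remarks fill in details the paper leaves implicit.
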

	
	\begin{proof}
		Define multiplication operators by
		\[
		\bigl(U_Sh\bigr)(x_S)
		:=
		e\!\left(\mathcal Q^S(x_S)\right)h(x_S),
		\qquad
		\bigl(U_Th\bigr)(x_T)
		:=
		e\!\left(\mathcal Q^T(x_T)\right)h(x_T).
		\]
		Both operators are unitary.
		
		For every $h\in L^2(X_T,\mu_T)$,
		\begin{align*}
			\bigl(U_S\mathcal K^{S,T}U_Th\bigr)(x_S)
			&=
			e\!\left(\mathcal Q^S(x_S)\right)
			\int_{X_T}
			e\!\left(\mathcal Q^{S,T}(x_S,x_T)\right)
			e\!\left(\mathcal Q^T(x_T)\right)
			h(x_T)\,d\mu_T(x_T)\\
			&=
			\int_{X_T}
			e\!\left(
			\mathcal Q^S(x_S)
			+\mathcal Q^{S,T}(x_S,x_T)
			+\mathcal Q^T(x_T)
			\right)
			h(x_T)\,d\mu_T(x_T).
		\end{align*}
		Thus $U_S\mathcal K^{S,T}U_T$ is exactly the operator with the full
		phase.  Since $U_S$ and $U_T$ are unitary,
		\[
		\norm{U_S\mathcal K^{S,T}U_T}_{2\to2}
		=
		\norm{\mathcal K^{S,T}}_{2\to2}.
		\]
		The integral in the statement can now be written as
		\[
		\int_{X_S}
		\left(\prod_{i\in S}f_i(x_i)\right)
		\left(
		U_S\mathcal K^{S,T}U_T
		\left[\prod_{j\in T}f_j\right]
		\right)(x_S)\,d\mu_S(x_S).
		\]
		Applying Cauchy--Schwarz gives
		\[
		\norm{\mathcal K^{S,T}}_{2\to2}
		\left\|
		\prod_{i\in S}f_i
		\right\|_{L^2(X_S,\mu_S)}
		\left\|
		\prod_{j\in T}f_j
		\right\|_{L^2(X_T,\mu_T)}.
		\]
		The product measures give
		\[
		\left\|\prod_{i\in S}f_i\right\|_{L^2(X_S,\mu_S)}
		=
		\prod_{i\in S}\norm{f_i}_{L^2(X_i,\mu_i)},
		\qquad
		\left\|\prod_{j\in T}f_j\right\|_{L^2(X_T,\mu_T)}
		=
		\prod_{j\in T}\norm{f_j}_{L^2(X_j,\mu_j)},
		\]
		which proves the claim.
	\end{proof}
	
	\subsection{Tensorization}\label{discretecut}
	
	We now consider a cut operator which naturally appears in the present problem. Take
	$X_i=[R]^d$ with counting measure, and let
	$\Theta=(\Theta_{ij})_{i\in S,\,j\in T}\in\T^{\abs S\abs T}$.  For
	$z=(z_0,\ldots,z_{K-1})\in([R]^d)^K$, set
	\[
	\mathcal Q^{S,T}(z_S,z_T)
	:=
	\sum_{i\in S,\,j\in T}\Theta_{ij}z_i\cdot z_j.
	\]
	The resulting cut operator is
	\[
	\begin{aligned}
		\mathcal K_{R,d}^{S,T}(\Theta)&:
		\ell^2\bigl(([R]^d)^T\bigr)
		\longrightarrow
		\ell^2\bigl(([R]^d)^S\bigr),\\
		\bigl(\mathcal K_{R,d}^{S,T}(\Theta)h\bigr)(z_S)
		&:={}
		\sum_{z_T\in([R]^d)^T}
		e\!\left(
		\sum_{i\in S,\,j\in T}\Theta_{ij}z_i\cdot z_j
		\right)h(z_T).
	\end{aligned}
	\]
	We identify this $d$-dimensional cut operator with an
	$R^{d\abs S}\times R^{d\abs T}$ matrix:
	\[
	\mathcal K_{R,d}^{S,T}(\Theta)
	=
	\left(
	e\!\left(
	\sum_{i\in S,\,j\in T}\Theta_{ij}z_i\cdot z_j
	\right)
	\right)_{\substack{
			z_S\in([R]^d)^S\\
			z_T\in([R]^d)^T
	}}.
	\]

	For the one-dimensional case, we write
	$\mathcal K_R^{S,T}(\Theta) := \mathcal K_{R,1}^{S,T}(\Theta)$
	and define the normalized cut norm by
	\begin{equation}\label{eq:normalized-operator}
		m_R^{S,T}(\Theta)
		:=
		R^{-K/2}\norm{\mathcal K_R^{S,T}(\Theta)}_{2\to2}.
	\end{equation}
	By Cauchy--Schwarz, the operator norm of a finite matrix is at most its
	Hilbert--Schmidt norm.  Since the $R^K$ entries of
	$\mathcal K_R^{S,T}(\Theta)$ all have modulus one,
	\[
	\norm{\mathcal K_R^{S,T}(\Theta)}_{2\to2}
	\leq
	\norm{\mathcal K_R^{S,T}(\Theta)}_{\mathrm{HS}}
	=R^{K/2},
	\]
	and therefore $0\leq m_R^{S,T}(\Theta)\leq1$.
	
	The following lemma gives a relationship between the operator norm of $\mathcal K_R^{S,T}$ and $\mathcal K_{R,d}^{S,T}$. This reduction plays a key role throughout the proof.

	\begin{lemma}\label{lem:cut-tensorization}
		For every cut $S\mid T$,
		\[
		\bigl\|\mathcal K_{R,d}^{S,T}(\Theta)\bigr\|_{2\to2}
		=
		\bigl\|\mathcal K_R^{S,T}(\Theta)\bigr\|_{2\to2}^{d}
		=
		R^{Kd/2}m_R^{S,T}(\Theta)^d.
		\]
	\end{lemma}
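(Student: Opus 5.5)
The plan is to identify $\mathcal K_{R,d}^{S,T}(\Theta)$ with a Kronecker (tensor) product of $d$ copies of the one-dimensional matrix $\mathcal K_R^{S,T}(\Theta)$, and then use multiplicativity of the operator norm under tensor products.

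\emph{Step 1: factor the matrix.} Write each $z_i\in[R]^d$ in coordinates as $z_i=(z_i^{(1)},\ldots,z_i^{(d)})$. Reorder the index set $([R]^d)^S$ as $\prod_{k=1}^d [R]^S$, sending $z_S$ to $(z_S^{(1)},\ldots,z_S^{(d)})$ with $z_S^{(k)}:=(z_i^{(k)})_{i\in S}$, and similarly for $T$. These reorderings are bijections of index sets, so they induce unitary identifications of $\ell^2$ spaces that do not change operator norms. Since $z_i\cdot z_j=\sum_k z_i^{(k)}z_j^{(k)}$, each entry factors:
\[
e\Bigl(\sum_{i\in S,\,j\in T}\Theta_{ij}z_i\cdot z_j\Bigr)
=\prod_{k=1}^d e\Bigl(\sum_{i\in S,\,j\in T}\Theta_{ij}z_i^{(k)}z_j^{(k)}\Bigr).
\]
The $k$-th factor is exactly the $(z_S^{(k)},z_T^{(k)})$ entry of $\mathcal K_R^{S,T}(\Theta)$. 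Hence, under these identifications,
\[
\mathcal K_{R,d}^{S,T}(\Theta)=\mathcal K_R^{S,T}(\Theta)^{\otimes d}.
\]

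\emph{Step 2: multiplicativity of the norm.} We need $\|A\otimes B\|_{2\to2}=\|A\|_{2\to2}\|B\|_{2\to2}$ for finite matrices; induction on $d$ then finishes. I would prove this via singular value decompositions. If $A=U_1\Sigma_1V_1^*$ and $B=U_2\Sigma_2V_2^*$, then
\[
A\otimes B=(U_1\otimes U_2)(\Sigma_1\otimes\Sigma_2)(V_1\otimes V_2)^*,
\]
and tensor products of unitaries are unitary. So the singular values of $A\otimes B$ are the products $\sigma_i(A)\sigma_j(B)$, and the largest of these is $\|A\|\,\|B\|$.

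Alternatively, one can argue by inequalities. The bound $\le$ follows from $A\otimes B=(A\otimes I)(I\otimes B)$, since each of these factors is a direct sum of copies of $A$ or $B$ after reordering. The bound $\ge$ follows by testing on $x\otimes y$, where $x$ and $y$ are top right singular vectors.

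\emph{Step 3: the normalized form.} The final equality is immediate from the definition \eqref{eq:normalized-operator}: $\|\mathcal K_R^{S,T}\|^d=(R^{K/2}m_R^{S,T})^d$. There is no real obstacle here. The only point needing care is bookkeeping, namely checking that the index reordering in Step 1 is compatible on both the $S$ and $T$ sides simultaneously, so that the matrix is literally a Kronecker product and not merely similar to one.
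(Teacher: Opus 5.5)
Your proposal is correct and follows the paper's proof: reindex rows and columns coordinatewise, factor each entry to identify $\mathcal K_{R,d}^{S,T}(\Theta)$ with $\mathcal K_R^{S,T}(\Theta)^{\otimes d}$, and apply multiplicativity of the $2\to2$ norm under tensor products. The paper cites that multiplicativity as standard, and your singular-value justification of it is valid, including in the rectangular case $\abs S\neq\abs T$.
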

	
	\begin{proof}
		Write
		$z_i=(z_{i,1},\ldots,z_{i,d})$ and put
		\[
		z_{S,\ell}:=(z_{i,\ell})_{i\in S}\in[R]^S,
		\qquad
		z_{T,\ell}:=(z_{j,\ell})_{j\in T}\in[R]^T
		\qquad (1\leq\ell\leq d).
		\]
		Thus the row index $z_S$ is the same data as
		$(z_{S,1},\ldots,z_{S,d})$, and the column index $z_T$ is the same
		data as $(z_{T,1},\ldots,z_{T,d})$.  Under this identification, the
		$(z_S,z_T)$-entry of $\mathcal K_{R,d}^{S,T}(\Theta)$ is
		\[
		\begin{aligned}
			e\!\left(
			\sum_{i\in S,\,j\in T}\Theta_{ij}z_i\cdot z_j
			\right)
			&=
			e\!\left(
			\sum_{\ell=1}^d
			\sum_{i\in S,\,j\in T}
			\Theta_{ij}z_{i,\ell}z_{j,\ell}
			\right)\\
			&=
			\prod_{\ell=1}^d
			e\!\left(
			\sum_{i\in S,\,j\in T}
			\Theta_{ij}z_{i,\ell}z_{j,\ell}
			\right).
		\end{aligned}
		\]
		For each $\ell$, the corresponding factor is the
		$(z_{S,\ell},z_{T,\ell})$-entry of
		$\mathcal K_R^{S,T}(\Theta)$.  This is precisely the definition of a
		$d$-fold tensor product:
		\[
		\mathcal K_{R,d}^{S,T}(\Theta)
		=
		\bigl(\mathcal K_R^{S,T}(\Theta)\bigr)^{\otimes d}.
		\]
		The standard multiplicativity of the $2\to2$ operator norm under tensor
		products now gives
		\[
		\bigl\|\mathcal K_{R,d}^{S,T}(\Theta)\bigr\|_{2\to2}
		=
		\bigl\|\mathcal K_R^{S,T}(\Theta)\bigr\|_{2\to2}^{d}.
		\]
		Finally, by the definition of $m_R^{S,T}(\Theta)$,
		\[
		\bigl\|\mathcal K_R^{S,T}(\Theta)\bigr\|_{2\to2}^{d}
		=
		R^{Kd/2}m_R^{S,T}(\Theta)^d.
		\]
	\end{proof}
	
	The cut operators $\mathcal K^{S,T}_R$ appear in both the major and minor arc estimates.  On the minor arcs, Lemma \ref{lem:cut-cauchy-schwarz} and 
	Lemma~\ref{lem:cut-tensorization} reduce the analysis to understanding 
	$m_N^{S,T}(-2\beta_{S,T})$.  On the major arcs, the same reductions lead us to 
	$m_N^{S,T}(-2\theta_{S,T})$ paired with the complete-sum factor
	$\abs{\operatorname{im}(2a_{S,T})}^{-d/2}$. Here we used the following useful notation: for the coefficient vector $b=(b_e)_{e\in\mathcal E}$, put
	$b_{ij}=b_{ji}:=b_{\{i,j\}}$ and write
	$b_{S,T}:=(b_{ij})_{i\in S,\,j\in T}$ for its matrix of crossing
	coefficients.

	\subsection{Reduction to one crossing edge}
	
	We next reduce the normalized cut norm to the single-edge norm associated
	with any one crossing edge.  For
	$\theta\in\T$, define the single-edge matrix and its normalized norm by
	\[
	\mathcal K_R(\theta)
	:=
	\bigl(e(\theta rs)\bigr)_{r,s\in[R]},
	\qquad
	m_R(\theta)
	:=
	R^{-1}\norm{\mathcal K_R(\theta)}_{2\to2}.
	\]
	This is the cut operator associated with a cut having one vertex on each
	side.  Thus $\mathcal K_R(\theta)$ is an $R\times R$ matrix acting on
	$\ell^2([R])$, and its $r$th output coordinate is
	$\sum_{s\in[R]}e(\theta rs)h(s)$.  Its Hilbert--Schmidt norm is $R$, so
	$0\leq m_R(\theta)\leq1$, and complex conjugation gives
	$m_R(-\theta)=m_R(\theta)$.
	
	We shall use the standard block form of Schur's test: if
	$M=(M_{ab})$ is a $p\times q$ matrix of operators, then
	\begin{equation}\label{eq:block-schur}
		\norm{M}_{2\to2}
		\leq
		\sqrt{pq}\max_{a,b}\norm{M_{ab}}_{2\to2}.
	\end{equation}
	
	\begin{lemma}[Single-edge bound]
		\label{lem:single-edge-domination}
		Let $\Theta\in\T^{\abs S\abs T}$ and let $i\in S$, $j\in T$.  Then
		$m_R^{S,T}(\Theta)\leq m_R(\Theta_{ij}).$
	\end{lemma}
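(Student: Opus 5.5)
We want $\|\mathcal K_R^{S,T}(\Theta)\|_{2\to2}\le R^{K/2-1}\|\mathcal K_R(\Theta_{ij})\|_{2\to2}$, since then dividing by $R^{K/2}$ gives exactly $m_R^{S,T}(\Theta)\le m_R(\Theta_{ij})$. The plan is to freeze all variables other than $z_i$ and $z_j$ and view $\mathcal K_R^{S,T}(\Theta)$ as a block matrix. We write $S'=S\setminus\{i\}$, $T'=T\setminus\{j\}$, and split the row index $z_S=(z_i,z_{S'})$ and the column index $z_T=(z_j,z_{T'})$. For fixed $a=z_{S'}\in[R]^{S'}$ and $b=z_{T'}\in[R]^{T'}$, the $(a,b)$ block is the $R\times R$ matrix in $(z_i,z_j)$ with entries
\[
e\bigl(\Theta_{ij}z_iz_j\bigr)\,e\bigl(\lambda_{a,b}(z_i)\bigr)\,e\bigl(\mu_{a,b}(z_j)\bigr)\,e\bigl(c_{a,b}\bigr).
\]
Here $\lambda_{a,b}(z_i)=\sum_{j'\in T'}\Theta_{ij'}z_ib_{j'}$ collects the crossing terms from $i$ to $T'$, $\mu_{a,b}(z_j)=\sum_{i'\in S'}\Theta_{i'j}a_{i'}z_j$ collects those from $S'$ to $j$, and $c_{a,b}$ is the constant coming from the edges between $S'$ and $T'$.

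Each block is therefore $D_1\mathcal K_R(\Theta_{ij})D_2$ with $D_1,D_2$ unitary diagonal matrices, times a unimodular scalar. So every block has norm exactly $\|\mathcal K_R(\Theta_{ij})\|_{2\to2}$. The block matrix has $p=R^{|S|-1}$ block rows and $q=R^{|T|-1}$ block columns. Applying the block Schur bound \eqref{eq:block-schur} gives
\[
\|\mathcal K_R^{S,T}(\Theta)\|_{2\to2}\le \sqrt{R^{|S|-1}R^{|T|-1}}\;\|\mathcal K_R(\Theta_{ij})\|_{2\to2}=R^{K/2-1}\|\mathcal K_R(\Theta_{ij})\|_{2\to2},
\]
using $|S|+|T|=K$. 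This is exactly the claimed inequality.

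The only points that need care are that the phase really separates this way, i.e.\ no term couples $z_i$ with $z_j$ other than $\Theta_{ij}z_iz_j$, which is clear because the phase is bilinear across the cut; and the justification of \eqref{eq:block-schur}. For the latter, apply Cauchy--Schwarz to $\|Mh\|^2=\sum_a\|\sum_b M_{ab}h_b\|^2\le\sum_a q\sum_b\|M_{ab}\|^2\|h_b\|^2\le pq\max\|M_{ab}\|^2\|h\|^2$. I expect no real obstacle. If $S'$ or $T'$ is empty the argument still applies with a single block row or column.
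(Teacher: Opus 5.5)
Your proposal is correct and follows the paper's proof essentially verbatim. You freeze the variables indexed by $S\setminus\{i\}$ and $T\setminus\{j\}$, write each $R\times R$ block as a unimodular scalar times $D_1\mathcal K_R(\Theta_{ij})D_2$ with $D_1,D_2$ unitary and diagonal, and apply the block Schur bound \eqref{eq:block-schur} to the $R^{\abs S-1}\times R^{\abs T-1}$ block matrix; your Cauchy--Schwarz derivation of \eqref{eq:block-schur}, which the paper only cites as standard, is also correct.
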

	
	\begin{proof}
		Abbreviate $\mathcal K:=\mathcal K_R^{S,T}(\Theta)$ and separate the
		two coordinates belonging to the selected edge:
		\[
		\widehat z_S:=(z_k)_{k\in S\setminus\{i\}},
		\qquad
		\widehat z_T:=(z_\ell)_{\ell\in T\setminus\{j\}}.
		\]
		For fixed $\widehat z_S$ and $\widehat z_T$, define
		\[
		\mathcal K[\widehat z_S,\widehat z_T]
		:=
		\left(
		e\!\left(\mathcal Q^{S,T}(z_S,z_T)\right)
		\right)_{z_i,z_j\in[R]},
		\]
		where the coordinates contained in $\widehat z_S$ and
		$\widehat z_T$ are held fixed.  This is the $R\times R$ block of
		$\mathcal K$ in which $z_i$ indexes the rows and $z_j$ indexes the
		columns.
		
		Every row is specified by $(\widehat z_S,z_i)$ and every column by
		$(\widehat z_T,z_j)$.  Grouping them by the hatted tuples therefore
		gives the block matrix
		\[
		\mathcal K
		=
		\left(
		\mathcal K[\widehat z_S,\widehat z_T]
		\right)_{\substack{
				\widehat z_S\in[R]^{S\setminus\{i\}}\ \\
				\widehat z_T\in[R]^{T\setminus\{j\}}\ }}.
		\]
		Thus there are $R^{\abs S-1}$ block rows and
		$R^{\abs T-1}$ block columns, and every block is $R\times R$.
		
		The definition of
		$\mathcal Q^{S,T}$ gives
		
		\begin{equation}\label{phaseexpansion}
			\mathcal Q^{S,T}(z_S,z_T)
			=\Theta_{ij}z_iz_j
			+z_i\sum_{\ell\in T\setminus\{j\}}\Theta_{i\ell}z_\ell
			+z_j\sum_{k\in S\setminus\{i\}}\Theta_{kj}z_k+\sum_{\substack{k\in S\setminus\{i\}\\
					\ell\in T\setminus\{j\}}}
			\Theta_{k\ell}z_kz_\ell.
		\end{equation}
		
		For $t\in\T$, set
		\[
		D_R(t):=
		\begin{pmatrix}
			e(t)&&0\\
			&\ddots&\\
			0&&e(Rt)
		\end{pmatrix}.
		\]
		Consequently the block has the factorization
		\[
		\begin{aligned}
			\mathcal K[\widehat z_S,\widehat z_T]
			={}&
			e\!\left(
			\sum_{\substack{k\in S\setminus\{i\}\\
					\ell\in T\setminus\{j\}}}
			\Theta_{k\ell}z_kz_\ell
			\right)
			D_R\!\left(
			\sum_{\ell\in T\setminus\{j\}}\Theta_{i\ell}z_\ell
			\right)
			\mathcal K_R(\Theta_{ij})
			D_R\!\left(
			\sum_{k\in S\setminus\{i\}}\Theta_{kj}z_k
			\right).
		\end{aligned}
		\]
		The scalar factor supplies the last term in \ref{phaseexpansion}.  The left and right diagonal
		factors supply the terms linear in $z_i$ and $z_j$, respectively, while
		$\mathcal K_R(\Theta_{ij})$ supplies the term $\Theta_{ij}z_iz_j$. The scalar has modulus one and the
		two diagonal matrices are unitary, so every block satisfies
		\[
		\norm{\mathcal K[\widehat z_S,\widehat z_T]}_{2\to2}
		=
		\norm{\mathcal K_R(\Theta_{ij})}_{2\to2}.
		\]
		
		Applying \eqref{eq:block-schur} to the
		$R^{\abs S-1}\times R^{\abs T-1}$ block matrix yields
		\[
		\norm{\mathcal K}_{2\to2}
		\leq
		R^{(\abs S-1)/2}R^{(\abs T-1)/2}
		\norm{\mathcal K_R(\Theta_{ij})}_{2\to2}
		=R^{(K-2)/2}\norm{\mathcal K_R(\Theta_{ij})}_{2\to2}.
		\]
		Using the two normalizations now gives
		\[
		m_R^{S,T}(\Theta)
		=R^{-K/2}\norm{\mathcal K}_{2\to2}
		\leq R^{-1}\norm{\mathcal K_R(\Theta_{ij})}_{2\to2}
		=m_R(\Theta_{ij}).
		\]
	\end{proof}

	\subsection{A high-moment bound}
	
	We now prove the one-cut mean-value estimate stated in
	Lemma~\ref{lem:rectangular-moment}.  Its arithmetic input is the matrix
	count from Appendix~\ref{app:matrix-count}, which is derived from Kim's
	mean-value formula for primitive lattice flags.
	
	\begin{proof}[Proof of Lemma~\ref{lem:rectangular-moment}]
		Abbreviate
		$\mathcal K:=\mathcal K_R^{S,T}(\Theta).$
		The matrix $\mathcal K\mathcal K^*$ is Hermitian and positive semidefinite.
		If its eigenvalues are
		$\lambda_1,\ldots,\lambda_{R^{\abs S}}\geq0$, then
		\begin{equation}\label{tracereduction}
			\norm{\mathcal K}_{2\to2}^{2K+4}
			=
			\left(\max_a\lambda_a\right)^{K+2}
			\leq
			\sum_a\lambda_a^{K+2}
			=
			\operatorname{tr}\bigl((\mathcal K\mathcal K^*)^{K+2}\bigr).
		\end{equation}
		In this way we reduce the proof to analyzing
		$\operatorname{tr}\bigl((\mathcal K\mathcal K^*)^{K+2}\bigr)$.
		
		For $z_S,\widetilde z_S\in[R]^S$, ordinary matrix multiplication
		gives the $(z_S,\widetilde z_S)$-entry of
		$\mathcal K\mathcal K^*$ as
		\begin{align}
			(\mathcal K\mathcal K^*)_{z_S,\widetilde z_S}
			&=
			\sum_{z_T\in[R]^T}
			e\!\left(
			\sum_{i\in S,\,j\in T}\Theta_{ij}z_i z_j
			\right)
			\overline{
				e\!\left(
				\sum_{i\in S,\,j\in T}\Theta_{ij}\widetilde z_i z_j
				\right)}\notag\\
			&=
			\sum_{z_T\in[R]^T}
			e\!\left(
			\sum_{i\in S,\,j\in T}
			\Theta_{ij}(z_i-\widetilde z_i)z_j
			\right),\label{KKstarexpansion}
		\end{align}
		and expanding the trace gives
		\begin{equation}\label{traceexpansion}
			\operatorname{tr}\bigl((\mathcal K\mathcal K^*)^{K+2}\bigr)
			=
			\sum_{z_S^{(1)},\ldots,z_S^{(K+2)}\in[R]^S}
			(\mathcal K\mathcal K^*)_{z_S^{(1)},z_S^{(2)}}\cdots
			(\mathcal K\mathcal K^*)_{z_S^{(K+2)},z_S^{(1)}}.
		\end{equation}

		Combining \eqref{KKstarexpansion} with \eqref{traceexpansion} yields
		\begin{align*}
			&\operatorname{tr}\bigl((\mathcal K\mathcal K^*)^{K+2}\bigr)\\
			&=
			\sum_{\substack{z_S^{(1)},\ldots,z_S^{(K+2)}\\\in[R]^S}}
			\sum_{\substack{z_T^{(1)},\ldots,z_T^{(K+2)}\\\in[R]^T}}
			e\!\left(
			\sum_{i\in S,\,j\in T}\Theta_{ij}
			\left[
			\sum_{h=1}^{K+1}
			\bigl(z_i^{(h)}-z_i^{(h+1)}\bigr)z_j^{(h)}
			+
			\bigl(z_i^{(K+2)}-z_i^{(1)}\bigr)z_j^{(K+2)}
			\right]
			\right).
		\end{align*}
		
		Now integrate each $\Theta_{ij}$ over $\T$ and apply the orthogonality relation. The equations imposed by orthogonality are precisely the entries of the
		matrix equation
		\begin{equation}\label{matrixeq}
			\sum_{h=1}^{K+1}
			\bigl(z_S^{(h)}-z_S^{(h+1)}\bigr)
			\bigl(z_T^{(h)}\bigr)^{\mathsf T}
			+
			\bigl(z_S^{(K+2)}-z_S^{(1)}\bigr)
			\bigl(z_T^{(K+2)}\bigr)^{\mathsf T}
			=
			0.
		\end{equation}
		Consequently, the integral of the trace is the number of choices
		$z_S^{(h)}\in[R]^S$ and $z_T^{(h)}\in[R]^T$,
		$1\leq h\leq K+2$, satisfying this matrix equation.
		
		We next put this constraint into the form required by
		Lemma~\ref{lem:matrix-orthogonality}. The differences on the $S$-side
		telescope:
		\[
		\bigl(z_S^{(1)}-z_S^{(2)}\bigr)
		+\cdots+
		\bigl(z_S^{(K+2)}-z_S^{(1)}\bigr)
		=
		0.
		\]
		We may therefore subtract $z_T^{(K+2)}$ from every $T$-variable
		in \eqref{matrixeq}: the change in its left-hand side is the
		outer product of the zero vector above with
		$\bigl(z_T^{(K+2)}\bigr)^{\mathsf T}$. The final term then vanishes,
		and the matrix equation becomes
		\[
		\sum_{h=1}^{K+1}
		\bigl(z_S^{(h)}-z_S^{(h+1)}\bigr)
		\bigl(z_T^{(h)}-z_T^{(K+2)}\bigr)^{\mathsf T}
		=
		0.
		\]
		
		For $1\leq h\leq K+1$, set
		\[
		U_h:=z_S^{(h)}-z_S^{(h+1)}\in\Z^S,
		\qquad
		V_h:=z_T^{(h)}-z_T^{(K+2)}\in\Z^T,
		\]
		and place these vectors in the columns of
		\[
		U=(U_1\ \cdots\ U_{K+1})
		\in\Z^{\abs S\times(K+1)},
		\qquad
		V=(V_1\ \cdots\ V_{K+1})
		\in\Z^{\abs T\times(K+1)}.
		\]
		Since
		$UV^{\mathsf T} = \sum_{h=1}^{K+1}U_hV_h^{\mathsf T},$
		equation \eqref{matrixeq} is precisely
		$UV^{\mathsf T}=0.$
		
		Once $U$ and $V$ are fixed, the variables
		\[
		z_S^{(1)},\ldots,z_S^{(K+2)}
		\qquad\text{and}\qquad
		z_T^{(1)},\ldots,z_T^{(K+2)}
		\]
		are determined by the two base points
		$z_S^{(1)}\in[R]^S$ and $z_T^{(K+2)}\in[R]^T$. Indeed,
		\[
		z_S^{(h+1)}=z_S^{(h)}-U_h,
		\qquad
		z_T^{(h)}=z_T^{(K+2)}+V_h
		\qquad(1\leq h\leq K+1).
		\]
		Thus each pair $U,V$ arises from at most
		$R^{\abs S}R^{\abs T}=R^K$
		choices of these variables. Some choices of the two base points may
		reconstruct variables outside $[R]^S$ or $[R]^T$, which only
		decreases this number.
		
		All entries of $U$ and $V$ lie in $[-R,R]$. Moreover,
		balancedness gives
		$2\max(\abs S,\abs T)\leq K+1,$
		while
		$\abs S+\abs T=K<K+1.$
		Thus Lemma~\ref{lem:matrix-orthogonality} applies with $n=K+1$ and
		shows that the number of possible pairs $U,V$ is
		\[
		\ll_K
		R^{(K+1)(\abs S+\abs T)-2\abs S\abs T}
		=
		R^{K(K+1)-2\abs S\abs T}.
		\]
		Combining this with the $R^K$ choices of base points gives
		\begin{equation}\label{tracebound}
			\int_{\T^{\abs S\abs T}}
			\operatorname{tr}\bigl((\mathcal K\mathcal K^*)^{K+2}\bigr)\,d\Theta
			\ll_K
			R^{K(K+2)-2\abs S\abs T}.
		\end{equation}
		
		Finally, \eqref{tracereduction} yields 
		\[
		m_R^{S,T}(\Theta)^{2K+4}
		=
		R^{-K(K+2)}
		\norm{\mathcal K}_{2\to2}^{2K+4}
		\leq
		R^{-K(K+2)}
		\operatorname{tr}\bigl((\mathcal K\mathcal K^*)^{K+2}\bigr).
		\]
		Integrating this inequality and applying \eqref{tracebound}
		gives
		\[
		\int_{\T^{\abs S\abs T}}
		m_R^{S,T}(\Theta)^{2K+4}\,d\Theta
		\ll_K
		R^{-2\abs S\abs T},
		\]
		which is \eqref{eq:rectangular-moment}.
	\end{proof}
	
	\subsection{Balanced cuts and fractional H\"older}
	
	Recall that a cut $S\mid T$ is \emph{balanced} if
	\[
	S\sqcup T=\{0,\ldots,K-1\},
	\qquad
	\bigl|\abs S-\abs T\bigr|\leq1.
	\]
	Let $\mathscr B_K$ denote the set of all balanced cuts, with
	$S\mid T$ and $T\mid S$ regarded as the same cut.  Give every cut in
	$\mathscr B_K$ the common weight
	$w_K := L/(\lfloor K^2/4\rfloor\,\abs{\mathscr B_K}).$
	
	For a cut $S\mid T$, regard the edges crossing $S\mid T$ as a subset of
	$\mathcal E$.  A \emph{fractional cover} of $\mathcal E$ by such sets
	assigns a nonnegative weight to each cut so that the total weight of the
	cuts crossed by any fixed edge is at least one.  The following lemma
	shows that, for the weights above, this total is exactly one for every
	edge.
	
	\begin{lemma}\label{lem:balanced-cover}
		For every edge $e\in\mathcal E$,
		\begin{equation}\label{eq:fractional-cover}
			w_K\#\{S\mid T\in\mathscr B_K:e\text{ crosses }S\mid T\}
			=1.
		\end{equation}
	\end{lemma}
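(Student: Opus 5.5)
The plan is a double-counting argument over pairs (edge, balanced cut), using the symmetric group action on $\{0,\ldots,K-1\}$ to show that every edge is crossed by the same number of balanced cuts. First I note that every balanced cut $S\mid T$ has $\{\abs S,\abs T\}=\{\lfloor K/2\rfloor,\lceil K/2\rceil\}$, so the number of edges crossing it is $\abs S\abs T=\lfloor K/2\rfloor\lceil K/2\rceil=\lfloor K^2/4\rfloor$. This is independent of the cut, which is why the weight $w_K$ uses $\lfloor K^2/4\rfloor$. (It is also consistent with the introduction's notation $w_K=L/(\abs S\abs T\abs{\mathscr B_K})$.)

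Next I show that $c(e):=\#\{S\mid T\in\mathscr B_K: e\text{ crosses }S\mid T\}$ does not depend on $e$. The symmetric group $\mathfrak S_K$ acts on $\{0,\ldots,K-1\}$. This action preserves cardinalities, so it maps balanced cuts to balanced cuts, and it respects the identification of $S\mid T$ with $T\mid S$. For $\pi\in\mathfrak S_K$, the edge $\{i,j\}$ crosses $S\mid T$ if and only if $\{\pi(i),\pi(j)\}$ crosses $\pi S\mid\pi T$. The action is transitive on $2$-element subsets, hence on $\mathcal E$. Therefore $\pi$ induces a bijection between the cuts crossed by $e$ and those crossed by $\pi e$, and $c(e)=c$ is constant.

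Then I double count the incidences $\{(e,S\mid T): e\text{ crosses }S\mid T\}$. Summing over cuts gives $\abs{\mathscr B_K}\lfloor K^2/4\rfloor$. Summing over edges gives $Lc$. Hence $c=\lfloor K^2/4\rfloor\abs{\mathscr B_K}/L$, and so $w_Kc=1$, which is \eqref{eq:fractional-cover}.

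There is no real obstacle. The only point needing care is the convention that $S\mid T$ and $T\mid S$ are the same cut. When $K$ is even, a cut and its swap are both balanced and are identified. When $K$ is odd, the swap changes which side has the larger size, but the unordered pair is still a single element of $\mathscr B_K$. I would state explicitly that both the incidence count and the group action are taken on unordered cuts, so that no factor of $2$ appears in one count but not the other. As a sanity check, for $K=4$ this gives $\abs{\mathscr B_4}=3$, $L=6$, $\lfloor K^2/4\rfloor=4$, so $c=2$ and $w_4=1/2$, matching the introduction.
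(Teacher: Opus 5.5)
Your proposal is correct and follows essentially the same argument as the paper: use the transitive action of the symmetric group on edges to show the crossing count $c(e)$ is constant, then double count (edge, balanced cut) incidences to get $c = \lfloor K^2/4\rfloor\abs{\mathscr B_K}/L$, so that $w_Kc=1$. Your explicit remark that both counts and the group action are taken over unordered cuts is a harmless extra precaution that the paper leaves implicit.
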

	
	\begin{proof}
		Permuting the vertices preserves $\mathscr B_K$ and acts transitively
		on the edges of $\mathcal E$.  Hence every edge crosses the same number
		of balanced cuts.
		
		Count pairs consisting of a balanced cut and one of its crossing
		edges.  Choosing the cut first gives
		$\abs{\mathscr B_K}\lfloor K^2/4\rfloor$ pairs, since every balanced
		cut has $\lfloor K^2/4\rfloor$ crossing edges.  Choosing the edge first
		gives
		\[
		L\#\{S\mid T\in\mathscr B_K:e\text{ crosses }S\mid T\}
		\]
		pairs, since there are $L$ edges and the number of balanced cuts
		crossed by an edge is independent of the edge.  Therefore
		\[
		L\#\{S\mid T\in\mathscr B_K:e\text{ crosses }S\mid T\}
		=
		\abs{\mathscr B_K}\lfloor K^2/4\rfloor.
		\]
		Multiplying by $w_K/L$ proves \eqref{eq:fractional-cover}.
	\end{proof}
	
	\noindent\begin{minipage}{\textwidth}
		For $K=4$, the cover can be read directly from the three diagrams
		below.  The orange and blue vertices lie on the two sides of each cut.
		Every edge is internal in exactly one panel and crosses the cut in the
		other two.  Since $w_4=1/2$, every edge therefore has total crossing
		weight one.
		
		\begin{center}
			\begin{tikzpicture}
				\begin{scope}[shift={(-4.6,0)},scale=0.80,transform shape]
					\coordinate (a0) at (0,1.85);
					\coordinate (a1) at (-1.25,0.305);
					\coordinate (a2) at (1.25,0.305);
					\coordinate (a3) at (0,-0.61);
					\draw[crossinghidden] (a1) -- (a2);
					\draw[sinternal] (a0) -- (a1);
					\draw[crossing] (a0) -- (a2);
					\draw[crossing] (a0) -- (a3);
					\draw[crossing] (a1) -- (a3);
					\draw[tinternal] (a2) -- (a3);
					\node[svertex] at (a0) {$0$};
					\node[svertex] at (a1) {$1$};
					\node[tvertex] at (a2) {$2$};
					\node[tvertex] at (a3) {$3$};
					\node[font=\scriptsize] at (0,-1.16)
					{$\{0,1\}\mid\{2,3\}$};
				\end{scope}
				
				\begin{scope}[scale=0.80,transform shape]
					\coordinate (b0) at (0,1.85);
					\coordinate (b1) at (-1.25,0.305);
					\coordinate (b2) at (1.25,0.305);
					\coordinate (b3) at (0,-0.61);
					\draw[crossinghidden] (b1) -- (b2);
					\draw[crossing] (b0) -- (b1);
					\draw[sinternal] (b0) -- (b2);
					\draw[crossing] (b0) -- (b3);
					\draw[tinternal] (b1) -- (b3);
					\draw[crossing] (b2) -- (b3);
					\node[svertex] at (b0) {$0$};
					\node[tvertex] at (b1) {$1$};
					\node[svertex] at (b2) {$2$};
					\node[tvertex] at (b3) {$3$};
					\node[font=\scriptsize] at (0,-1.16)
					{$\{0,2\}\mid\{1,3\}$};
				\end{scope}
				
				\begin{scope}[shift={(4.6,0)},scale=0.80,transform shape]
					\coordinate (c0) at (0,1.85);
					\coordinate (c1) at (-1.25,0.305);
					\coordinate (c2) at (1.25,0.305);
					\coordinate (c3) at (0,-0.61);
					\draw[draw=crossblue,line width=1.35pt,
					dash pattern=on 2.5pt off 1.6pt,opacity=0.72] (c1) -- (c2);
					\draw[crossing] (c0) -- (c1);
					\draw[crossing] (c0) -- (c2);
					\draw[sinternal] (c0) -- (c3);
					\draw[crossing] (c1) -- (c3);
					\draw[crossing] (c2) -- (c3);
					\node[svertex] at (c0) {$0$};
					\node[tvertex] at (c1) {$1$};
					\node[tvertex] at (c2) {$2$};
					\node[svertex] at (c3) {$3$};
					\node[font=\scriptsize] at (0,-1.16)
					{$\{0,3\}\mid\{1,2\}$};
				\end{scope}
			\end{tikzpicture}
		\end{center}
	\end{minipage}
	\par\medskip
	The fractional cover identity
	\eqref{eq:fractional-cover} is exactly the hypothesis in the following
	special case of Finner's fractional form of H\"older's inequality
	\cite{Finner}. 
	
	\begin{lemma}[Fractional H\"older inequality]
		\label{lem:fractional-holder}
		Let $\mathcal E$ be a finite set.  For each $e\in\mathcal E$, let
		$(X_e,\mu_e)$ be a $\sigma$-finite measure space, and let
		$\mu:=\bigotimes_{e\in\mathcal E}\mu_e$
		be the product measure.  Let $J$ be a positive integer.  For each
		$1\leq j\leq J$, choose a nonempty subset
		$\mathcal E_j\subseteq\mathcal E$, a weight $c_j>0$, and a nonnegative
		integrable function
		$f_j:\prod_{e\in\mathcal E_j}X_e\longrightarrow[0,\infty).$
		Denote the product measure on the domain of $f_j$ by
		$\mu_j:=\bigotimes_{e\in\mathcal E_j}\mu_e.$
		Suppose that for every $e\in \mathcal E$, we have 
		\[
		\sum_{\substack{1\leq j\leq J\\e\in\mathcal E_j}}c_j=1.
		\]
		Then, writing $x=(x_e)_{e\in\mathcal E}$,
		\[
		\int
		\prod_{j=1}^J
		f_j\bigl((x_e)_{e\in\mathcal E_j}\bigr)^{c_j}\,d\mu(x)
		\leq
		\prod_{j=1}^J
		\left(\int f_j\,d\mu_j\right)^{c_j}.
		\]
	\end{lemma}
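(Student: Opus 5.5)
The natural route is to derive the statement from the generalized H\"older inequality on a single factor, plus induction on the number of coordinates $\abs{\mathcal E}$. We prove the inequality by induction on $\abs{\mathcal E}$. First, by monotone convergence, applied to truncations $\min(f_j,M)\1_{B_n}$ with $B_n$ increasing sets of finite measure exhausting the product space (this is where $\sigma$-finiteness enters), we may assume every $f_j$ is bounded with finite integral. That makes all later applications of Fubini--Tonelli legitimate. Tonelli applies anyway since everything is nonnegative.

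For the base case $\abs{\mathcal E}=1$, every $\mathcal E_j$ equals $\mathcal E=\{e\}$, and the hypothesis reads $\sum_j c_j=1$. The claim is then exactly the generalized H\"older inequality $\int\prod_j f_j^{c_j}\le\prod_j\norm{f_j^{c_j}}_{1/c_j}$, with exponents $p_j=1/c_j$ satisfying $\sum 1/p_j=1$.

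For the inductive step, fix $e_0\in\mathcal E$ and write $x=(x_{e_0},x')$. Split the indices into $J_0=\{j:e_0\in\mathcal E_j\}$ and $J_1=\{j:e_0\notin\mathcal E_j\}$. Functions with $j\in J_1$ do not depend on $x_{e_0}$. Integrating first in $x_{e_0}$ and applying H\"older in that variable to the factors with $j\in J_0$ gives
\[
\int\prod_{j\in J_0}f_j^{c_j}\,d\mu_{e_0}(x_{e_0})\le\prod_{j\in J_0}g_j(x')^{c_j},
\qquad g_j:=\int f_j\,d\mu_{e_0},
\]
using $\sum_{j\in J_0}c_j=1$. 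Each $g_j$ is a function of the coordinates in $\mathcal E_j\setminus\{e_0\}$.

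The remaining integral over $\mathcal E\setminus\{e_0\}$ involves the system $\{(\mathcal E_j\setminus\{e_0\},c_j,g_j)\}_{j\in J_0}\cup\{(\mathcal E_j,c_j,f_j)\}_{j\in J_1}$. For $e\neq e_0$ the covering sums are unchanged, so the induction hypothesis applies. One detail needs care: if some $\mathcal E_j=\{e_0\}$, then $g_j$ is a constant. We simply pull $g_j^{c_j}$ out of the integral instead of treating it as a factor, which matches the requirement that subsets be nonempty. Finally, $\int g_j\,d\mu_{\mathcal E_j\setminus\{e_0\}}=\int f_j\,d\mu_j$ by Tonelli, which closes the induction.

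The only delicate points are the bookkeeping of degenerate sets and the measure-theoretic justification. I expect neither to be a serious obstacle; the H\"older step in one variable is the heart of the argument.
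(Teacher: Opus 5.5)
Your proof is correct, and it supplies more than the paper does. The paper does not prove the lemma. It cites Finner's Theorem~2.1, applied with exponents $p_j=1/c_j$ to $h_j=f_j^{c_j}$, so that $\norm{h_j}_{p_j}=(\int f_j\,d\mu_j)^{c_j}$ and the covering identity becomes Finner's hypothesis $\sum_{j:e\in\mathcal E_j}1/p_j=1$.

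You instead derive the inequality from the ordinary generalized H\"older inequality by induction on $\abs{\mathcal E}$, integrating out one coordinate at a time. This is the classical proof of Finner's theorem. It also makes the role of the covering identity transparent: the identity at $e_0$ is used exactly once, when $e_0$ is integrated out. There it gives $\sum_{j\in J_0}c_j=1$, hence $c_j\le1$, so the exponents $1/c_j$ are admissible. The identities at the other coordinates pass unchanged to the smaller system. The citation buys brevity and access to Finner's more general formulation; your route buys a self-contained argument.

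Two minor bookkeeping points:
\begin{enumerate}
\item The preliminary truncation is superfluous. The $f_j$ are integrable by hypothesis, and Tonelli already handles nonnegative measurable functions. That is also where $\sigma$-finiteness is really used (existence of the product measure and the iterated integration), not in the truncation.
\item Each $g_j$ is finite only almost everywhere. Redefine it on a null set, or allow $[0,\infty]$-valued functions, which Tonelli and H\"older both tolerate. Do this before applying the induction hypothesis, since that hypothesis is stated for $[0,\infty)$-valued functions.
\end{enumerate}
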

	This is \cite[Theorem~2.1]{Finner}, applied with exponents
	$1/c_j$ to the functions $f_j^{c_j}$

	\section{The minor arcs}
	\label{sec:minor arcs}
	
	We begin the section by decomposing $\T^L$ into major and minor arcs.
	For $2\leq Q\leq R$, define the one-dimensional major arcs by
	\begin{equation}\label{eq:one-dimensional-major arcs}
		\mathfrak M_Q^{(1)}
		:=
		\bigcup_{q\leq Q}
		\ \bigcup_{\substack{a\in\Z/q\Z\\(a,q)=1}}
		\left\{\xi\in\T:
		\norm{\xi-a/q}_{\T}
		\leq\frac{Q}{qR^2}\right\}.
	\end{equation}
	For $r\geq1$, put
	\[
	\mathfrak M_Q^{(r)}:=\bigl(\mathfrak M_Q^{(1)}\bigr)^r,
	\qquad
	\mathfrak m_Q^{(r)}:=\T^r\setminus\mathfrak M_Q^{(r)}.
	\]
	Since the scalar coefficients arising from the cut operators are
	$-2\beta_e$, define
	\[
	\mathfrak M_Q
	:=
	\{\beta\in\T^L:-2\beta\in\mathfrak M_Q^{(L)}\},
	\qquad
	\mathfrak m_Q:=\T^L\setminus\mathfrak M_Q.
	\]
	Thus
	\begin{equation}\label{eq:minor-union}
		\mathfrak m_Q
		=
		\bigcup_{e\in\mathcal E}
		\{\beta\in\T^L:-2\beta_e\in\mathfrak m_Q^{(1)}\}.
	\end{equation}
	
	The goal of the section is to prove the following minor arc estimate.
	\begin{proposition}[minor arc estimate]\label{prop:minor arcs}
		Assume $2\leq Q\leq R$ and $d\geq4K+4$.  There is
		$\gamma=\gamma(K,d)>0$ such that, for all functions
		$F_0,\ldots,F_{K-1}:\Z^d\to\mathbb C$ supported on $[N]^d$,
		\begin{align}
			\abs{\Lambda_N^{\mathfrak m_Q}(F_0,\ldots,F_{K-1})}
			&\ll_{\sigma,\Delta,d}
			Q^{-\gamma}R^{Kd/2-2L}
			\prod_{i=0}^{K-1}\norm{F_i}_{\ell^2(\Z^d)}.
			\label{eq:global-minor}
		\end{align}
	\end{proposition}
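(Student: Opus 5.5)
We bound the integrand pointwise by the cut-operator geometric mean and then integrate over the minor arcs. Start from \eqref{eq:arc-restricted-form} and \eqref{eq:scale-fourier-bound}, which give
\[
\abs{\Lambda_N^{\mathfrak m_Q}}\leq\int_{\mathfrak m_Q}\abs{\mathcal S_\beta(F_0,\ldots,F_{K-1})}\,d\beta.
\]
For each balanced cut $S\mid T$, apply Lemma~\ref{lem:cut-cauchy-schwarz} with $X_i=[N]^d$ and counting measure. The crossing phase is $\mathcal Q^{S,T}_\beta=-2\sum\beta_{ij}z_i\cdot z_j$. Lemma~\ref{lem:cut-tensorization} then gives
\[
\abs{\mathcal S_\beta}\leq N^{Kd/2}m_N^{S,T}(-2\beta_{S,T})^d\prod_i\norm{F_i}_2 .
\]
Take the weighted geometric mean over $\mathscr B_K$ with exponents $1/\abs{\mathscr B_K}$. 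Since $d/\abs{\mathscr B_K}=2\tau w_K$ with $\tau=d\lfloor K^2/4\rfloor/(2L)$, this yields \eqref{eq:intro-geometric-cut-bound}. It therefore suffices to prove
\[
\int_{\mathfrak m_Q}\prod_{S\mid T}m_N^{S,T}(-2\beta_{S,T})^{2\tau w_K}\,d\beta\ll Q^{-\gamma}N^{-2L}.
\]
Since $R\asymp_\sigma N$, we can then replace $N^{Kd/2-2L}$ by $R^{Kd/2-2L}$ at a cost depending only on $\sigma$.

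Next we decompose the minor arcs. By \eqref{eq:minor-union}, $\mathfrak m_Q$ is the union over $e\in\mathcal E$ of the sets where $-2\beta_e\in\mathfrak m_Q^{(1)}$, so it suffices to treat one fixed edge $e$. If $-2\beta_e\in\mathfrak m_Q^{(1)}$ at scale $R$, then $-2\beta_e$ also has no approximation at scale $N$, because $Q/(qN^2)\leq Q/(qR^2)$. Lemma~\ref{lem:scalar-minor} at scale $N$ (we need $Q\leq N$, which holds) then gives $m_N(-2\beta_e)\ll Q^{-1/2}$. For every cut crossing $e$, Lemma~\ref{lem:single-edge-domination} gives $m_N^{S,T}(-2\beta_{S,T})\ll Q^{-1/2}$. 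Write $\gamma_0:=\tau-(K+2)$. The hypothesis $d\geq4K+4$ gives
\[
\tau\geq\frac{(4K+4)\lfloor K^2/4\rfloor}{K(K-1)}>K+2,
\]
because $\lfloor K^2/4\rfloor/\binom K2\geq (K-1)/(2K)\cdot\ldots$; more precisely, $\lfloor K^2/4\rfloor\geq (K^2-1)/4$ gives
\[
\tau\geq \frac{(K+1)^2}{K}>K+2 ,
\]
so $\gamma_0>0$.

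On the $e$-piece, split each factor as
\[
m^{2\tau w_K}=m^{(2K+4)w_K}\,m^{2\gamma_0 w_K}.
\]
For cuts crossing $e$, bound the second part by $(CQ^{-1/2})^{2\gamma_0w_K}$. For the other cuts, bound it by $1$. Since $w_K\#\{\text{cuts crossing }e\}=1$ by Lemma~\ref{lem:balanced-cover}, these second parts multiply to $\ll Q^{-\gamma_0}$. The remaining product has exponent $(2K+4)w_K$ in each factor. Enlarge the domain to $\T^L$ and apply Lemma~\ref{lem:fractional-holder}. Take the coordinates $\beta_e\in\T$ as the measure spaces, the crossing edge sets as the $\mathcal E_j$, $c_j=w_K$, and $f_j=m_N^{S,T}(-2\beta_{S,T})^{2K+4}$.

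There is a subtlety here: the map $\beta\mapsto-2\beta$ is two-to-one on $\T$ but measure preserving, so $\int_{\T^{\abs S\abs T}}f_j=\int m_N^{S,T}(\Theta)^{2K+4}d\Theta$. Lemma~\ref{lem:rectangular-moment} bounds this by $\ll N^{-2\abs S\abs T}$. Raising to the powers $w_K$ and multiplying gives $N^{-2w_K\abs{\mathscr B_K}\lfloor K^2/4\rfloor}=N^{-2L}$. Summing over the $L$ edges gives the claim with $\gamma=\gamma_0$.

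The main obstacle is Lemma~\ref{lem:scalar-minor}, which is stated earlier but whose proof is not yet given. If it must be supplied here, the plan is as follows. Write $\norm{\mathcal K_N(\xi)h}_2^2=\sum_r\abs{\sum_s h(s)e(\xi rs)}^2$. Apply the large sieve to the points $\{\xi r\}_{r\leq N}$ after using Dirichlet's theorem to find an approximation $\abs{\xi-a/q}\leq 1/(qN^2/Q)$ with $q\leq N^2/Q$. Splitting $r$ into blocks, the points $\xi r$ are well spaced at scale $\gg\min(1/q,q/N^2)$ up to multiplicity. The minor-arc hypothesis forces $q>Q$ or $\abs{\xi-a/q}>Q/(qN^2)$, and this yields $\norm{\mathcal K_N}^2\ll N^2/Q+N\log$-type bounds, hence $m_N^2\ll Q^{-1}$. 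Beyond that, the only care needed is the bookkeeping of the scales $R$ versus $N$ and of the change of variables $-2\beta$.
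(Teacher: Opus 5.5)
Your proposal is correct and follows the paper's proof: the cut reduction with a geometric mean over balanced cuts, the union over edges, single-edge domination together with the Weyl bound at scale $N$, Finner's inequality, and the high-moment lemma. The only differences are cosmetic: you pull the $Q^{-\gamma}$ gain out pointwise before applying Finner with exponent $2K+4$, whereas the paper keeps exponent $2\tau$ and places $\1_{\mathfrak m_Q^{(1)}}(-2\beta_e)$ inside the crossed-cut functions; and you may simply cite Lemma~\ref{lem:scalar-minor} (the paper proves it separately via the large sieve on blocks of length at most $q/8$), so your loose sketch of it is unnecessary.
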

	
	\subsection{Applying the cut reduction}
	
	Since each $F_i$ is supported on $[N]^d$, the sum
	$\mathcal S_\beta$ may be taken over $([N]^d)^K$.  For a cut
	$S\mid T$, take $X_i=[N]^d$ with counting measure and set $f_i=F_i$.
	The crossing part of the phase is
	\[
	\mathcal Q_\beta^{S,T}(z_S,z_T)
	=
	-2\sum_{i\in S,\,j\in T}\beta_{ij}z_i\cdot z_j,
	\]
	so the cut operator in Lemma~\ref{lem:cut-cauchy-schwarz} is exactly
	$\mathcal K_{N,d}^{S,T}(-2\beta_{S,T})$.  That lemma therefore gives
	\[
	\abs{\mathcal S_\beta}
	\leq
	\norm{\mathcal K_{N,d}^{S,T}(-2\beta_{S,T})}_{2\to2}
	\prod_{i=0}^{K-1}\norm{F_i}_{\ell^2(\Z^d)}.
	\]
	
	Applying Lemma~\ref{lem:cut-tensorization} now gives the required
	cut-operator estimate directly.
	
	\begin{proposition}[Cut-operator estimate]
		\label{prop:cut-operator-estimate}
		For every $\beta\in\T^L$, cut $S\mid T$, and functions
		$F_0,\ldots,F_{K-1}:\Z^d\to\mathbb C$ supported on $[N]^d$,
		\begin{equation}\label{eq:cut-operator-bound}
			\abs{\mathcal S_\beta}
			\leq
			N^{Kd/2}
			m_N^{S,T}\bigl(-2\beta_{S,T}\bigr)^d
			\prod_{i=0}^{K-1}\norm{F_i}_{\ell^2(\Z^d)}.
		\end{equation}
	\end{proposition}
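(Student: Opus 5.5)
The plan is to combine the two results just established, Lemma~\ref{lem:cut-cauchy-schwarz} and Lemma~\ref{lem:cut-tensorization}, with the phase decomposition \eqref{eq:phase-across-cut}.

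\emph{Step 1 (setting up the cut reduction).} Fix $\beta$ and the cut $S\mid T$. Since each $F_i$ is supported on $[N]^d$, we may write $\mathcal S_\beta$ as a sum over $([N]^d)^K$. Take $X_i=[N]^d$ with counting measure and $f_i=F_i$. By \eqref{eq:phase-across-cut}, the phase splits as $\mathcal Q_\beta^S(z_S)+\mathcal Q_\beta^{S,T}(z_S,z_T)+\mathcal Q_\beta^T(z_T)$. The one-sided pieces play the roles of $\mathcal Q^S$ and $\mathcal Q^T$ in Lemma~\ref{lem:cut-cauchy-schwarz}. The crossing piece is $-2\sum_{i\in S,j\in T}\beta_{ij}z_i\cdot z_j$, so the associated cut operator is exactly $\mathcal K_{N,d}^{S,T}(-2\beta_{S,T})$, as defined in Section~\ref{discretecut} with $R$ replaced by $N$ and $\Theta=-2\beta_{S,T}$.

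\emph{Step 2 (applying the two lemmas).} Lemma~\ref{lem:cut-cauchy-schwarz} then gives
\[
\abs{\mathcal S_\beta}\leq\norm{\mathcal K_{N,d}^{S,T}(-2\beta_{S,T})}_{2\to2}\prod_{i=0}^{K-1}\norm{F_i}_{\ell^2(\Z^d)},
\]
where the $L^2([N]^d)$ norm agrees with the $\ell^2(\Z^d)$ norm because of the support assumption. Lemma~\ref{lem:cut-tensorization}, applied at scale $N$, identifies this operator norm as $N^{Kd/2}m_N^{S,T}(-2\beta_{S,T})^d$. Substituting yields \eqref{eq:cut-operator-bound}.

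The only point requiring any care is the bookkeeping in Step 1: checking that the crossing kernel matches the tensorized operator with frequencies $-2\beta_{S,T}$ read in $\T$, and that the factor $-2$ is absorbed into $\Theta$ rather than into the normalization. There is no genuine obstacle; the statement is a direct composition of the two lemmas.
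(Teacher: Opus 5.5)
Your proposal is correct and matches the paper's proof exactly: with $X_i=[N]^d$ under counting measure, Lemma~\ref{lem:cut-cauchy-schwarz} applied to the decomposition \eqref{eq:phase-across-cut} gives the bound with $\norm{\mathcal K_{N,d}^{S,T}(-2\beta_{S,T})}_{2\to2}$. Lemma~\ref{lem:cut-tensorization} at scale $N$ then converts that norm into $N^{Kd/2}m_N^{S,T}(-2\beta_{S,T})^d$.
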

	\subsection{The large-sieve estimate}

	We now prove the scalar Weyl estimate stated in
	Lemma~\ref{lem:scalar-minor} by applying the analytic large sieve.
	
	\begin{proof}[Proof of Lemma~\ref{lem:scalar-minor}]
		For the proof, write $P:=R$ and $\theta:=\xi$.
		Dirichlet's approximation theorem gives coprime integers $a,q$ such
		that
		\[
		1\leq q\leq\frac{P^2}{Q},
		\qquad
		\norm{\theta-a/q}_{\T}
		\leq\frac{Q}{qP^2}.
		\]
		If $q\leq Q$, this contradicts the hypothesis of the lemma. Therefore
		$Q<q\leq P^2/Q.$
		The upper bound on $q$ also gives
		$\norm{\theta-a/q}_{\T} \leq Q/(qP^2) \leq 1/q^2.$
		
		If $q<8$, then $Q<8$, and the conclusion follows immediately from
		$m_P(\theta)^2\leq 1\leq 8Q^{-1}.$
		We may therefore suppose that $q\geq8$.
		
		Partition $[P]$ into $j\ll 1+P/q$ intervals
		$I_1,\ldots,I_j$, each of diameter at most $q/8$. Thus, for every
		$1\leq r\leq j$,
		\[
		|u-u'|\leq\frac{q}{8}
		\qquad (u,u'\in I_r).
		\]
		Fix one such interval $I_r$. Suppose $u,u'\in I_r$ are distinct.
		Since $(a,q)=1$ and $|u-u'|\leq q/8<q$, the integer
		$a(u-u')$ is not divisible by $q$, so
		$\left\|a(u-u')/q\right\|_{\T}\geq 1/q.$
		Moreover,
		\[
		\abs{u-u'}\norm{\theta-a/q}_{\T}
		\leq
		\frac q8\cdot\frac1{q^2}
		=
		\frac1{8q}.
		\]
		It follows from the reverse triangle inequality that
		\[
		\begin{aligned}
			\|\theta u-\theta u'\|_{\T}
			&=\|\theta(u-u')\|_{\T}\\
			&\geq
			\left\|\frac{a(u-u')}{q}\right\|_{\T}
			-
			\left\|(u-u')\left(\theta-\frac aq\right)\right\|_{\T}\\
			&\geq
			\left\|\frac{a(u-u')}{q}\right\|_{\T}
			-
			|u-u'|\left\|\theta-\frac aq\right\|_{\T}\\
			&\geq \frac1q-\frac1{8q}
			=\frac7{8q}.
		\end{aligned}
		\]
		Thus the points $\{\theta u:u\in I_r\}$ are pairwise separated in
		$\T$ by at least $7/(8q)$.
		
		Applying the analytic large sieve
		\cite[Theorem~1, equation~(1.4)]{MontgomeryVaughan} to the
		trigonometric polynomial
		$\sum_{s\in[P]}h(s)e(sx)$
		at the points $\{\theta u:u\in I_r\}$ and using their separation
		yields
		\[
		\sum_{u\in I_r}
		\bigg|
		\sum_{s\in[P]}e(\theta us)h(s)
		\bigg|^2
		\leq
		\left(P+\frac{8q}{7}\right)
		\|h\|_{\ell^2([P])}^2.
		\]
		Summing this inequality over $r\in[j]$ and using the fact that the
		intervals $I_r$ partition $[P]$, we obtain
		\[
		\sum_{u\in[P]}
		\bigg|
		\sum_{s\in[P]}e(\theta us)h(s)
		\bigg|^2
		\ll
		\left(1+\frac Pq\right)(P+q)\|h\|_{\ell^2([P])}^2.
		\]
		Taking the supremum over $h\in\ell^2([P])$ with
		$\|h\|_{\ell^2([P])}\leq1$ yields
		\[
		\|\mathcal K_P(\theta)\|_{2\to2}^2
		\ll
		\left(1+\frac Pq\right)(P+q).
		\]
		Since
		$m_P(\theta) = P^{-1}\norm{\mathcal K_P(\theta)}_{2\to2},$
		we conclude that
		\[
		m_P(\theta)^2
		\ll
		P^{-2}\left(1+\frac Pq\right)(P+q)
		\ll
		q^{-1}+P^{-1}+qP^{-2}
		\ll Q^{-1}.
		\]
		Here we used
		\[
		q>Q,\qquad Q\leq P,\qquad q\leq\frac{P^2}{Q}.
		\]
	\end{proof}
	
	\subsection{Proof of the minor arc estimate}
	We now prove the minor arc estimate by combining fractional H\"older,
	the large-sieve estimate, and the high moment estimate.
	
	\begin{proof}[Proof of Proposition~\ref{prop:minor arcs}]
		Put
		$\tau := d\lfloor K^2/4\rfloor/(2L).$
		Since
		$2\tau w_K=d/\abs{\mathscr B_K},$
		taking the geometric mean of
		Proposition~\ref{prop:cut-operator-estimate} over all balanced cuts
		gives
		\begin{align}
			\abs{\mathcal S_\beta}
			\leq{}&
			N^{Kd/2}
			\prod_{S\mid T\in\mathscr B_K}
			m_N^{S,T}\bigl(-2\beta_{S,T}\bigr)^{2\tau w_K}
			\prod_{i=0}^{K-1}\norm{F_i}_{\ell^2(\Z^d)}.
			\label{eq:geometric-cut-bound}
		\end{align}
		The dimension hypothesis gives
		\[
		\tau
		\geq
		\frac{(4K+4)(K^2-1)}{8L}
		=
		\frac{(K+1)^2}{K}
		>
		K+2.
		\]
		Set
		$\gamma:=\tau-(K+2)>0.$
		In view of \eqref{eq:arc-restricted-form},
		\eqref{eq:scale-fourier-bound}, and \eqref{eq:geometric-cut-bound},
		it remains to prove
		\begin{equation}\label{eq:minor-cut-integral}
			\int_{\mathfrak m_Q}
			\prod_{S\mid T\in\mathscr B_K}
			m_N^{S,T}\bigl(-2\beta_{S,T}\bigr)^{2\tau w_K}\,d\beta
			\ll_{K,d}
			Q^{-\gamma}N^{-2L}.
		\end{equation}
		
		For each $e\in\mathcal E$, put
		\[
		I_e
		:=
		\int_{\T^L}
		\1_{\mathfrak m_Q^{(1)}}(-2\beta_e)
		\prod_{S\mid T\in\mathscr B_K}
		m_N^{S,T}\bigl(-2\beta_{S,T}\bigr)^{2\tau w_K}
		\,d\beta.
		\]
		By \eqref{eq:minor-union} and nonnegativity,
		\[
		\int_{\mathfrak m_Q}
		\prod_{S\mid T\in\mathscr B_K}
		m_N^{S,T}\bigl(-2\beta_{S,T}\bigr)^{2\tau w_K}
		\,d\beta
		\leq
		\sum_{e\in\mathcal E}I_e.
		\]
		It therefore suffices to prove
		\begin{equation}\label{Iegoal}
			I_e\ll_{K,d}Q^{-\gamma}N^{-2L}
		\end{equation}
		for every $e\in\mathcal E$.
		
		Fix $e\in\mathcal E$. In applying
		Lemma~\ref{lem:fractional-holder}, take $J=\abs{\mathscr B_K}$ and
		label the indices $1,\ldots,J$ by the balanced cuts
		$S\mid T\in\mathscr B_K$. Thus, for the index $j$ corresponding to
		$S\mid T$, the set $\mathcal E_j$ in that lemma is the set of edges
		crossing $S\mid T$, and the weight $c_j$ is $w_K$. We write
		$f_{S,T}$ for the corresponding function $f_j$.
		
		For each balanced cut $S\mid T$, define
		\[
		f_{S,T}(\beta_{S,T})
		:=
		\begin{cases}
			\displaystyle
			\1_{\mathfrak m_Q^{(1)}}(-2\beta_e)
			m_N^{S,T}\bigl(-2\beta_{S,T}\bigr)^{2\tau},
			& e\text{ crosses }S\mid T,\\[6pt]
			\displaystyle
			m_N^{S,T}\bigl(-2\beta_{S,T}\bigr)^{2\tau},
			& e\text{ does not cross }S\mid T.
		\end{cases}
		\]
		When $e$ crosses $S\mid T$, the coordinate $\beta_e$ is one of the
		coordinates in $\beta_{S,T}$, so the first function is indeed defined
		on the product of the coordinate spaces indexed by $\mathcal E_j$.
		
		For every $e'\in\mathcal E$, Lemma~\ref{lem:balanced-cover} gives
		\[
		\sum_{\substack{S\mid T\in\mathscr B_K\\
				e'\text{ crosses }S\mid T}}
		w_K
		=
		1,
		\]
		which is exactly the hypothesis of
		Lemma~\ref{lem:fractional-holder}. Moreover,
		\begin{align*}
			\prod_{S\mid T\in\mathscr B_K}
			f_{S,T}(\beta_{S,T})^{w_K}
			={}&
			\left(
			\prod_{\substack{S\mid T\in\mathscr B_K\\
					e\text{ crosses }S\mid T}}
			\1_{\mathfrak m_Q^{(1)}}(-2\beta_e)^{w_K}
			\right)
			\prod_{S\mid T\in\mathscr B_K}
			m_N^{S,T}\bigl(-2\beta_{S,T}\bigr)^{2\tau w_K}\\
			={}&
			\1_{\mathfrak m_Q^{(1)}}(-2\beta_e)
			\prod_{S\mid T\in\mathscr B_K}
			m_N^{S,T}\bigl(-2\beta_{S,T}\bigr)^{2\tau w_K},
		\end{align*}
		where the last equality follows from
		\eqref{eq:fractional-cover}. Hence
		\[
		I_e
		=
		\int_{\T^L}
		\prod_{S\mid T\in\mathscr B_K}
		f_{S,T}(\beta_{S,T})^{w_K}\,d\beta.
		\]
		Lemma~\ref{lem:fractional-holder} therefore gives
		\begin{align}
			I_e
			\leq{}&
			\prod_{\substack{S\mid T\in\mathscr B_K\\
					e\text{ crosses }S\mid T}}
			\left(
			\int_{\T^{\abs S\abs T}}
			\1_{\mathfrak m_Q^{(1)}}(-2\beta_e)
			m_N^{S,T}\bigl(-2\beta_{S,T}\bigr)^{2\tau}
			\,d\beta_{S,T}
			\right)^{w_K}\notag\\
			&\times
			\prod_{\substack{S\mid T\in\mathscr B_K\\
					e\text{ does not cross }S\mid T}}
			\left(
			\int_{\T^{\abs S\abs T}}
			m_N^{S,T}\bigl(-2\beta_{S,T}\bigr)^{2\tau}
			\,d\beta_{S,T}
			\right)^{w_K}.
			\label{fractionalholderestimate}
		\end{align}
		
		We now estimate the integrals on the right. Fix a balanced cut
		$S\mid T$. The coordinatewise map
		\[
		(\beta_{ij})_{i\in S,j\in T}
		\longmapsto
		(-2\beta_{ij})_{i\in S,j\in T}
		\]
		preserves normalized Haar measure on $\mathbb{T}^{|S||T|}$. Hence
		Lemma~\ref{lem:rectangular-moment} gives
		\begin{equation}\label{eq:cut-moment-in-beta}
			\int_{\T^{\abs S\abs T}}
			m_N^{S,T}\bigl(-2\beta_{S,T}\bigr)^{2K+4}
			\,d\beta_{S,T}
			\ll_K N^{-2\abs S\abs T}.
		\end{equation}
		Since
		$0\leq m_N^{S,T}\bigl(-2\beta_{S,T}\bigr)\leq1$
		and $2\tau>2K+4$, \eqref{eq:cut-moment-in-beta} gives
		\begin{equation}\label{eq:uncrossed-cut-integral}
			\int_{\T^{\abs S\abs T}}
			m_N^{S,T}\bigl(-2\beta_{S,T}\bigr)^{2\tau}
			\,d\beta_{S,T}
			\ll_K N^{-2\abs S\abs T}.
		\end{equation}
		
		Suppose that $e$ crosses $S\mid T$. Whenever
		$-2\beta_e\in\mathfrak m_Q^{(1)}$, the definition of
		$\mathfrak m_Q^{(1)}$ and the inequality $N\geq R$ show that
		$-2\beta_e$ satisfies the hypothesis of
		Lemma~\ref{lem:scalar-minor} with $P=N$; indeed,
		$Q/(qN^2)\leq Q/(qR^2).$
		Therefore Lemma~\ref{lem:single-edge-domination} gives
		\[
		m_N^{S,T}\bigl(-2\beta_{S,T}\bigr)
		\leq m_N(-2\beta_e)
		\ll Q^{-1/2}.
		\]
		Since
		$2\tau=2K+4+2\gamma,$
		we have, on this set,
		\[
		m_N^{S,T}\bigl(-2\beta_{S,T}\bigr)^{2\tau}
		\ll_{K,d}
		Q^{-\gamma}
		m_N^{S,T}\bigl(-2\beta_{S,T}\bigr)^{2K+4}.
		\]
		Using \eqref{eq:cut-moment-in-beta}, we obtain
		\begin{equation}\label{eq:crossed-cut-integral}
			\int_{\T^{\abs S\abs T}}
			\1_{\mathfrak m_Q^{(1)}}(-2\beta_e)
			m_N^{S,T}\bigl(-2\beta_{S,T}\bigr)^{2\tau}
			\,d\beta_{S,T}
			\ll_{K,d}
			Q^{-\gamma}N^{-2\abs S\abs T}.
		\end{equation}
		
		For the cuts crossed by $e$, \eqref{eq:crossed-cut-integral}
		contributes both a factor $Q^{-\gamma w_K}$ and a factor
		$N^{-2w_K\abs S\abs T}$ to the fractional H\"older estimate
		\eqref{fractionalholderestimate}. For the remaining cuts,
		\eqref{eq:uncrossed-cut-integral} contributes the factor
		$N^{-2w_K\abs S\abs T}$. Hence
		\[
		I_e
		\ll_{K,d}
		Q^{-\gamma w_K
			\#\{S\mid T\in\mathscr B_K:e\text{ crosses }S\mid T\}}
		N^{-2w_K
			\sum_{S\mid T\in\mathscr B_K}\abs S\abs T}.
		\]
		The fractional-cover identity \eqref{eq:fractional-cover} gives
		\[
		w_K\#\{S\mid T\in\mathscr B_K:e\text{ crosses }S\mid T\}=1.
		\]
		Moreover, every balanced cut has
		$\abs S\abs T=\lfloor K^2/4\rfloor$, so the definition of $w_K$ gives
		\begin{equation}\label{eq:cover-edge-count}
			w_K\sum_{S\mid T\in\mathscr B_K}\abs S\abs T
			=
			w_K\abs{\mathscr B_K}\lfloor K^2/4\rfloor
			=
			L.
		\end{equation}
		Putting these facts together proves \eqref{Iegoal}, and hence
		\eqref{eq:minor-cut-integral}.  Combining that estimate with
		\eqref{eq:arc-restricted-form}, \eqref{eq:scale-fourier-bound}, and
		\eqref{eq:geometric-cut-bound} gives
		\[
		\abs{\Lambda_N^{\mathfrak m_Q}(F_0,\ldots,F_{K-1})}
		\ll_{\Delta,d}
		Q^{-\gamma}N^{Kd/2-2L}
		\prod_{i=0}^{K-1}\norm{F_i}_{\ell^2(\Z^d)}.
		\]
		Finally, $N\geq2/\sigma$ implies
		$R=\lfloor\sigma N\rfloor\geq\sigma N/2$, so the second bound in
		\eqref{eq:global-minor} follows as well.
	\end{proof}
	\section{The major arcs}\label{sec:major arcs}
	
	Throughout this section assume
	\begin{equation}\label{eq:nonterminal-scale}
		R\geq4Q^{L(L+3)+3}.
	\end{equation}
	For $1\leq q\leq2Q^L$, put
	\begin{equation}\label{eq:major-smoothing-scale}
		N_q:=
		\left\lfloor
		\frac{R}{qQ^{L(L+2)+3}}
		\right\rfloor.
	\end{equation}
	Since $q\leq2Q^L$, condition \eqref{eq:nonterminal-scale} gives
	$N_q\geq1$.  Define the probability measure $\mu_q$ on $\Z^d$ by
	\[
	\mu_q(x)
	:=
	\frac1{N_q^d}
	\1_{q\{0,\ldots,N_q-1\}^d}(x).
	\]
	Thus, for every finitely supported $F:\Z^d\to\mathbb C$,
	\[
	(\mu_q*F)(y)
	=
	\frac1{N_q^d}
	\sum_{h\in\{0,\ldots,N_q-1\}^d}F(y-qh).
	\]
	
	The goal of the section is to prove the following major arc estimate.
	\begin{proposition}[major arc estimate]
		\label{prop:global-major}
		Assume $2\leq Q\leq R$, $d\geq4K+4$, and
		\eqref{eq:nonterminal-scale}.  There is $\eta=\eta(K,d)>1$ such that,
		for all functions $F_0,\ldots,F_{K-1}:\Z^d\to\mathbb C$ supported on
		$[N]^d$,
		\begin{align}
			\abs{\Lambda_N^{\mathfrak M_Q}(F_0,\ldots,F_{K-1})}
			\ll_{\Delta,d,\sigma}{}&
			R^{(K-1)d-2L}
			\sum_{q\leq2Q^L}q^{-1-\eta}
			\prod_{i=0}^{K-1}
			\norm{\mu_q*F_i}_{\ell^K(\Z^d)}
			\notag\\
			&+
			Q^{-2}R^{Kd/2-2L}
			\prod_{i=0}^{K-1}\norm{F_i}_{\ell^2(\Z^d)}.
			\label{eq:global-major-bound}
		\end{align}
	\end{proposition}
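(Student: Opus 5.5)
The plan is to decompose $\mathfrak M_Q$ into boxes around rationals, approximate $\mathcal S_\beta$ on each box by a complete-sum/archimedean major arc form, and bound that form using the cut reduction together with Finner's inequality. Every $\beta\in\mathfrak M_Q$ has $-2\beta_e\in\mathfrak M_Q^{(1)}$ for each edge $e$. Passing to a common denominator, we write $-2\beta=a/q+\theta'$, equivalently $\beta=a/q+\theta$, with $q\leq 2Q^L$ (the factor $2$ absorbs halving $-2\beta$), $(a,q)=1$, and $\norm{\theta}_\infty\ll Q/R^2$. Thus $\Lambda_N^{\mathfrak M_Q}$ is bounded by a sum over $q\leq2Q^L$ and $a\bmod q$ of integrals over $\theta$ in a box of side $\ll Q/R^2$.

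\textbf{Step 1: approximation.} For fixed $(q,a,\theta)$, write $z_i=y_i+q h_i$ with $h\in\{0,\ldots,N_q-1\}^d$. Then $e(\mathcal Q_{a/q}(z))$ depends only on $z\bmod q$. Moreover, $\mathcal Q_\theta$ changes by $O(\norm\theta_\infty\, qN_q N)=O(Q\,qN_q/R)\ll Q^{-L(L+2)-2}$ when $z$ moves within a $q$-spaced box of length $qN_q$. Averaging over such shifts therefore replaces each $F_i$ by $G_i(u,r)$ built from $\mu_q*F_i$. This gives $\mathcal S_{a/q+\theta}=\mathcal M_{q,a,\theta}(G_0,\ldots,G_{K-1})+E$ with $|E|\ll Q^{-L(L+2)-2}N^{Kd/2}\prod_i\norm{F_i}_2$. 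The error is estimated with the trivial bound via Cauchy--Schwarz. Summed over the $\ll Q^{L(L+1)}$ pairs $(q,a)$ and integrated over a box of volume $(Q/R^2)^L$, it contributes $\ll Q^{L(L+1)}Q^{L}R^{-2L}Q^{-L(L+2)-2}N^{Kd/2}\prod\norm{F_i}_2=Q^{-2}R^{Kd/2-2L}\prod\norm{F_i}_2$. This is the exact reason for the exponent in $N_q$.

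\textbf{Step 2: cut reduction for the main form.} We apply Lemma~\ref{lem:cut-cauchy-schwarz} on $X_i=B_i\times(\Z/q\Z)^d$ with $B_i$ a box of size $\asymp P\asymp N$. For each balanced cut this gives
\[
|\mathcal M_{q,a,\theta}|\leq \abs{\operatorname{im}(2a_{S\mid T})}^{-d/2}\,P^{Kd/2}m_P^{S,T}(-2\theta_{S,T})^d\prod_i\norm{G_i}_{L^2}.
\]
Here we use \eqref{eq:intro-finite-norm} together with tensor multiplicativity and Lemma~\ref{lem:cut-tensorization}. We then take the geometric mean over $\mathscr B_K$ as in \eqref{eq:geometric-cut-bound}. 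Integrating in $\theta$ via Lemma~\ref{lem:fractional-holder} and Corollary~\ref{cor:intro-fractional-mean-value} gives $\ll R^{-2L}$, since $\tau\geq K+2$. Integration at scale $P$ only improves this, and $\theta$ is restricted to a small box in any case.

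The $L^2$ norms must then be converted into the $\ell^K$ norms and the power $R^{(K-1)d}$. My intention is to avoid the $\ell^2$ route here: I would bound $\mathcal M$ instead by pulling the smooth archimedean sum out as an exact count. For fixed residues, the $u$-sum is a weighted count of real simplices weighted by functions roughly constant at scale $qN_q$. Integrated over $\theta$, it is $\ll R^{(K-1)d-2L}\sum_y\prod_i|\mu_q*F_i(y)|$ up to negligible error. By Hölder this is at most $\prod\norm{\mu_q*F_i}_{\ell^K}$.

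\textbf{Step 3: arithmetic factor, the main obstacle.} We must show
\[
\sum_{(a,q)=1}\prod_{S\mid T}\abs{\operatorname{im}(2a_{S\mid T})}^{-dw_K/|\mathscr B_K|\cdot(\cdots)}\ll q^{-1-\eta}
\]
with $\eta>1$, i.e. total saving better than $q^{-2}$ beyond the $q^{L}$ count. First I would factor over prime powers by CRT. For $q=p^k$, I would stratify $a$ by the Smith normal form of $2a_{S\mid T}$ using row and column operations, so that $|\operatorname{im}|=\prod_\ell p^{k-v_\ell}$. I would then count the $a$ with given valuations. Since $(a,q)=1$, some edge is a unit, and every cut crossing that edge has image size $\geq q$. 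Finner's inequality again distributes the exponent $d/2\cdot(2\tau w_K/d)$ across edges, giving roughly $q^{-\tau}$ per unit edge against $q^{L}$ choices. The difficulty is controlling degenerate $a$ of low rank mod $p$. Here the count of $a$ with $\operatorname{rank}$ of $a_{S\mid T}$ at most $\rho$ is $\ll p^{\rho(|S|+|T|-\rho)}$, and with $d\geq4K+4$ the exponent $d\rho/2$ should dominate. Verifying the resulting inequality uniformly in $\rho$ and $k$ is the step I expect to be delicate.
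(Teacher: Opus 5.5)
\textbf{Two gaps: the $\ell^K$ conversion and the arithmetic sum.}

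Your box covering, the approximation $\mathcal S_{a/q+\theta}=\mathcal M_{q,a,\theta}(G_{0,q},\ldots,G_{K-1,q})+E$ with its error bookkeeping, and the cut reduction on $X_i=[P]^d\times(\Z/q\Z)^d$ all match the paper. The same holds for the tensor factorization $\mathcal K_{P,d}^{S,T}(-2\theta_{S,T})\otimes\mathcal K_{q,a,d}^{S,T}$ followed by Finner in $\theta$.

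\textbf{The conversion to $\ell^K$.} Here you abandon the $\ell^2$ bound and propose to pull the archimedean sum out as an exact count for fixed residues. That is incompatible with your Step~3. Since $G_{i,q}(u_i,r_i)$ depends jointly on $u_i$ and $r_i$, the form $\mathcal M_{q,a,\theta}$ does not factor. Bounding the $u$-sum residue by residue therefore discards the cancellation in $e_q(\mathcal Q_a(r))$, and summing over the roughly $q^L$ numerators $a$ becomes a loss rather than a saving of $q^{-1-\eta}$. Your displayed bound is also a diagonal sum $\sum_y\prod_i\abs{\mu_q*F_i(y)}$. It cannot control configurations whose vertices lie $\asymp R$ apart, far beyond the smoothing scale $qN_q$. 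No exact count is available either: you must integrate $\abs{\mathcal M_{q,a,\theta}}$, and an $R^{(K-1)d-2L}$ bound for that integral is precisely the $\ell^2$ mean-value input.

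The detour is unnecessary. $G_{i,q}$ is supported on $[P_q]^d\times(\Z/q\Z)^d$, a set of total mass $P_q^d$ with $P_q=N+qN_q+q\ll_\sigma R$. So H\"older gives $\norm{G_{i,q}}_2\leq P_q^{d(1/2-1/K)}\norm{G_{i,q}}_K$. Moreover $\norm{G_{i,q}}_K=\norm{\mu_q*F_i}_{\ell^K}$, because for fixed $u$ the map $r\mapsto u-y(u,r)$ is a bijection onto $\{0,\ldots,q-1\}^d$. Hence $P_q^{Kd/2-2L}\prod_i\norm{G_{i,q}}_2\leq P_q^{(K-1)d-2L}\prod_i\norm{\mu_q*F_i}_{\ell^K}$, which is exactly the main term.

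\textbf{The arithmetic factor.} Step~3 is left open, and your heuristics for it are off.
\begin{itemize}
\item After Finner each cut carries the exponent $\tau=d\lfloor K^2/4\rfloor/(2L)$, roughly half of $d/2$, not $d/2$.
\item The margin is tight: $\eta>1$ is equivalent to $\tau>K+2$, which is exactly what $d\geq4K+4$ provides.
\item The bound $\abs{\operatorname{im}(2a_{S,T})}\geq q$ on cuts crossing a unit edge gives only $q^{L-\tau}$. For $K=4$, $d=20$ this is $q^{-2/3}$, not even $O(q^{-1})$.
\end{itemize}

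What is needed are two moment bounds over $s\times t$ matrices mod $p^m$. The first is $\E_{\mathsf M}\abs{\operatorname{im}(2\mathsf M)}^{-\tau}\ll p^{-mst}$, valid once $\tau>s+t-1$. The second is the sharper $\ll p^{-m[\tau+(s-1)(t-1)]}$ when a prescribed entry is a unit.

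The paper proves both with the pivot recursion
\[
W_{s,t}(p^m)\leq W_{s,t}(p^{m-1})+c\,p^{-m(\tau-s-t+1)}W_{s-1,t-1}(p^m),\qquad W_{s,t}(p^m)=\sum_{\mathsf M}\abs{\operatorname{im}\mathsf M}^{-\tau}.
\]
One eliminates a unit pivot and sums the Schur complement freely, while matrices divisible by $p$ drop to modulus $p^{m-1}$. This handles all prime powers at once, whereas your mod-$p$ rank count covers only $m=1$. The case $p=2$ is handled by reducing mod $2^{m-1}$.

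The paper then finishes as follows.
\begin{itemize}
\item Finner, with the unit indicator inserted into the cuts crossing $e$, gives $p^{-m(L+1+2\eta)}$, where $2\eta=\tau-K$.
\item A union bound over the unit edge, times the $p^{mL}$ numerators, gives $p^{-m(1+2\eta)}$.
\item CRT together with $C^{\omega(q)}\ll q^{\eta}$ then yields $q^{-1-\eta}$.
\end{itemize}
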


	\subsection{Reduction to the major arc form}
	
	For $q\geq1$, $a\in(\Z/q\Z)^L$, $\theta\in\T^L$, and finitely
	supported functions
	$G_i:\Z^d\times(\Z/q\Z)^d\to\mathbb C$, define the major arc form
	\begin{equation}\label{eq:major arc-form}
		\mathcal M_{q,a,\theta}(G_0,\ldots,G_{K-1})
		:=
		\E_{r\bmod q}
		\sum_{u\in(\Z^d)^K}
		e_q\bigl(\mathcal Q_a(r)\bigr)
		e\bigl(\mathcal Q_\theta(u)\bigr)
		\prod_{i=0}^{K-1}G_i(u_i,r_i).
	\end{equation}
	Here $r=(r_0,\ldots,r_{K-1})$ ranges over
	$((\Z/q\Z)^d)^K$, and $u=(u_0,\ldots,u_{K-1})$ ranges over
	$(\Z^d)^K$.  This is a form of the general type controlled by the
	cut-operator estimates, with each vertex variable given by the pair
	$(u_i,r_i)$.  Its kernel has a complete-sum factor in the residue
	variables and a discrete factor in the lattice variables.
	The form need not factor, since each $G_i$ may depend on both $u_i$
	and $r_i$.
	
	For every $(u,r)\in\Z^d\times(\Z/q\Z)^d$, there is a unique
	$y\in\Z^d$ satisfying
	\[
	y\equiv r\pmod q,
	\qquad
	u-y\in\{0,\ldots,q-1\}^d.
	\]
	For an input $F_i:\Z^d\to\mathbb C$, define
	$G_{i,q}:\Z^d\times(\Z/q\Z)^d\to\mathbb C$ by
	\begin{equation}\label{eq:major arc-input}
		G_{i,q}(u,r):=(\mu_q*F_i)(y),
	\end{equation}
	where $y$ is the unique lattice point just described.  Thus $r$ is
	the residue class modulo $q$ of the lattice point at which
	$\mu_q*F_i$ is evaluated.
	
	\begin{lemma}
		\label{lem:major arc-approximation}
		Suppose $1\leq q\leq2Q^L$, $a\in(\Z/q\Z)^L$, and
		$\norm{\theta}_\infty\leq Q/R^2$.  Then, for all functions
		$F_0,\ldots,F_{K-1}:\Z^d\to\mathbb C$ supported on $[N]^d$,
		\begin{equation}
			\label{eq:major arc-approximation}
			\left|
			\mathcal S_{a/q+\theta}(F_0,\ldots,F_{K-1})
			-
			\mathcal M_{q,a,\theta}
			(G_{0,q},\ldots,G_{K-1,q})
			\right|
			\ll_{K,d,\sigma}
			Q^{-L(L+2)-2}
			N^{Kd/2}
			\prod_{i=0}^{K-1}\norm{F_i}_{\ell^2(\Z^d)}.
		\end{equation}
	\end{lemma}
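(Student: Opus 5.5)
The plan is to pass from $\mathcal S_{a/q+\theta}$ to $\mathcal M_{q,a,\theta}$ in two approximation steps. In each step the phase changes by only a tiny amount, so each error can be bounded trivially in $\ell^1$. The final conversion uses
\[
\norm{F}_{\ell^1}\leq N^{d/2}\norm{F}_{\ell^2}
\]
for functions supported on $[N]^d$, together with $\norm{\mu_q*F}_{\ell^1}\leq\norm F_{\ell^1}$. Write $\Phi(z):=e_q(\mathcal Q_a(z))\,e(\mathcal Q_\theta(z))$, so that $\mathcal S_{a/q+\theta}=\sum_z\Phi(z)\prod_iF_i(z_i)$. Since $\mathcal Q_a$ has integer coefficients, $e_q(\mathcal Q_a(z))$ depends only on $z\bmod q$.

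\emph{Step 1 (smoothing by $\mu_q$).} Fix $h=(h_0,\ldots,h_{K-1})\in(\{0,\ldots,N_q-1\}^d)^K$ and change variables $z_i\mapsto z_i-qh_i$. This gives
\[
\mathcal S_{a/q+\theta}=\sum_z\Phi(z-qh)\prod_iF_i(z_i-qh_i).
\]
Averaging over $h$ and replacing $\Phi(z-qh)$ by $\Phi(z)$ turns the product into $\prod_i(\mu_q*F_i)(z_i)$. The residue factor is unchanged under this replacement because $z-qh\equiv z\pmod q$. On the support we have $\abs{z_i}_\infty\ll N$ and $\abs{qh_i}_\infty\leq qN_q\leq R/Q^{L(L+2)+3}$. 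Hence
\[
\abs{\mathcal Q_\theta(z-qh)-\mathcal Q_\theta(z)}\ll_{K,d}\norm\theta_\infty\,N\,qN_q\ll\frac{Q}{R^2}\cdot N\cdot\frac{R}{Q^{L(L+2)+3}}\ll_\sigma Q^{-L(L+2)-2},
\]
using $R\asymp_\sigma N$. The error is therefore at most $Q^{-L(L+2)-2}\sum_z\prod_i\E_{h_i}\abs{F_i(z_i-qh_i)}=Q^{-L(L+2)-2}\prod_i\norm{F_i}_{\ell^1}$.

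\emph{Step 2 (matching the major arc form).} Fix $r\in((\Z/q\Z)^d)^K$. For each $i$, the map $u_i\mapsto y_i$ (the unique $y_i\equiv r_i$ with $u_i-y_i\in\{0,\ldots,q-1\}^d$) is exactly $q^d$-to-one onto the class $y_i\equiv r_i$. Consequently
\[
\E_{r}\sum_u e_q(\mathcal Q_a(r))\,e(\mathcal Q_\theta(y))\prod_iG_{i,q}(u_i,r_i)=\sum_y\Phi(y)\prod_i(\mu_q*F_i)(y_i),
\]
which is the main term of Step 1. Here we used $e_q(\mathcal Q_a(r))=e_q(\mathcal Q_a(y))$. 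The form $\mathcal M_{q,a,\theta}(G_{0,q},\ldots,G_{K-1,q})$ differs from the left side only in having $e(\mathcal Q_\theta(u))$ in place of $e(\mathcal Q_\theta(y))$. Since $\abs{u-y}_\infty<q\leq2Q^L$, we get
\[
\abs{\mathcal Q_\theta(u)-\mathcal Q_\theta(y)}\ll\frac{Q}{R^2}\,N\,Q^L\ll_\sigma\frac{Q^{L+1}}{R}.
\]
By \eqref{eq:nonterminal-scale} this is $\ll Q^{-L(L+2)-2}$. The resulting error is bounded by the same quantity times $\prod_i\norm{\mu_q*F_i}_{\ell^1}\leq\prod_i\norm{F_i}_{\ell^1}$. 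Applying $\norm{F_i}_{\ell^1}\leq N^{d/2}\norm{F_i}_{\ell^2}$ to both errors yields \eqref{eq:major arc-approximation}.

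The main point to check carefully is the bookkeeping of supports in the phase-difference bounds. The arguments of $\mathcal Q_\theta$ range over a box of side $O(N)$ rather than $[N]^d$, which costs only constants. The exponents $Q^{L(L+2)+3}$ in $N_q$ and $Q^{L(L+3)+3}$ in \eqref{eq:nonterminal-scale} are chosen exactly so that both phase errors come out as $Q^{-L(L+2)-2}$ up to constants depending on $\sigma,K,d$. Beyond that, the argument uses only $\abs{e(x)-e(y)}\ll\abs{x-y}$.
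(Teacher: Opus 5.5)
Your proof is correct and is essentially the paper's argument. Both rewrite $\mathcal M_{q,a,\theta}(G_{0,q},\ldots,G_{K-1,q})$ exactly in terms of the $F_i$, bound the change in the archimedean phase $e(\mathcal Q_\theta)$ using $\norm{\theta}_\infty\leq Q/R^2$ and $N\ll_\sigma R$, and finish with $\norm{F_i}_{\ell^1}\leq N^{d/2}\norm{F_i}_{\ell^2}$. The only difference is cosmetic: the paper merges your two shifts into the single displacement $t=qh+s\in\{0,\ldots,qN_q-1\}^d$, so one comparison with $\norm{t}_\infty<qN_q$ suffices, while you compare twice and use \eqref{eq:nonterminal-scale} for the residual shift $\norm{u-y}_\infty<q$.
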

	
	\begin{proof}
		Since every $\abs{z_i-z_j}^2$ is an integer,
		\[
		e\bigl(\mathcal Q_{a/q+\theta}(z)\bigr)
		=
		e_q\bigl(\mathcal Q_a(z)\bigr)
		e\bigl(\mathcal Q_\theta(z)\bigr).
		\]
		Consequently,
		\begin{equation}\label{eq:major-approx-discrete-form}
			\mathcal S_{a/q+\theta}(F_0,\ldots,F_{K-1})
			=
			\sum_{z\in(\Z^d)^K}
			e_q\bigl(\mathcal Q_a(z)\bigr)
			e\bigl(\mathcal Q_\theta(z)\bigr)
			\prod_{i=0}^{K-1}F_i(z_i).
		\end{equation}
		
		We now rewrite the major arc form in the variables appearing in
		\eqref{eq:major-approx-discrete-form}.  Fix
		$r=(r_0,\ldots,r_{K-1})$.  For each $0\leq i\leq K-1$, every
		$u_i\in\Z^d$ has a unique representation
		\[
		u_i=y_i+s_i,
		\qquad
		y_i\equiv r_i\pmod q,
		\qquad
		s_i\in\{0,\ldots,q-1\}^d.
		\]
		It follows from \eqref{eq:major arc-input} that
		\begin{align*}
			&\mathcal M_{q,a,\theta}(G_{0,q},\ldots,G_{K-1,q})\\
			&\quad=
			\E_{r\bmod q}
			\sum_{\substack{y\in(\Z^d)^K\\
					y_i\equiv r_i\pmod q\ (0\leq i\leq K-1)}}
			e_q\bigl(\mathcal Q_a(r)\bigr)
			\prod_{i=0}^{K-1}(\mu_q*F_i)(y_i)
			\sum_{\substack{s_i\in\{0,\ldots,q-1\}^d\\
					0\leq i\leq K-1}}
			e\bigl(\mathcal Q_\theta(y+s)\bigr).
		\end{align*}
		Expanding the convolutions gives
		\begin{align*}
			&\mathcal M_{q,a,\theta}(G_{0,q},\ldots,G_{K-1,q})\\
			&\quad=
			\frac1{N_q^{Kd}}
			\E_{r\bmod q}
			\sum_{\substack{h_i\in\{0,\ldots,N_q-1\}^d\\
					0\leq i\leq K-1}}
			\sum_{\substack{y\in(\Z^d)^K\\
					y_i\equiv r_i\pmod q\ (0\leq i\leq K-1)}}
			e_q\bigl(\mathcal Q_a(r)\bigr)
			\prod_{i=0}^{K-1}F_i(y_i-qh_i)\\
			&\hspace{7cm}\times
			\sum_{\substack{s_i\in\{0,\ldots,q-1\}^d\\
					0\leq i\leq K-1}}
			e\bigl(\mathcal Q_\theta(y+s)\bigr).
		\end{align*}
		
		For fixed $h=(h_0,\ldots,h_{K-1})$, make the change of variables
		\[
		z_i:=y_i-qh_i
		\qquad (0\leq i\leq K-1).
		\]
		Then $z_i\equiv r_i\pmod q$, $y+s=z+qh+s$, and
		$e_q\bigl(\mathcal Q_a(r)\bigr) = e_q\bigl(\mathcal Q_a(z)\bigr).$
		For each $z\in(\Z^d)^K$, there is exactly one $r\bmod q$ satisfying
		$z_i\equiv r_i\pmod q$ for every $i$.  Since the average over $r$ is
		normalized, it contributes a factor $q^{-Kd}$.  Therefore
		\begin{align*}
			&\mathcal M_{q,a,\theta}(G_{0,q},\ldots,G_{K-1,q})\\
			&\quad=
			\frac1{(qN_q)^{Kd}}
			\sum_{z\in(\Z^d)^K}
			e_q\bigl(\mathcal Q_a(z)\bigr)
			\prod_{i=0}^{K-1}F_i(z_i)
			\sum_{\substack{h_i\in\{0,\ldots,N_q-1\}^d\\
					s_i\in\{0,\ldots,q-1\}^d\\
					0\leq i\leq K-1}}
			e\bigl(\mathcal Q_\theta(z+qh+s)\bigr).
		\end{align*}
		
		Finally, for each $i$, the map
		$(h_i,s_i)\longmapsto t_i:=qh_i+s_i$
		is a bijection from
		\[
		\{0,\ldots,N_q-1\}^d\times\{0,\ldots,q-1\}^d
		\quad\text{onto}\quad
		\{0,\ldots,qN_q-1\}^d.
		\]
		Consequently,
		\begin{equation}\label{eq:major-approx-form}
			\mathcal M_{q,a,\theta}(G_{0,q},\ldots,G_{K-1,q})
			=
			\sum_{z\in(\Z^d)^K}
			e_q\bigl(\mathcal Q_a(z)\bigr)
			\prod_{i=0}^{K-1}F_i(z_i)\times
			\frac1{(qN_q)^{Kd}}
			\sum_{\substack{t_i\in\{0,\ldots,qN_q-1\}^d\\
					0\leq i\leq K-1}}
			e\bigl(\mathcal Q_\theta(z+t)\bigr).
		\end{equation}
		
		Comparing \eqref{eq:major-approx-discrete-form} and
		\eqref{eq:major-approx-form}, the rational phase and the values of the
		$F_i$ are identical.  The only difference is that
		$e(\mathcal Q_\theta(z))$ is replaced by the average of
		$e(\mathcal Q_\theta(z+t))$ over the indicated choices of $t$.
		
		For $0\leq i\leq K-1$,
		\[
		\nabla_{x_i}\mathcal Q_\theta(x)
		=
		2\sum_{j:\{i,j\}\in\mathcal E}
		\theta_{ij}(x_i-x_j).
		\]
		If $z_i\in[N]^d$ and
		$t_i\in\{0,\ldots,qN_q-1\}^d$ for every $i$, then every point $x$
		on the segment joining $z$ to $z+t$ satisfies
		\[
		\max_{i,j}\abs{x_i-x_j}\ll_d N+qN_q\ll_d N.
		\]
		Since $\norm{\theta}_\infty\leq Q/R^2$ and
		$qN_q\leq RQ^{-L(L+2)-3},$
		the mean value theorem gives
		\[
		\left|
		e\bigl(\mathcal Q_\theta(z)\bigr)
		-e\bigl(\mathcal Q_\theta(z+t)\bigr)
		\right|
		\ll_{K,d}
		\frac{QN}{R^2}\,qN_q
		\ll_{K,d,\sigma}Q^{-L(L+2)-2}.
		\]
		Here we used $R=\lfloor\sigma N\rfloor$ and
		\eqref{eq:nonterminal-scale}, which imply $N\ll_\sigma R$.
		Averaging this estimate in $t$ and using
		\eqref{eq:major-approx-discrete-form}--\eqref{eq:major-approx-form}
		gives
		\begin{equation}
			\left|
			\mathcal S_{a/q+\theta}(F_0,\ldots,F_{K-1})
			-
			\mathcal M_{q,a,\theta}(G_{0,q},\ldots,G_{K-1,q})
			\right|\ll_{K,d,\sigma}
			Q^{-L(L+2)-2}
			\prod_{i=0}^{K-1}\sum_{z_i\in[N]^d}\abs{F_i(z_i)}.
		\end{equation}
		Cauchy--Schwarz in each factor proves
		\eqref{eq:major arc-approximation}.
	\end{proof}
	\subsection[The complete sum]{The complete sum}
	
	With normalized counting measure, define the complete cut operator
	\[
	\begin{aligned}
		\mathcal K_{q,a,d}^{S,T}:
		L^2\bigl(((\Z/q\Z)^d)^T\bigr)
		&\longrightarrow
		L^2\bigl(((\Z/q\Z)^d)^S\bigr),\\
		\bigl(\mathcal K_{q,a,d}^{S,T}H\bigr)(r_S)
		&:={}
		\E_{r_T\bmod q}
		e_q\!\left(
		-2\sum_{i\in S,\,j\in T}a_{ij}r_i\cdot r_j
		\right)H(r_T).
	\end{aligned}
	\]
	We now compute the $2\to2$ norm of this cut operator.
	
	\begin{lemma}
		\label{lem:finite-bilinear-operator}
		One has
		\begin{equation}\label{eq:complete-sum-cut-norm}
			\norm{\mathcal K_{q,a,d}^{S,T}}_{2\to2}
			=
			\abs{\operatorname{im}(2a_{S,T})}^{-d/2}.
		\end{equation}
	\end{lemma}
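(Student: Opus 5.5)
The plan is to compute $\mathcal K^*\mathcal K$ explicitly and recognize it as a scaled orthogonal projection; here $\mathcal K:=\mathcal K_{q,a,d}^{S,T}$. The norm then follows from $\norm{\mathcal K}_{2\to2}^2=\norm{\mathcal K^*\mathcal K}_{2\to2}$.

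First I reduce to $d=1$. The phase $\sum_{i\in S,j\in T}a_{ij}r_i\cdot r_j$ splits over the $d$ coordinates, and the normalized counting measures are products. Exactly as in Lemma~\ref{lem:cut-tensorization}, this gives $\mathcal K_{q,a,d}^{S,T}=(\mathcal K_{q,a,1}^{S,T})^{\otimes d}$. Since $\abs{\operatorname{im}(2a_{S,T})}$ on $(\Z/q\Z)^{dT}\to(\Z/q\Z)^{dS}$ is the $d$-th power of the one-dimensional image size, it suffices to prove that the one-dimensional norm is $\abs{\operatorname{im}M}^{-1/2}$, where $M:=2a_{S,T}:(\Z/q\Z)^T\to(\Z/q\Z)^S$.

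For $d=1$, write the phase as $-\,r_S\cdot Mr_T=-\,M^{\mathsf T}r_S\cdot r_T$. Then for $H\in L^2((\Z/q\Z)^T)$ with normalized measure, $(\mathcal K H)(r_S)=\widehat H(M^{\mathsf T}r_S)$, where $\widehat H(\xi):=\E_{r_T}H(r_T)e_q(-\xi\cdot r_T)$. By Plancherel, normalized on the physical side and counting on the frequency side, we have $\sum_\xi\abs{\widehat H(\xi)}^2=\norm H^2$. Hence
\[
\norm{\mathcal K H}^2=\E_{r_S}\abs{\widehat H(M^{\mathsf T}r_S)}^2=\frac{1}{\abs{\operatorname{im}M^{\mathsf T}}}\sum_{\xi\in\operatorname{im}M^{\mathsf T}}\abs{\widehat H(\xi)}^2,
\]
since $r_S\mapsto M^{\mathsf T}r_S$ is a homomorphism whose fibres over its image all have size $\abs{\ker M^{\mathsf T}}$.

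This quantity is at most $\abs{\operatorname{im}M^{\mathsf T}}^{-1}\norm H^2$. Equality holds by choosing $H$ to be a character $e_q(\xi_0\cdot r_T)$ with $\xi_0\in\operatorname{im}M^{\mathsf T}$. Therefore $\norm{\mathcal K}_{2\to2}=\abs{\operatorname{im}M^{\mathsf T}}^{-1/2}$.

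The only point that needs care is the identity $\abs{\operatorname{im}M^{\mathsf T}}=\abs{\operatorname{im}M}$ for a matrix over $\Z/q\Z$. I would prove it via Pontryagin duality: $\ker M^{\mathsf T}$ is the annihilator of $\operatorname{im}M$ under the dot-product pairing on $(\Z/q\Z)^S$, so $\abs{\ker M^{\mathsf T}}=q^{\abs S}/\abs{\operatorname{im}M}$. Combining this with $\abs{\operatorname{im}M^{\mathsf T}}=q^{\abs S}/\abs{\ker M^{\mathsf T}}$ gives the claim. Alternatively, Smith normal form over $\Z$, reduced mod $q$, gives the same conclusion. Bookkeeping of normalizations, namely normalized measure on both sides with counting measure on the Fourier side, is the remaining routine check.
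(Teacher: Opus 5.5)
Your argument is correct, and it is essentially the paper's orthogonality computation viewed from the dual side. The paper forms $\mathcal K\mathcal K^*$ on the $S$-side and finds that $(\mathcal K\mathcal K^*f)(r_S)$ is $q^{-\abs S}$ times the sum of $f$ over $r_S+\ker((2a_{S,T})^{\mathsf T})$; it bounds this by Cauchy--Schwarz with extremizer $f=\mathbf 1$, and obtains $\abs{\ker((2a_{S,T})^{\mathsf T})}=q^{\abs S}/\abs{\operatorname{im}(2a_{S,T})}$ by evaluating a double character sum in two orders. You instead diagonalize $\mathcal K^*\mathcal K$ on the $T$-side via Plancherel and get the same identity from annihilators. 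One cosmetic point: the exponent $-d/2$ already means $2a_{S,T}$ is read as a map $(\Z/q\Z)^T\to(\Z/q\Z)^S$, so your remark about the image on the $d$-fold space is unnecessary.
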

	
	\begin{proof}
		By the same argument as in the proof of
		Lemma~\ref{lem:cut-tensorization},
		\begin{equation}\label{eq:complete-sum-tensorization}
			\norm{\mathcal K_{q,a,d}^{S,T}}_{2\to2}
			=
			\norm{\mathcal K_{q,a,1}^{S,T}}_{2\to2}^{d}.
		\end{equation}
		It therefore remains to compute
		$\norm{\mathcal K_{q,a,1}^{S,T}}_{2\to2}.$
		
		Write $\mathcal K:=\mathcal K_{q,a,1}^{S,T}.$
		For a function $f$ on $(\Z/q\Z)^S$, expanding
		$\mathcal K\mathcal K^*$ gives
		\[
		\begin{aligned}
			(\mathcal K\mathcal K^*f)(r_S)
			&=
			\E_{r_S'\bmod q}f(r_S')
			\E_{r_T\bmod q}
			e_q\!\left(
			(r_S'-r_S)^{\mathsf T}2a_{S,T}r_T
			\right)\\
			&=
			q^{-\abs S}
			\sum_{\substack{r_S'\bmod q\\
					(2a_{S,T})^{\mathsf T}(r_S-r_S')=0}}
			f(r_S'),
		\end{aligned}
		\]
		where the second equality follows from the orthogonality relation.
		Making the substitution $h=r_S-r_S'$ gives
		\[
		(\mathcal K\mathcal K^*f)(r_S)
		=
		q^{-\abs S}
		\sum_{h\in\ker((2a_{S,T})^{\mathsf T})}
		f(r_S-h).
		\]
		Hence, by Cauchy--Schwarz,
		\[
		\begin{aligned}
			\norm{\mathcal K\mathcal K^*f}_2^2
			&=
			q^{-2\abs S}
			\E_{r_S\bmod q}
			\left|
			\sum_{h\in\ker((2a_{S,T})^{\mathsf T})}
			f(r_S-h)
			\right|^2\\
			&\leq
			q^{-2\abs S}
			\abs{\ker((2a_{S,T})^{\mathsf T})}
			\sum_{h\in\ker((2a_{S,T})^{\mathsf T})}
			\E_{r_S\bmod q}\abs{f(r_S-h)}^2\\
			&=
			\left(
			\frac{\abs{\ker((2a_{S,T})^{\mathsf T})}}
			{q^{\abs S}}
			\right)^2
			\norm f_2^2.
		\end{aligned}
		\]
		This bound is attained by $f=\mathbf 1$, since
		$\mathcal K\mathcal K^*\mathbf 1 = (\abs{\ker((2a_{S,T})^{\mathsf T})}/q^{\abs S})\mathbf 1.$
		Thus
		\[
		\norm{\mathcal K\mathcal K^*}_{2\to2}
		=
		\frac{\abs{\ker((2a_{S,T})^{\mathsf T})}}
		{q^{\abs S}}.
		\]
		
		Finally, the orthogonality relation gives
		\[
		\begin{aligned}
			\abs{\ker((2a_{S,T})^{\mathsf T})}
			&=
			\sum_{r_S\bmod q}
			\E_{r_T\bmod q}
			e_q\!\left(r_S^{\mathsf T}2a_{S,T}r_T\right)\\
			&=
			q^{\abs S-\abs T}\abs{\ker(2a_{S,T})}\\
			&=
			\frac{q^{\abs S}}
			{\abs{\operatorname{im}(2a_{S,T})}},
		\end{aligned}
		\]
		where the last equality follows from
		\[
		q^{\abs T}
		=
		\abs{\ker(2a_{S,T})}
		\abs{\operatorname{im}(2a_{S,T})}.
		\]
		Consequently,
		\begin{equation}\label{eq:complete-sum-one-dimensional}
			\norm{\mathcal K_{q,a,1}^{S,T}}_{2\to2}^2
			=
			\norm{\mathcal K\mathcal K^*}_{2\to2}
			=
			\abs{\operatorname{im}(2a_{S,T})}^{-1}.
		\end{equation}
		Combining \eqref{eq:complete-sum-tensorization} and
		\eqref{eq:complete-sum-one-dimensional} proves
		\eqref{eq:complete-sum-cut-norm}.
	\end{proof}

	We need an unrestricted matrix moment and a stronger moment when one
	entry is a unit.
	
	\begin{lemma}[Matrix moments]
		Let $p$ be prime, and let $m,s,t$ be positive integers.  If
		$\tau>s+t-1$, then, with normalized averaging over
		$\mathsf M\in\operatorname{Mat}_{s\times t}(\Z/p^m\Z)$, one has, for every
		fixed $1\leq i\leq s$ and $1\leq j\leq t$,
		\begin{align}
			\E_{\mathsf M}\abs{\operatorname{im}(2\mathsf M)}^{-\tau}
			&\ll_{s,t,\tau} p^{-mst},
			\label{eq:finite-matrix-unconditioned}\\
			\E_{\mathsf M}\!\left[
			\1_{\mathsf M_{ij}\in(\Z/p^m\Z)^\times}
			\abs{\operatorname{im}(2\mathsf M)}^{-\tau}\right]
			&\ll_{s,t,\tau} p^{-m[\tau+(s-1)(t-1)]}.
			\label{eq:finite-matrix-unit}
		\end{align}
		The implicit constants depend only on $s,t,$ and $\tau$.
	\end{lemma}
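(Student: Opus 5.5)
The plan is to reduce to counting matrices over $\Z/p^m\Z$ stratified by the size of their image, and then sum a geometric series. The factor $2$ is harmless. If $p$ is odd, $2$ is a unit and $\abs{\operatorname{im}(2\mathsf M)}=\abs{\operatorname{im}(\mathsf M)}$. If $p=2$, multiplication by $2$ on the image is a homomorphism whose kernel lies in the $2$-torsion of $(\Z/2^m\Z)^s$, which has size at most $2^s$. Hence $\abs{\operatorname{im}(2\mathsf M)}\geq 2^{-s}\abs{\operatorname{im}(\mathsf M)}$, and $\abs{\operatorname{im}(2\mathsf M)}^{-\tau}\leq 2^{s\tau}\abs{\operatorname{im}(\mathsf M)}^{-\tau}$, a loss depending only on $s,\tau$. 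So it suffices to prove both bounds with $\mathsf M$ in place of $2\mathsf M$.

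\emph{Unconditioned moment.} Every image is a subgroup $H\leq(\Z/p^m\Z)^s$ of order $p^k$ for some $0\leq k\leq ms$. The number of $\mathsf M$ with $\operatorname{im}(\mathsf M)=H$ is at most $\abs H^t=p^{kt}$, since each of the $t$ columns must lie in $H$. I would then use the standard bound
\[
\#\{H\leq(\Z/p^m\Z)^s:\abs H=p^k\}\ll_s p^{k(s-1)},
\]
which is uniform in $m$. This is the step I expect to need the most care. I would prove it via Smith-type normal forms: write $H$ by a generating matrix in echelon form relative to its elementary divisors $p^{m-f_1},\ldots$ with $\sum f_\ell=k$, count the free entries for each type, and observe that the worst case is a cyclic $H$, giving about $p^{k(s-1)}$. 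Summing over the $\ll_s(1+k)^{s}$ possible types costs only a polynomial factor in $k$. Granting this bound,
\[
\E_{\mathsf M}\abs{\operatorname{im}\mathsf M}^{-\tau}
\ll_{s,t}p^{-mst}\sum_{k\geq0}(1+k)^{s}p^{k(s+t-1-\tau)}\ll_{s,t,\tau}p^{-mst},
\]
because $\tau>s+t-1$ makes the series converge uniformly in $p$ and $m$. This gives \eqref{eq:finite-matrix-unconditioned}.

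\emph{Unit entry.} Assume $\mathsf M_{ij}$ is a unit. Row and column operations clearing row $i$ and column $j$ transform $\mathsf M$ into $\operatorname{diag}(\mathsf M_{ij},\mathsf M')$. Here $\mathsf M'\in\operatorname{Mat}_{(s-1)\times(t-1)}(\Z/p^m\Z)$ is the Schur complement $\mathsf M'=\mathsf M_{\hat i\hat j}-\mathsf M_{\hat i j}\mathsf M_{ij}^{-1}\mathsf M_{i\hat j}$. Therefore $\abs{\operatorname{im}\mathsf M}=p^m\abs{\operatorname{im}\mathsf M'}$. For fixed row $i$ and column $j$, of which there are at most $p^{m(s+t-1)}$ choices, the map $\mathsf M_{\hat i\hat j}\mapsto\mathsf M'$ is a bijection. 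This gives
\[
\E_{\mathsf M}\bigl[\1_{\mathsf M_{ij}\in(\Z/p^m\Z)^\times}\abs{\operatorname{im}\mathsf M}^{-\tau}\bigr]
\leq p^{-mst}\,p^{m(s+t-1)}\,p^{-m\tau}\sum_{\mathsf M'}\abs{\operatorname{im}\mathsf M'}^{-\tau}.
\]
If $s=1$ or $t=1$, the last sum is the single term $1$. Otherwise, the counting from the first part applied to size $(s-1)\times(t-1)$ gives
\[
\sum_{\mathsf M'}\abs{\operatorname{im}\mathsf M'}^{-\tau}\ll\sum_k(1+k)^{s}p^{k(s+t-3-\tau)}\ll1,
\]
since $\tau>s+t-3$. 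Noting that $-mst+m(s+t-1)=-m(s-1)(t-1)$, the bound becomes $p^{-m[\tau+(s-1)(t-1)]}$, which is \eqref{eq:finite-matrix-unit}.
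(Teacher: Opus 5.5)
Your proof is correct, but for the unconditioned moment it takes a different route from the paper. The paper never enumerates subgroups. It sets $W_{s,t}(p^m)=\sum_{\mathsf M}\abs{\operatorname{im}\mathsf M}^{-\tau}$ and splits off the matrices divisible by $p$, which contribute exactly $W_{s,t}(p^{m-1})$. Every other matrix is handled by pivoting on a unit entry. This gives the recursion $W_{s,t}(p^m)\le W_{s,t}(p^{m-1})+c_{s,t}p^{-m(\tau-s-t+1)}W_{s-1,t-1}(p^m)$, which the paper closes by induction on $\min(s,t)$. The unit-entry bound then falls out of the same Schur-complement pivot, exactly as in your second part.

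Your approach stratifies by $H=\operatorname{im}\mathsf M$: each $H$ receives at most $\abs H^t$ matrices, and there are $\ll_s(1+k)^sp^{k(s-1)}$ subgroups of order $p^k$. This replaces the recursion with a single geometric series and shows that the threshold $\tau>s+t-1$ is dictated by cyclic images. The cost is that the whole argument rests on the subgroup count, which you only sketch. That count does hold uniformly in $m$. For instance, the classical formula for the number of subgroups of type $\nu$ in $(\Z/p^m\Z)^s$ gives $\ll_s p^{\sum_i\nu_i'(s-\nu_i')}\le p^{(s-1)k}$, where $\nu'$ is the conjugate partition. In fact the factor $(1+k)^s$ can even be dropped, though you state the bound without it and then use it with it; either version suffices. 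Still, this count needs a citation or a completed echelon-form computation, whereas the paper's argument is self-contained.

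Your handling of the factor $2$, via $\abs{2H}\ge2^{-s}\abs H$ for $H\le(\Z/2^m\Z)^s$, is cleaner than the paper's passage to $\mathsf M\bmod 2^{m-1}$. It also covers both estimates at once, since the unit condition is imposed on $\mathsf M$ itself.
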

	
	\begin{proof}
		Put
		\[
		W_{s,t}(p^m)
		:=
		\sum_{\mathsf M\in\operatorname{Mat}_{s\times t}(\Z/p^m\Z)}
		\abs{\operatorname{im}\mathsf M}^{-\tau},
		\]
		with the conventions
		$W_{0,t}(p^m)=W_{s,0}(p^m)=W_{s,t}(1)=1$.
		
		The matrices divisible by $p$ contribute $W_{s,t}(p^{m-1})$.  Every
		other matrix has a unit entry.  Choose such an entry and use it as a
		pivot.  After permuting rows and columns, write
		\[
		\mathsf M
		=
		\begin{pmatrix}
			u&\mathsf b\\
			\mathsf c&\mathsf D
		\end{pmatrix},
		\]
		where $\mathsf b$ is a row vector and $\mathsf c$ is a column vector.
		Invertible row and column operations reduce $\mathsf M$ to
		\[
		\begin{pmatrix}
			u&0\\
			0&\mathsf M'
		\end{pmatrix},
		\qquad
		\mathsf M'=\mathsf D-\mathsf c u^{-1}\mathsf b.
		\]
		These operations preserve image cardinality, and hence
		$\abs{\operatorname{im}\mathsf M} = p^m\abs{\operatorname{im}\mathsf M'}.$
		There are $O_{s,t}(p^{m(s+t-1)})$ choices for the position of the pivot
		and for $u,\mathsf b,$ and $\mathsf c$.  For each such choice, the map
		$\mathsf C\longmapsto \mathsf M'=\mathsf C-\mathsf c u^{-1}\mathsf b$
		is a bijection of
		$\operatorname{Mat}_{(s-1)\times(t-1)}(\Z/p^m\Z)$, so $\mathsf M'$ may
		be summed freely.  Therefore
		\[
		W_{s,t}(p^m)
		\leq
		W_{s,t}(p^{m-1})
		+
		c_{s,t}p^{-m(\tau-s-t+1)}
		W_{s-1,t-1}(p^m).
		\]
		
		We now induct on $\min(s,t)$.  The base case is $\min(s,t)=0$, when
		$W_{s,t}(p^m)=1$ by convention.  Suppose that $s,t\geq1$, and assume
		inductively that
		$W_{s-1,t-1}(p^r)\ll_{s,t,\tau}1$
		uniformly in $p$ and $r$.  Set $\delta:=\tau-s-t+1>0$.  Iterating the
		preceding recursion from $m$ down to $0$ gives
		\[
		\begin{aligned}
			W_{s,t}(p^m)
			&\leq
			W_{s,t}(p^{m-1})
			+
			c_{s,t}p^{-m\delta}W_{s-1,t-1}(p^m)\\
			&\leq
			W_{s,t}(p^{m-2})
			+
			c_{s,t}p^{-(m-1)\delta}W_{s-1,t-1}(p^{m-1})
			+
			c_{s,t}p^{-m\delta}W_{s-1,t-1}(p^m)\\
			&\ \vdots\\
			&\leq
			W_{s,t}(1)
			+
			c_{s,t}\sum_{r=1}^m
			p^{-r\delta}W_{s-1,t-1}(p^r)\\
			&\ll
			1+\sum_{r=1}^m p^{-r\delta}
			\ll 1.
		\end{aligned}
		\]
		Consequently,
		\begin{equation}\label{7.13}
			\E_{\mathsf M}
			\abs{\operatorname{im}\mathsf M}^{-\tau}
			=
			p^{-mst}W_{s,t}(p^m)
			\ll
			p^{-mst}.
		\end{equation}
		
		It remains to pass from $\mathsf M$ to $2\mathsf M$.  If $p$ is odd,
		multiplication by $2$ is invertible modulo $p^m$, so
		$\abs{\operatorname{im}(2\mathsf M)} = \abs{\operatorname{im}\mathsf M}.$
		The estimate \eqref{7.13} therefore yields
		\eqref{eq:finite-matrix-unconditioned}.
		
		Suppose now that $p=2$ and $m\geq2$, and let
		$\overline{\mathsf M}$ denote the reduction of $\mathsf M$ modulo
		$2^{m-1}$.  Then
		$\abs{\operatorname{im}(2\mathsf M\bmod 2^m)} = \abs{\operatorname{im}\overline{\mathsf M}}.$
		Every matrix modulo $2^{m-1}$ has exactly $2^{st}$ lifts modulo $2^m$.
		Hence, under normalized averaging, we have 
		\[
		\begin{aligned}
			\E_{\mathsf M\bmod 2^m}
			\abs{\operatorname{im}(2\mathsf M)}^{-\tau}
			=
			\E_{\overline{\mathsf M}\bmod 2^{m-1}}
			\abs{\operatorname{im}\overline{\mathsf M}}^{-\tau}\ll
			2^{-(m-1)st}
			\ll
			2^{-mst}.
		\end{aligned}
		\]
		The constant lost in the final inequality depends only on $s,t,$ and
		$\tau$.  The case $m=1$ is immediate.
		
		Finally, \eqref{eq:finite-matrix-unit} follows immediately from the
		proof of \eqref{eq:finite-matrix-unconditioned}: use the prescribed
		unit entry $\mathsf M_{ij}$ as the pivot, and, when $p=2$, note that
		reduction modulo $2^{m-1}$ preserves whether an entry is a unit.
	\end{proof}

	For $a\in(\Z/q\Z)^L$, define
	\begin{equation}\label{eq:complete-sum-majorant}
		\mathfrak s_q(a)
		:=
		\prod_{S\mid T\in\mathscr B_K}
		\abs{\operatorname{im}(2a_{S,T})}^{-\tau w_K}.
	\end{equation}
	
	We use the exponent
	$\tau:=d\lfloor K^2/4\rfloor/(2L)$
	introduced in the proof of Proposition~\ref{prop:minor arcs}.  Under
	the hypothesis $d\geq4K+4$, that proof shows that $\tau>K+2$.
	Then put
	$\eta:=(\tau-K)/2.$
	Since $\tau>K+2$, we have $\eta>1$.

	\begin{lemma}
		\label{lem:modular-numerator-sum}
		If $d\geq4K+4$, then $\eta>1$ and, for every $q\geq1$,
		\begin{equation}\label{eq:modular-numerator-bound}
			\sum_{\substack{a\in(\Z/q\Z)^L\\(a,q)=1}}
			\mathfrak s_q(a)
			\ll_{K,d}q^{-1-\eta}.
		\end{equation}
	\end{lemma}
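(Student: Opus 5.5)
Since $\eta=(\tau-K)/2$ and $\tau>K+2$ were already verified in the proof of Proposition~\ref{prop:minor arcs}, only \eqref{eq:modular-numerator-bound} needs proof. The plan is to reduce to prime powers by multiplicativity, bound the prime-power sums with Finner's inequality and the matrix moment lemma, and then multiply the local bounds back together.

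\emph{Multiplicativity.} By the Chinese remainder theorem, $(\Z/q\Z)^L\cong\prod_{p^m\| q}(\Z/p^m\Z)^L$. Under this isomorphism, the image of $2a_{S,T}$ in $(\Z/q\Z)^S$ splits as the product of the images of the local components. Hence $\mathfrak s_q(a)=\prod_{p^m\|q}\mathfrak s_{p^m}(a_p)$. The condition $(a,q)=1$ says exactly that, for each $p\mid q$, the local component $a_p$ is not $\equiv0\pmod p$. Therefore the sum in \eqref{eq:modular-numerator-bound} factors into local sums
\[
\Sigma(p^m):=\sum_{\substack{a\in(\Z/p^m\Z)^L\\ a\not\equiv0\,(p)}}\mathfrak s_{p^m}(a).
\]
It will suffice to prove $\Sigma(p^m)\leq C\,p^{-m(1+\eta')}$ for some $\eta'>\eta$ and $C=C(K,d)$. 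Indeed, the product of these local bounds is $C^{\omega(q)}q^{-1-\eta'}\ll_\epsilon q^{-1-\eta'+\epsilon}$, and this is $\ll q^{-1-\eta}$.

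\emph{Local bound.} Fix a prime power $p^m$. Since $a\not\equiv0\pmod p$, some coordinate $a_e$ is a unit, and a union bound over the $L$ edges gives $\Sigma(p^m)\leq\sum_e\Sigma_e$, where $\Sigma_e$ is the sum with the constraint $a_e\in(\Z/p^m\Z)^\times$. I rewrite $\Sigma_e$ as $p^{mL}$ times an average over $a$, and estimate the average with Lemma~\ref{lem:fractional-holder} on the counting probability space $(\Z/p^m\Z)^{\mathcal E}$. The cover condition is Lemma~\ref{lem:balanced-cover}, and the construction copies the proof of Proposition~\ref{prop:minor arcs}. For each balanced cut I take $f_{S,T}=\abs{\operatorname{im}(2a_{S,T})}^{-\tau}$, multiplied by $\1_{a_e\text{ unit}}$ when $e$ crosses the cut.

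Write $s=\abs S$, $t=\abs T$, so $s+t=K$ and $st=\lfloor K^2/4\rfloor$. Since $\tau>K+2>s+t-1$, the matrix moment lemma applies. Cuts not crossed by $e$ contribute $(p^{-mst})^{w_K}$ by \eqref{eq:finite-matrix-unconditioned}. Cuts crossed by $e$ contribute $(p^{-m[\tau+(s-1)(t-1)]})^{w_K}$ by \eqref{eq:finite-matrix-unit}, and $\tau+(s-1)(t-1)=st+(\tau-K+1)$. So each crossed cut gains an extra factor $p^{-m(\tau-K+1)w_K}$ over the unconstrained bound.

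By \eqref{eq:fractional-cover} the crossed cuts carry total weight $1$, and by \eqref{eq:cover-edge-count} the total weight of $st$ over all cuts is $L$. The average is therefore $\ll p^{-mL}p^{-m(\tau-K+1)}$, and so $\Sigma_e\ll p^{-m(\tau-K+1)}$. Since $\tau-K+1=1+2\eta>1+\eta$, this bound has room to spare. It yields the local estimate with $\eta'=2\eta$, and the constant depends only on $K$ and $d$.

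\emph{Main point of care.} The main point is uniformity. The constants in the matrix moment lemma must be independent of $p$ and $m$; this holds, including at $p=2$, by that lemma's statement. The local constant $C$ is then uniform, and the factor $C^{\omega(q)}$ is absorbed into the surplus $q^{-\eta}$ using $\omega(q)\ll\log q/\log\log q$. One should also check the trivial case $q=1$, where the sum consists of the single term $\mathfrak s_1(0)=1$.
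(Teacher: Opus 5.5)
Your proposal is correct and follows the paper's proof essentially step for step. It uses the same union bound over unit coordinates, Finner's inequality with the same cut functions $f_{S,T}$, and the two matrix-moment bounds to get the local estimate $p^{-m(1+2\eta)}$, then the Chinese remainder theorem factorization with $C^{\omega(q)}$ absorbed into the surplus $q^{-\eta}$; the only difference is that you factor over primes before proving the prime-power bound, while the paper does the prime-power case first.
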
	
	\begin{proof}
		The case $q=1$ is immediate.  First let $q=p^m$.  For each
		$e\in\mathcal E$, put
		\[
		I_e
		:=
		\E_{a\in(\Z/p^m\Z)^L}\!\left[
		\1_{a_e\in(\Z/p^m\Z)^\times}
		\mathfrak s_{p^m}(a)
		\right].
		\]
		
		Fix $e\in\mathcal E$. In applying
		Lemma~\ref{lem:fractional-holder}, take $J=\abs{\mathscr B_K}$ and
		label the indices $1,\ldots,J$ by the balanced cuts
		$S\mid T\in\mathscr B_K$. Thus, for the index $j$ corresponding to
		$S\mid T$, the set $\mathcal E_j$ in that lemma is the set of edges
		crossing $S\mid T$, and the weight $c_j$ is $w_K$. We write
		$f_{S,T}$ for the corresponding function $f_j$.
		
		For each balanced cut $S\mid T$, define
		\[
		f_{S,T}(a_{S,T})
		:=
		\begin{cases}
			\displaystyle
			\1_{a_e\in(\Z/p^m\Z)^\times}
			\abs{\operatorname{im}(2a_{S,T})}^{-\tau},
			& e\text{ crosses }S\mid T,\\[6pt]
			\displaystyle
			\abs{\operatorname{im}(2a_{S,T})}^{-\tau},
			& e\text{ does not cross }S\mid T.
		\end{cases}
		\]
		When $e$ crosses $S\mid T$, the coordinate $a_e$ is one of the
		coordinates in $a_{S,T}$, so the first function is indeed defined
		on the product of the coordinate spaces indexed by $\mathcal E_j$.
		
		For every $e'\in\mathcal E$, Lemma~\ref{lem:balanced-cover} gives
		\[
		\sum_{\substack{S\mid T\in\mathscr B_K\\
				e'\text{ crosses }S\mid T}}
		w_K
		=
		1,
		\]
		which is exactly the hypothesis of
		Lemma~\ref{lem:fractional-holder}. Moreover,
		\begin{align*}
			\prod_{S\mid T\in\mathscr B_K}
			f_{S,T}(a_{S,T})^{w_K}
			={}&
			\left(
			\prod_{\substack{S\mid T\in\mathscr B_K\\
					e\text{ crosses }S\mid T}}
			\1_{a_e\in(\Z/p^m\Z)^\times}^{\,w_K}
			\right)
			\prod_{S\mid T\in\mathscr B_K}
			\abs{\operatorname{im}(2a_{S,T})}^{-\tau w_K}=
			\1_{a_e\in(\Z/p^m\Z)^\times}
			\mathfrak s_{p^m}(a),
		\end{align*}
		where the last equality follows from
		\eqref{eq:fractional-cover}. Hence
		\[
		I_e
		=
		\E_{a\in(\Z/p^m\Z)^L}
		\prod_{S\mid T\in\mathscr B_K}
		f_{S,T}(a_{S,T})^{w_K}.
		\]
		Lemma~\ref{lem:fractional-holder} therefore gives
		\begin{align*}
			I_e
			\leq{}&
			\prod_{\substack{S\mid T\in\mathscr B_K\\
					e\text{ crosses }S\mid T}}
			\left(
			\E_{a_{S,T}}
			\left[
			\1_{a_e\in(\Z/p^m\Z)^\times}
			\abs{\operatorname{im}(2a_{S,T})}^{-\tau}
			\right]
			\right)^{w_K}\\
			&\times
			\prod_{\substack{S\mid T\in\mathscr B_K\\
					e\text{ does not cross }S\mid T}}
			\left(
			\E_{a_{S,T}}
			\abs{\operatorname{im}(2a_{S,T})}^{-\tau}
			\right)^{w_K},
		\end{align*}
		where each average in $a_{S,T}$ is normalized over
		$\operatorname{Mat}_{\abs S\times\abs T}(\Z/p^m\Z)$.
		
		We now estimate the averages on the right. Fix a balanced cut
		$S\mid T$. If $e$ crosses $S\mid T$, then
		\eqref{eq:finite-matrix-unit} gives
		\begin{align*}
			\E_{a_{S,T}}
			\left[
			\1_{a_e\in(\Z/p^m\Z)^\times}
			\abs{\operatorname{im}(2a_{S,T})}^{-\tau}
			\right]\ll_{K,d}
			p^{-m[\tau+(\abs S-1)(\abs T-1)]}
			=
			p^{-m(\abs S\abs T+1+2\eta)},
		\end{align*}
		where we used $\abs S+\abs T=K$ and
		$2\eta=\tau-K$. If $e$ does not cross $S\mid T$, then
		\eqref{eq:finite-matrix-unconditioned} gives
		\[
		\E_{a_{S,T}}
		\abs{\operatorname{im}(2a_{S,T})}^{-\tau}
		\ll_{K,d}
		p^{-m\abs S\abs T}.
		\]
		Consequently,
		\[
		I_e
		\ll_{K,d}
		p^{-mw_K\sum_{S\mid T\in\mathscr B_K}\abs S\abs T}
		p^{-m(1+2\eta)w_K
			\#\{S\mid T\in\mathscr B_K:e\text{ crosses }S\mid T\}}.
		\]
		By \eqref{eq:cover-edge-count} and \eqref{eq:fractional-cover},
		$I_e\ll_{K,d}p^{-m(L+1+2\eta)}.$
		
		If $(a,p^m)=1$, then $a_e$ is a unit for at least one
		$e\in\mathcal E$. Therefore
		\[
		\1_{(a,p^m)=1}
		\leq
		\sum_{e\in\mathcal E}
		\1_{a_e\in(\Z/p^m\Z)^\times},
		\]
		and hence
		\begin{align*}
			\sum_{\substack{a\in(\Z/p^m\Z)^L\\(a,p^m)=1}}
			\mathfrak s_{p^m}(a)
			=
			p^{mL}
			\E_{a\in(\Z/p^m\Z)^L}\!\left[
			\1_{(a,p^m)=1}\mathfrak s_{p^m}(a)
			\right]\leq
			p^{mL}\sum_{e\in\mathcal E}I_e
			\ll_{K,d}
			p^{-m(1+2\eta)}.
		\end{align*}
		
		Now write $q=\prod_{r=1}^u p_r^{m_r}.$
		Under the Chinese remainder theorem,
		$(\Z/q\Z)^L \cong \prod_{r=1}^u(\Z/p_r^{m_r}\Z)^L.$
		Write $a^{(r)}$ for the component of $a$ modulo $p_r^{m_r}$. Then
		\[
		(a,q)=1
		\quad\Longleftrightarrow\quad
		(a^{(r)},p_r^{m_r})=1
		\quad\text{for every }1\leq r\leq u.
		\]
		Moreover, for every balanced cut $S\mid T$, the map
		$2a_{S,T}$ corresponds under the Chinese remainder theorem to the
		product of the maps $2a_{S,T}^{(r)}$. Therefore
		\[
		\abs{\operatorname{im}(2a_{S,T})}
		=
		\prod_{r=1}^u
		\abs{\operatorname{im}(2a_{S,T}^{(r)})},
		\]
		and consequently
		$\mathfrak s_q(a) = \prod_{r=1}^u \mathfrak s_{p_r^{m_r}}(a^{(r)}).$
		It follows that
		\begin{align*}
			\sum_{\substack{a\in(\Z/q\Z)^L\\(a,q)=1}}
			\mathfrak s_q(a)
			=
			\prod_{r=1}^u
			\left(
			\sum_{\substack{
					a^{(r)}\in(\Z/p_r^{m_r}\Z)^L\\
					(a^{(r)},p_r^{m_r})=1}}
			\mathfrak s_{p_r^{m_r}}(a^{(r)})
			\right)\ll_{K,d}
			C^{\omega(q)}
			\prod_{r=1}^u p_r^{-m_r(1+2\eta)}=
			C^{\omega(q)}q^{-1-2\eta}
		\end{align*}
		for some $C=C(K,d)$. Since
		$C^{\omega(q)}\ll_{C,\eta}q^\eta$, this proves
		\eqref{eq:modular-numerator-bound}.
	\end{proof}
	
	\subsection{Integrating the major arc form}
	
	We equip $\Z^d\times(\Z/q\Z)^d$ with counting measure in the first
	factor and normalized counting measure in the second.  The estimates in
	this section give the following bound for the major arc form
	\eqref{eq:major arc-form}.
	
	\begin{proposition}
		\label{prop:major-operator}
		Assume $d\geq4K+4$, and let $q\geq1$.  Suppose that
		\[
		G_i\in L^2\bigl(\Z^d\times(\Z/q\Z)^d\bigr)
		\qquad(0\leq i\leq K-1).
		\]
		For each $i$, suppose that
		\[
		\operatorname{supp}G_i
		\subseteq
		[P]^d\times(\Z/q\Z)^d.
		\]
		Then
		\begin{equation}\label{eq:major-operator}
			\sum_{\substack{a\bmod q\\(a,q)=1}}
			\int_{\T^L}
			\abs{\mathcal M_{q,a,\theta}(G_0,\ldots,G_{K-1})}\,d\theta
			\ll_{K,d}
			q^{-1-\eta}P^{Kd/2-2L}
			\prod_{i=0}^{K-1}\norm{G_i}_2.
		\end{equation}
	\end{proposition}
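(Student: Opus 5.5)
The plan is to reproduce the minor-arc argument, this time on the product space $X_i=\Z^d\times(\Z/q\Z)^d$. Fix $a$ and $\theta$, and fix a balanced cut $S\mid T$. The phase $\mathcal Q_a(r)/q+\mathcal Q_\theta(u)$ splits along the cut as in \eqref{eq:phase-across-cut}. Its crossing part is
\[
-2\sum_{i\in S,j\in T}\bigl(a_{ij}r_i\cdot r_j/q+\theta_{ij}u_i\cdot u_j\bigr).
\]
Since each $G_i$ is supported in $[P]^d$ in the $u$-variable, we may restrict to $X_i=[P]^d\times(\Z/q\Z)^d$. Lemma~\ref{lem:cut-cauchy-schwarz} then bounds $\abs{\mathcal M_{q,a,\theta}}$ by $\prod_i\norm{G_i}_2$ times the norm of the cut operator. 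Because the kernel is a product of a residue factor and a lattice factor, this cut operator is exactly $\mathcal K_{q,a,d}^{S,T}\otimes\mathcal K_{P,d}^{S,T}(-2\theta_{S,T})$. Its norm factors, and Lemma~\ref{lem:finite-bilinear-operator} together with Lemma~\ref{lem:cut-tensorization} give
\[
\abs{\mathcal M_{q,a,\theta}}\leq \abs{\operatorname{im}(2a_{S,T})}^{-d/2}P^{Kd/2}m_P^{S,T}(-2\theta_{S,T})^d\prod_i\norm{G_i}_2 .
\]

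Next, take the geometric mean over $S\mid T\in\mathscr B_K$ with exponent $1/\abs{\mathscr B_K}$. Since $2\tau w_K=d/\abs{\mathscr B_K}$, the residue factors combine to $\mathfrak s_q(a)$ from \eqref{eq:complete-sum-majorant}, and the archimedean factors combine to $\prod_{S\mid T}m_P^{S,T}(-2\theta_{S,T})^{2\tau w_K}$. The important point is that the resulting bound is a product of a function of $a$ alone and a function of $\theta$ alone. We therefore sum over $a$ and integrate over $\theta$ separately.

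The sum over $a$ is Lemma~\ref{lem:modular-numerator-sum}, which gives $\ll q^{-1-\eta}$. For the integral over $\theta\in\T^L$, note that $\theta\mapsto-2\theta$ preserves Haar measure coordinatewise. Then Lemma~\ref{lem:fractional-holder}, with the cover from Lemma~\ref{lem:balanced-cover}, applies together with Lemma~\ref{lem:rectangular-moment} at scale $P$. Since $2\tau>2K+4$ and $m\leq1$, the integral is bounded by $\prod_{S\mid T}(P^{-2\abs S\abs T})^{w_K}=P^{-2L}$ by \eqref{eq:cover-edge-count}; this is Corollary~\ref{cor:intro-fractional-mean-value}. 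Multiplying the two bounds gives \eqref{eq:major-operator}.

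The step that needs care is applying Lemma~\ref{lem:cut-cauchy-schwarz} correctly. The measure has to be normalized in $r$ so that it matches the $\E_{r\bmod q}$ in the definition of $\mathcal M$ and in the operator $\mathcal K^{S,T}_{q,a,d}$. We also have to check that the tensor product norm identity holds for this mixed operator, where the kernel depends on both $u$ and $r$. It does, because the kernel is literally the product of the two kernels, acting on $L^2$ of a product space. A second check is that the $\ell^2$ norms of the $G_i$ appearing on the right are the ones taken with respect to counting measure times normalized counting measure, which is the convention stated just before the proposition. Everything else reuses arguments already written out in the proof of Proposition~\ref{prop:minor arcs}.
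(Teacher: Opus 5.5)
Your proposal is correct and follows the paper's proof essentially step for step. You apply the cut reduction on $[P]^d\times(\Z/q\Z)^d$ with counting times normalized measure, factor the cut operator as $\mathcal K_{P,d}^{S,T}(-2\theta_{S,T})\otimes\mathcal K_{q,a,d}^{S,T}$, and take the geometric mean over balanced cuts to separate $\mathfrak s_q(a)$ from the archimedean product; you then bound the $a$-sum by Lemma~\ref{lem:modular-numerator-sum} and the $\theta$-integral by fractional H\"older with Lemma~\ref{lem:rectangular-moment}, which is exactly what the paper does.
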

	
	\begin{proof}
		Fix a balanced cut $S\mid T$. For each $i$, take
		$X_i=[P]^d\times(\Z/q\Z)^d$.  Use the three phase pieces
		\[
		\begin{aligned}
			\mathcal Q^S(u_S,r_S)
			&:=
			\mathcal Q_\theta^S(u_S)+\frac{\mathcal Q_a^S(r_S)}q,\\
			\mathcal Q^{S,T}(u_S,r_S,u_T,r_T)
			&:=
			\mathcal Q_\theta^{S,T}(u_S,u_T)
			+\frac{\mathcal Q_a^{S,T}(r_S,r_T)}q,\\
			\mathcal Q^T(u_T,r_T)
			&:=
			\mathcal Q_\theta^T(u_T)+\frac{\mathcal Q_a^T(r_T)}q.
		\end{aligned}
		\]
		The cut operator in
		Lemma~\ref{lem:cut-cauchy-schwarz} is therefore
		\[
		\begin{aligned}
			\mathcal K^{S,T}:
			L^2\bigl(([P]^d\times(\Z/q\Z)^d)^T\bigr)
			&\longrightarrow
			L^2\bigl(([P]^d\times(\Z/q\Z)^d)^S\bigr),\\
			\bigl(\mathcal K^{S,T}H\bigr)(u_S,r_S)
			&:=
			\E_{r_T\bmod q}\sum_{u_T\in([P]^d)^T}
			e_q\!\left(\mathcal Q_a^{S,T}(r_S,r_T)\right)
			e\!\left(\mathcal Q_\theta^{S,T}(u_S,u_T)\right)
			H(u_T,r_T).
		\end{aligned}
		\]
		Thus
		\[
		\abs{\mathcal M_{q,a,\theta}(G_0,\ldots,G_{K-1})}
		\leq
		\norm{\mathcal K^{S,T}}_{2\to2}
		\prod_{i=0}^{K-1}\norm{G_i}_2.
		\]
		
		We now identify the norm of $\mathcal K^{S,T}$.  The product structure of its kernel gives
		\begin{equation}\label{eq:major-cut-tensor}
			\mathcal K^{S,T}
			=
			\mathcal K_{P,d}^{S,T}(-2\theta_{S,T})
			\otimes
			\mathcal K_{q,a,d}^{S,T}.
		\end{equation}
		Hence
		\eqref{eq:major-cut-tensor}, multiplicativity of the operator norm under
		tensor products, and
		Lemmas~\ref{lem:finite-bilinear-operator} and
		\ref{lem:cut-tensorization} give
		\[
		\begin{aligned}
			\norm{\mathcal K^{S,T}}_{2\to2}
			&=
			\norm{
				\mathcal K_{P,d}^{S,T}\!\left(-2\theta_{S,T}\right)
			}_{2\to2}
			\norm{\mathcal K_{q,a,d}^{S,T}}_{2\to2}\\
			&=
			P^{Kd/2}
			m_P^{S,T}\!\left(-2\theta_{S,T}\right)^d
			\abs{\operatorname{im}(2a_{S,T})}^{-d/2}.
		\end{aligned}
		\]
		
		Since
		$d/\abs{\mathscr B_K}=2\tau w_K,$
		taking the geometric mean of these bounds over the balanced cuts gives
		\[
		\abs{\mathcal M_{q,a,\theta}(G_0,\ldots,G_{K-1})}
		\leq
		P^{Kd/2}
		\prod_{S\mid T\in\mathscr B_K}
		\left[
		\abs{\operatorname{im}(2a_{S,T})}^{-1/2}
		m_P^{S,T}\!\left(-2\theta_{S,T}\right)
		\right]^{2\tau w_K}
		\prod_{i=0}^{K-1}\norm{G_i}_2.
		\]
		By the definition \eqref{eq:complete-sum-majorant},
		\begin{align*}
			\abs{\mathcal M_{q,a,\theta}(G_0,\ldots,G_{K-1})}\leq
			P^{Kd/2}\mathfrak s_q(a)
			\prod_{S\mid T\in\mathscr B_K}
			m_P^{S,T}\!\left(-2\theta_{S,T}\right)^{2\tau w_K}
			\prod_{i=0}^{K-1}\norm{G_i}_2.
		\end{align*}
		
		It remains to integrate the product of the
		$m_P^{S,T}$-terms.  In applying
		Lemma~\ref{lem:fractional-holder}, take $J=\abs{\mathscr B_K}$ and
		label the indices $1,\ldots,J$ by the balanced cuts
		$S\mid T\in\mathscr B_K$.  Thus, for the index $j$ corresponding to
		$S\mid T$, the set $\mathcal E_j$ in that lemma is the set of edges
		crossing $S\mid T$, and the weight $c_j$ is $w_K$.  We write
		$f_{S,T}$ for the corresponding function $f_j$ and set
		\[
		f_{S,T}(\theta_{S,T})
		:=
		m_P^{S,T}\!\left(-2\theta_{S,T}\right)^{2\tau}.
		\]
		For every $e\in\mathcal E$, Lemma~\ref{lem:balanced-cover} gives
		\[
		\sum_{\substack{S\mid T\in\mathscr B_K\\
				e\text{ crosses }S\mid T}}
		w_K
		=
		1,
		\]
		which is exactly the hypothesis of
		Lemma~\ref{lem:fractional-holder}.  Moreover,
		\[
		\prod_{S\mid T\in\mathscr B_K}
		f_{S,T}(\theta_{S,T})^{w_K}
		=
		\prod_{S\mid T\in\mathscr B_K}
		m_P^{S,T}\!\left(-2\theta_{S,T}\right)^{2\tau w_K}.
		\]
		Lemma~\ref{lem:fractional-holder} therefore gives
		\begin{align*}
			&\int_{\T^L}
			\prod_{S\mid T\in\mathscr B_K}
			m_P^{S,T}\!\left(-2\theta_{S,T}\right)^{2\tau w_K}\,d\theta\leq
			\prod_{S\mid T\in\mathscr B_K}
			\left(
			\int_{\T^{\abs S\abs T}}
			m_P^{S,T}\!\left(-2\theta_{S,T}\right)^{2\tau}
			\,d\theta_{S,T}
			\right)^{w_K}.
		\end{align*}
		
		Multiplication by $-2$ preserves normalized Haar measure on
		$\T^{\abs S\abs T}$.  Since $0\leq m_P^{S,T}\leq1$ and
		$2\tau>2K+4$, Lemma~\ref{lem:rectangular-moment} gives
		\[
		\int_{\T^{\abs S\abs T}}
		m_P^{S,T}\!\left(-2\theta_{S,T}\right)^{2\tau}
		\,d\theta_{S,T}
		\ll_K P^{-2\abs S\abs T}.
		\]
		Using \eqref{eq:cover-edge-count}, we therefore obtain
		\[
		\int_{\T^L}
		\prod_{S\mid T\in\mathscr B_K}
		m_P^{S,T}\!\left(-2\theta_{S,T}\right)^{2\tau w_K}\,d\theta
		\ll_K P^{-2L}.
		\]
		Summing over $a$ using Lemma~\ref{lem:modular-numerator-sum} and applying
		this estimate gives
		\[
		\sum_{\substack{a\bmod q\\(a,q)=1}}
		\int_{\T^L}
		\abs{\mathcal M_{q,a,\theta}(G_0,\ldots,G_{K-1})}\,d\theta
		\ll_{K,d}
		q^{-1-\eta}P^{Kd/2-2L}
		\prod_{i=0}^{K-1}\norm{G_i}_2.
		\]
		This is \eqref{eq:major-operator}.
	\end{proof}
	
	\subsection{Proof of the major arc estimate}
	
	We can now prove the major arc estimate stated at the beginning of the
	section. It will simplify the argument to cover $\mathfrak M_Q$ by
	boxes centered at rational points $a/q$, with a single denominator
	$q$ shared by all coordinates. For $1\leq q\leq2Q^L$ and
	$a\in(\Z/q\Z)^L$ with $(a,q)=1$, put
	\[
	\mathfrak B_Q(a/q)
	:=
	\prod_{e\in\mathcal E}
	\left\{
	\beta_e\in\T:
	\norm{\beta_e-a_e/q}_{\T}
	\leq\frac{Q}{R^2}
	\right\}.
	\]

	\begin{lemma}
		\label{lem:major arc-cover}
		Assume $2\leq Q\leq R$.  Then
		\begin{equation}\label{eq:major arc-cover}
			\mathfrak M_Q
			\subseteq
			\bigcup_{1\leq q\leq2Q^L}
			\ \bigcup_{\substack{a\in(\Z/q\Z)^L\\(a,q)=1}}
			\mathfrak B_Q(a/q).
		\end{equation}
	\end{lemma}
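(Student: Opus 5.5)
The plan is to unwind the definition of $\mathfrak M_Q$ one coordinate at a time, undo the factor $-2$, and then put all coordinates over one common denominator. The only point needing care is that halving a point of $\T$ is two-valued, which is where the factor $2$ in $2Q^L$ comes from.

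First I unpack the hypothesis. Let $\beta\in\mathfrak M_Q$. By the definition of $\mathfrak M_Q$, $\mathfrak M_Q^{(L)}$ and \eqref{eq:one-dimensional-major arcs}, for each $e\in\mathcal E$ there are $1\leq q_e\leq Q$ and $a_e$ coprime to $q_e$ with
\[
-2\beta_e=\frac{a_e}{q_e}+\delta_e\pmod 1,
\qquad
\abs{\delta_e}\leq\frac{Q}{q_eR^2}.
\]

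Next I solve for $\beta_e$. The congruence $2x\equiv y\pmod 1$ has exactly the two solutions $x=y/2$ and $x=y/2+1/2$ modulo $1$. Hence for some $k_e\in\{0,1\}$,
\[
\beta_e\equiv\frac{-a_e+k_eq_e}{2q_e}-\frac{\delta_e}{2}\pmod 1 .
\]
Write $b_e:=-a_e+k_eq_e$ and $\varepsilon_e:=-\delta_e/2$. Then $\abs{\varepsilon_e}\leq Q/(2q_eR^2)\leq Q/R^2$. In particular $\norm{\beta_e-b_e/(2q_e)}_{\T}\leq Q/R^2$.

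Then I pass to a common denominator. Put $q':=\operatorname{lcm}_{e\in\mathcal E}(2q_e)$. This divides $2\prod_e q_e$, so $q'\leq 2Q^L$. Write $b_e/(2q_e)=c_e/q'$ with $c_e\in\Z$. Set $g:=\gcd\bigl(q',(c_e)_{e\in\mathcal E}\bigr)$, and define $q:=q'/g$ and $a_e:=c_e/g \bmod q$. Then $a=(a_e)_e\in(\Z/q\Z)^L$ satisfies $(a,q)=1$, and $1\leq q\leq q'\leq 2Q^L$. Since $a_e/q\equiv c_e/q'\equiv b_e/(2q_e)\pmod 1$, each coordinate obeys $\norm{\beta_e-a_e/q}_{\T}\leq Q/R^2$. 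Therefore $\beta\in\mathfrak B_Q(a/q)$, which proves \eqref{eq:major arc-cover}.

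The degenerate case in which every $c_e\equiv0\pmod{q'}$ is covered by the same construction. There $g=q'$, so $q=1$ and $a=0$, and $(0,1)=1$ holds by the convention on $(a,q)$. I do not expect a genuine obstacle; the only easy-to-miss point is the two-valued halving, which costs the factor $2$ in $2Q^L$.
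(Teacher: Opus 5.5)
Your proof is correct and follows the paper's argument essentially step for step: unpack each coordinate of $-2\beta$, take the correct one of the two halving preimages (the paper picks the nearer one, while you fix it through the real lift $\delta_e$, which is equivalent), and pass to the least common denominator, which divides $2\operatorname{lcm}_e q_e\leq 2Q^L$. The only blemish is that you reuse $a_e$ for both the original numerators and the final reduced ones; this is purely notational.
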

	\begin{proof}
		Fix $\beta\in\mathfrak M_Q$. For every $e\in\mathcal E$, choose
		$1\leq q_e\leq Q$ and $b_e\in\Z/q_e\Z$, with $(b_e,q_e)=1$, such that
		\[
		\norm{-2\beta_e-b_e/q_e}_{\T}
		\leq\frac{Q}{q_eR^2}.
		\]
		The two preimages of $b_e/q_e$ under $x\mapsto-2x$ are $-b_e/(2q_e)$ and $(q_e-b_e)/(2q_e)$. Choose the one nearer to $\beta_e$ and denote it by $\rho_e$. Then $-2\rho_e=b_e/q_e\in \T$
		and
		\[
		\norm{\beta_e-\rho_e}_{\T}
		=
		\frac12\norm{-2\beta_e-b_e/q_e}_{\T}
		\leq\frac{Q}{2q_eR^2}.
		\]
		Notice that the $\rho_e$
		divides $2q_e$.
		
		Put $\rho:=(\rho_e)_{e\in\mathcal E}$, and let $q$ be the least common
		denominator of its coordinates. Then
		\[
		q\mid\operatorname{lcm}_{e\in\mathcal E}(2q_e)
		=2\operatorname{lcm}_{e\in\mathcal E}q_e,
		\]
		and hence
		\[
		q\leq2\prod_{e\in\mathcal E}q_e\leq2Q^L.
		\]
		Write $\rho=a/q$ with $a\in(\Z/q\Z)^L$. Since $q$ is the least common
		denominator, $(a,q)=1$. Finally, for every $e\in\mathcal E$,
		\[
		\norm{\beta_e-a_e/q}_{\T}
		=
		\norm{\beta_e-\rho_e}_{\T}
		\leq\frac{Q}{2q_eR^2}
		\leq\frac{Q}{R^2}.
		\]
		Therefore $\beta\in\mathfrak B_Q(a/q)$, proving
		\eqref{eq:major arc-cover}.
	\end{proof}
	
	\begin{proof}[Proof of Proposition~\ref{prop:global-major}]
		By definition,
		\[
		\Lambda_N^{\mathfrak M_Q}(F_0,\ldots,F_{K-1})
		=
		\int_{\mathfrak M_Q}
		\widehat w_R\bigl(\mathcal Q_\beta(v)\bigr)
		\mathcal S_\beta(F_0,\ldots,F_{K-1})\,d\beta.
		\]
		Lemma~\ref{lem:major arc-cover}, the triangle inequality, and
		\eqref{eq:scale-fourier-bound} therefore give
		\begin{align}
			\abs{\Lambda_N^{\mathfrak M_Q}(F_0,\ldots,F_{K-1})}
			\ll_{\Delta}{}
			\sum_{1\leq q\leq2Q^L}
			\ \sum_{\substack{a\in(\Z/q\Z)^L\\(a,q)=1}}
			\int_{[-Q/R^2,Q/R^2]^L}
			\abs{\mathcal S_{a/q+\theta}(F_0,\ldots,F_{K-1})}
			\,d\theta.
			\label{eq:major-cover-reduction}
		\end{align}
		
		For every $q$, $a$, and $\theta$ occurring in
		\eqref{eq:major-cover-reduction}, put
		\[
		E_{q,a,\theta}
		:=
		\mathcal S_{a/q+\theta}(F_0,\ldots,F_{K-1})
		-
		\mathcal M_{q,a,\theta}
		(G_{0,q},\ldots,G_{K-1,q}).
		\]
		Lemma~\ref{lem:major arc-approximation} gives
		\begin{equation}\label{eq:major-error-pointwise}
			\abs{E_{q,a,\theta}}
			\ll_{K,d,\sigma}
			Q^{-L(L+2)-2}N^{Kd/2}
			\prod_{i=0}^{K-1}\norm{F_i}_{\ell^2(\Z^d)}.
		\end{equation}
		
		Put
		\begin{equation}\label{eq:major arc-support-scale}
			P_q:=N+qN_q+q.
		\end{equation}
		Then
		\[
		\operatorname{supp}G_{i,q}
		\subseteq[P_q]^d\times(\Z/q\Z)^d.
		\]
		Indeed, if $G_{i,q}(u,r)\neq0$, then
		\eqref{eq:major arc-input} and the definition of $\mu_q$ give
		$z\in[N]^d$, $h\in\{0,\ldots,N_q-1\}^d$, and
		$s\in\{0,\ldots,q-1\}^d$ such that
		\[
		F_i(z)\neq0,
		\qquad
		u=z+qh+s.
		\]
		Proposition~\ref{prop:major-operator} therefore gives
		\begin{align}
			\sum_{\substack{a\in(\Z/q\Z)^L\\(a,q)=1}}
			\int_{[-Q/R^2,Q/R^2]^L}
			\abs{\mathcal M_{q,a,\theta}
				(G_{0,q},\ldots,G_{K-1,q})}\,d\theta
			&\leq
			\sum_{\substack{a\bmod q\\(a,q)=1}}
			\int_{\T^L}
			\abs{\mathcal M_{q,a,\theta}
				(G_{0,q},\ldots,G_{K-1,q})}\,d\theta
			\notag\\
			&\ll_{K,d}
			q^{-1-\eta}P_q^{Kd/2-2L}
			\prod_{i=0}^{K-1}\norm{G_{i,q}}_2.
			\label{eq:major-operator-application}
		\end{align}
		
		Observe that
		\begin{equation}\label{eq:major-L2-LK}
			\norm{G_{i,q}}_2
			\leq
			P_q^{d(1/2-1/K)}\norm{G_{i,q}}_K.
		\end{equation}
		For fixed $u\in\Z^d$, let $y=y(u,r)$ be the point in
		\eqref{eq:major arc-input}. The map
		$r\longmapsto s:=u-y(u,r)$
		is a bijection from $(\Z/q\Z)^d$ onto
		$\{0,\ldots,q-1\}^d$. Therefore
		\begin{align}
			\norm{G_{i,q}}_K^K
			&=
			\sum_{u\in\Z^d}\E_{r\bmod q}
			\abs{G_{i,q}(u,r)}^K
			\notag\\
			&=
			\frac1{q^d}
			\sum_{s\in\{0,\ldots,q-1\}^d}
			\sum_{u\in\Z^d}
			\abs{(\mu_q*F_i)(u-s)}^K
			\notag\\
			&=
			\norm{\mu_q*F_i}_{\ell^K(\Z^d)}^K.
			\label{eq:major-input-norm}
		\end{align}
		Moreover,
		\[
		qN_q\leq R,
		\qquad
		q\leq2Q^L\leq R,
		\qquad
		N\leq2\sigma^{-1}R.
		\]
		Here the first inequality follows from
		\eqref{eq:major-smoothing-scale}, the second from
		\eqref{eq:nonterminal-scale}, and the third from
		$R=\lfloor\sigma N\rfloor$ and $N\geq2/\sigma$. Thus
		$P_q\ll_\sigma R.$
		Equations \eqref{eq:major-L2-LK} and
		\eqref{eq:major-input-norm} give
		\[
		P_q^{Kd/2-2L}
		\prod_{i=0}^{K-1}\norm{G_{i,q}}_2
		\leq
		P_q^{(K-1)d-2L}
		\prod_{i=0}^{K-1}
		\norm{\mu_q*F_i}_{\ell^K(\Z^d)}.
		\]
		Consequently, summing \eqref{eq:major-operator-application} over
		$q\leq2Q^L$ gives
		\begin{align}
			&\sum_{1\leq q\leq2Q^L}
			\ \sum_{\substack{a\in(\Z/q\Z)^L\\(a,q)=1}}
			\int_{[-Q/R^2,Q/R^2]^L}
			\abs{\mathcal M_{q,a,\theta}
				(G_{0,q},\ldots,G_{K-1,q})}\,d\theta
			\notag\\
			&\qquad\ll_{K,d,\sigma}
			R^{(K-1)d-2L}
			\sum_{q\leq2Q^L}q^{-1-\eta}
			\prod_{i=0}^{K-1}
			\norm{\mu_q*F_i}_{\ell^K(\Z^d)}.
			\label{eq:major-M-sum}
		\end{align}
		
		For each $q$,
		\[
		\#\left\{
		a\in(\Z/q\Z)^L:(a,q)=1
		\right\}
		\leq q^L,
		\]
		and the measure of $[-Q/R^2,Q/R^2]^L$ is
		$(2Q/R^2)^L$. Hence
		\begin{align*}
			&\sum_{1\leq q\leq2Q^L}
			\ \sum_{\substack{a\in(\Z/q\Z)^L\\(a,q)=1}}
			\int_{[-Q/R^2,Q/R^2]^L}d\theta\leq
			\left(\frac{2Q}{R^2}\right)^L
			\sum_{q\leq2Q^L}q^L
			\ll_L Q^{L(L+2)}R^{-2L}.
		\end{align*}
		It follows from \eqref{eq:major-error-pointwise} that
		\begin{align}
			\sum_{1\leq q\leq2Q^L}
			\ \sum_{\substack{a\in(\Z/q\Z)^L\\(a,q)=1}}
			\int_{[-Q/R^2,Q/R^2]^L}
			\abs{E_{q,a,\theta}}\,d\theta
			&\ll_{K,d,\sigma}
			Q^{-L(L+2)-2}N^{Kd/2}
			\cdot Q^{L(L+2)}R^{-2L}
			\prod_{i=0}^{K-1}\norm{F_i}_{\ell^2(\Z^d)}
			\notag\\
			&\ll_{K,d,\sigma}
			Q^{-2}R^{Kd/2-2L}
			\prod_{i=0}^{K-1}\norm{F_i}_{\ell^2(\Z^d)}.
			\label{eq:major-E-sum}
		\end{align}
		Since
		\[
		\mathcal S_{a/q+\theta}(F_0,\ldots,F_{K-1})
		=
		\mathcal M_{q,a,\theta}(G_{0,q},\ldots,G_{K-1,q})
		+E_{q,a,\theta},
		\]
		the bounds \eqref{eq:major-cover-reduction},
		\eqref{eq:major-M-sum}, and \eqref{eq:major-E-sum} prove
		\eqref{eq:global-major-bound}.
	\end{proof}
	
	The first term in \eqref{eq:global-major-bound} retains the averages
	$\mu_q*F_i$, which are used in the density-increment argument of the
	next section.  After decreasing the exponent in
	Proposition~\ref{prop:minor arcs} if necessary, assume from now on that
	$0<\gamma\leq1$; then the $Q^{-2}$ error above is also
	$O(Q^{-\gamma})$.
	
	\section{The density increment and completion of the proof}
	\label{sec:density-increment}
	\label{sec:completion}
	
	The following proposition gives the dichotomy that will be iterated in
	the proof of Theorem~\ref{thm:main}.
	
	\begin{proposition}[Density-increment]
		\label{prop:density-increment}
		For the constant $\sigma$ fixed in Section \ref{sec:counting-form}, there are
		constants $0<\kappa<1$ and $B_0\geq1$, depending only on $\Delta$ and
		$d$, with the following property. Let $A\subseteq[N]^d$ be nonempty
		and contain no nondegenerate similar copy of $\Delta$, and set
		$\alpha:=\abs A/N^d.$
		Then one of the following alternatives holds:
		\begin{enumerate}
			\item[\textup{(1)}]
			$N\ll_{\Delta,d}\alpha^{-B_0}.$
			
			\item[\textup{(2)}]
			There are an integer $N'\geq1$ and a set $A'\subseteq[N']^d$,
			also containing no nondegenerate similar copy of $\Delta$, such that
			$N'\gg_{\Delta,d}\alpha^{B_0}N$
			and
			$\abs{A'}/(N')^d>(1+\kappa)\alpha.$
		\end{enumerate}
	\end{proposition}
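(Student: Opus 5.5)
We prove the dichotomy by comparing the large mean-zero term \eqref{eq:large-mean-zero-term} with the minor- and major-arc estimates. Fix $Q:=C_1\alpha^{-C_2}$ with $C_1,C_2$ large depending on $\Delta,d$. If \eqref{eq:nonterminal-scale} fails, or $Q>R$, then $N\ll_{\Delta,d}\alpha^{-B_0}$ and alternative (1) holds. So assume both.

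\textbf{Step 1: split the count.} Take the $F_i$ from \eqref{eq:large-mean-zero-term}, with $s$ copies of $g$ and the rest $\1_{[N]^d}$. Write $\Lambda_N=\Lambda_N^{\mathfrak m_Q}+\Lambda_N^{\mathfrak M_Q}$. By \eqref{eq:g-global-bounds} we have $\prod_i\norm{F_i}_2\le\alpha^{s/2}N^{Kd/2}$. Proposition~\ref{prop:minor arcs} and the $Q^{-2}$ term of Proposition~\ref{prop:global-major} together contribute $\ll Q^{-\gamma}\alpha^{s/2}N^{Kd/2}R^{Kd/2-2L}$, and since $N\asymp_\sigma R$ this is $\asymp Q^{-\gamma}\alpha^{s/2}N^dR^{(K-1)d-2L}$. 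Choosing $C_2$ with $\gamma C_2\ge K$ and $C_1$ large makes this at most half the lower bound $\alpha^sN^dR^{(K-1)d-2L}$. Hence
\[
\sum_{q\le 2Q^L}q^{-1-\eta}\prod_{i}\norm{\mu_q*F_i}_{\ell^K}\gg\alpha^sN^d.
\]

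\textbf{Step 2: extract one modulus.} For the untouched factors, $\norm{\mu_q*\1_{[N]^d}}_K\le N^{d/K}$. Write $M:=\max_{q\le2Q^L}q^{-\eta'}\norm{\mu_q*g}_K^K$, where $0<\eta'<\eta$ is chosen so that $\sum_q q^{-1-\eta+s\eta'/K}\le\sum_q q^{-1-(\eta-\eta')}<\infty$. Then some $q$ satisfies $\norm{\mu_q*g}_K^K\gg q^{\eta'}\alpha^KN^d\ge\alpha^KN^d$; in particular $\norm{\mu_q*g}_K^K\gg\alpha^K N^d$ with $q\le2Q^L\ll\alpha^{-C}$. (Even without the $q^{\eta'}$ gain, summability of $q^{-1-\eta}$ alone suffices.)

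\textbf{Step 3: positive--negative part.} Set $\phi=\mu_q*g$. We have $\phi\ge-\alpha$, $\abs{\phi}\le1$, and $\sum_x\phi=\sum g=0$. Also $\phi$ is supported in a box of side $N+qN_q\le 2N$. Suppose $\phi\le\kappa\alpha$ everywhere. Then
\[
\sum\abs\phi^K\le(\alpha)^{K-1}\sum\abs\phi=2\alpha^{K-1}\sum\phi_+\le 2\alpha^{K-1}\kappa\alpha(2N)^d,
\]
which contradicts Step 2 once $\kappa$ is small. So there is $y$ with $\phi(y)>\kappa\alpha$.

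We also need $y$ to lie where $\mu_q*\1_{[N]^d}(y)=1$, i.e.\ $y\in[1+q(N_q-1),N]^d$. To arrange this, restrict the sums to that region. The boundary layer has size $\ll qN_q N^{d-1}\le RQ^{-L(L+2)-3}N^{d-1}$, which is $\ll \alpha^{K+1}N^d$ by the choice of $Q$, so it cannot carry the $\ell^K$ mass; apply the same argument to the interior. At such $y$, $\mu_q*\1_A(y)=\alpha+\phi(y)>(1+\kappa)\alpha$.

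\textbf{Step 4: rescale.} Let $N':=N_q\gg\alpha^{C}N$ and $A':=\{h\in[N']^d: y-q(h-\mathbf 1)\in A\}$, after reflecting coordinates. Then $\abs{A'}/(N')^d>(1+\kappa)\alpha$. A similar copy of $\Delta$ in $A'$ maps under the affine similarity $h\mapsto y-q(h-\mathbf1)$ to a similar copy in $A$, so $A'$ is $\Delta$-free.

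The main obstacle is the bookkeeping in Step 3. The boundary region and the requirement that the surviving point is interior must be compatible with the $\ell^K$ lower bound, and this is what forces $Q^{-L(L+2)-3}$ in $N_q$ to beat the relevant powers of $\alpha$.
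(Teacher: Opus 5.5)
Your proposal is correct and follows essentially the same route as the paper: $Q$ a fixed power of $\alpha^{-1}$ times a large constant, absorption of the minor-arc and $Q^{-2}$ error terms, summability of $q^{-1-\eta}$ to find one $q$ with $\norm{\mu_q*g}_{\ell^K}^K\gg\alpha^KN^d$, the boundary-layer bound coming from $qN_q\le RQ^{-L(L+2)-3}$, and the same similarity map $h\mapsto y-q(h-\mathbf 1)$ with $N'=N_q$. Your Step~3 argues by contradiction: if $\phi\le\kappa\alpha$ on the interior $I$, then $\abs{\phi}\le\alpha$ there, so $\sum_I\abs{\phi}^K\le\alpha^{K-1}\sum_I\abs{\phi}$; this is a compressed form of the paper's case split on the positive versus negative $K$-th moment, and when you ``apply the same argument to the interior'' you should record that $\sum_I\phi_-\le\sum_x\phi_-=\sum_x\phi_+\le\sum_I\phi_++\abs{B}$ (with $B$ the boundary layer), which your bound on $\abs{B}$ makes harmless.
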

	
	\begin{proof}
		Let $B_0\geq1$, $Q_0\geq2$, and $0<\kappa<1$ be constants to be
		chosen later depending only on $\Delta$ and $d$. Put
		$Q:=Q_0\alpha^{-B_0/(L(L+3)+3)}.$
		If $N<N_0$, then alternative~\textup{(1)} holds. Otherwise
		$R\geq\sigma N/2$, and if $R<4Q^{L(L+3)+3}$, then
		\[
		N\ll_{\Delta,d}Q^{L(L+3)+3}
		=Q_0^{L(L+3)+3}\alpha^{-B_0}
		\ll_{\Delta,d}\alpha^{-B_0},
		\]
		so alternative~\textup{(1)} again holds. We may therefore assume
		\eqref{eq:nonterminal-scale}. In particular, $2\leq Q\leq R$, and
		the major- and minor arc estimates apply.
		
		Let $\mathcal B$ and $s=\abs{\mathcal B}$ be supplied by
		\eqref{eq:large-mean-zero-term}, and take $F_i=g$ for
		$i\in\mathcal B$ and $F_i=\1_{[N]^d}$ otherwise. Then
		\[
		\alpha^sN^dR^{(K-1)d-2L}
		\ll_{\Delta,d}
		\abs{\Lambda_N^{\mathfrak M_Q}(F_0,\ldots,F_{K-1})}
		+\abs{\Lambda_N^{\mathfrak m_Q}(F_0,\ldots,F_{K-1})}.
		\]
		The bounds in \eqref{eq:g-global-bounds} give
		\[
		\begin{aligned}
			\prod_{i=0}^{K-1}\norm{F_i}_{\ell^2(\Z^d)}
			&=
			\norm{g}_{\ell^2(\Z^d)}^s
			\norm{\1_{[N]^d}}_{\ell^2(\Z^d)}^{K-s}\\
			&\leq
			\alpha^{s/2}N^{Kd/2}.
		\end{aligned}
		\]
		Since $\mu_q$ is a probability measure, Young's inequality gives
		\[
		\norm{\mu_q*\1_{[N]^d}}_{\ell^K(\Z^d)}
		\leq
		\norm{\mu_q}_{\ell^1(\Z^d)}
		\norm{\1_{[N]^d}}_{\ell^K(\Z^d)}
		=
		N^{d/K}.
		\]
		Consequently,
		\[
		\begin{aligned}
			\prod_{i=0}^{K-1}\norm{\mu_q*F_i}_{\ell^K(\Z^d)}
			&=
			\norm{\mu_q*g}_{\ell^K(\Z^d)}^s
			\norm{\mu_q*\1_{[N]^d}}_{\ell^K(\Z^d)}^{K-s}\\
			&\leq
			N^{d(K-s)/K}
			\left(
			\sum_{x\in\Z^d}\abs{\mu_q*g(x)}^K
			\right)^{s/K}\\
			&=
			N^d
			\left(
			\frac1{N^d}
			\sum_{x\in\Z^d}\abs{\mu_q*g(x)}^K
			\right)^{s/K},
		\end{aligned}
		\]
		where $d(K-s)/K+ds/K=d$.
		
		Since $R\asymp_{\Delta}N$ and $Q^{-2}\leq Q^{-\gamma}$,
		Propositions~\ref{prop:minor arcs} and \ref{prop:global-major} imply
		\begin{equation}\label{eq:density-increment-arc-bound}
			\alpha^s
			\ll_{\Delta,d}
			\sum_{q\leq2Q^L}q^{-1-\eta}
			\left(
			\frac1{N^d}\sum_{x\in\Z^d}\abs{\mu_q*g(x)}^K
			\right)^{s/K}
			+Q^{-\gamma}\alpha^{s/2}.
		\end{equation}
		Assuming $B_0$ is sufficiently large depending only on $K$ and $d$,
		the exponent of $\alpha$ below is at least $s$, and hence
		\[
		Q^{-\gamma}\alpha^{s/2}
		=
		Q_0^{-\gamma}
		\alpha^{\gamma B_0/(L(L+3)+3)+s/2}
		\leq
		Q_0^{-\gamma}\alpha^s.
		\]
		The contribution of this term to
		\eqref{eq:density-increment-arc-bound} is therefore
		$O_{\Delta,d}\!\left(Q_0^{-\gamma}\alpha^s\right).$
		Assuming $Q_0$ is sufficiently large depending only on $\Delta$ and
		$d$, this contribution is at most $\alpha^s/2$ and may be moved to
		the left-hand side. Since
		$\sum_{q\geq1}q^{-1-\eta}<\infty$, there is therefore some
		$q\leq2Q^L$ such that
		\begin{equation}\label{eq:large-box-average}
			\sum_{x\in\Z^d}\abs{\mu_q*g(x)}^K
			\gg_{\Delta,d}\alpha^KN^d.
		\end{equation}
		
		Recall that
		$N_q=\left\lfloor R/(qQ^{L(L+2)+3})\right\rfloor.$
		The function $\mu_q*g$ is supported on
		$[1,N+q(N_q-1)]^d$, and
		\begin{align*}
			\abs{
				[1,N+q(N_q-1)]^d
				\setminus
				[1+q(N_q-1),N]^d
			}&\ll_d qN_qN^{d-1}\\
			&\ll_{\Delta,d}Q^{-L(L+2)-3}N^d\\
			&\ll_{\Delta,d}
			Q_0^{-L(L+2)-3}
			\alpha^{B_0(L(L+2)+3)/(L(L+3)+3)}N^d.
		\end{align*}
		Assuming $B_0$ is sufficiently large depending only on $K$, the
		exponent of $\alpha$ in the last line is at least $K$, so
		\begin{equation}\label{eq:density-increment-boundary-size}
			\abs{
				[1,N+q(N_q-1)]^d
				\setminus
				[1+q(N_q-1),N]^d
			}
			\ll_{\Delta,d}
			Q_0^{-L(L+2)-3}\alpha^KN^d.
		\end{equation}
		Since $\mu_q$ is a probability measure and
		$\sum_{x\in\Z^d}g(x)=0$, we have
		$\sum_{x\in\Z^d}(\mu_q*g)(x)=0.$
		Consequently,
		\begin{equation}\label{eq:positive-negative-mass}
			\sum_{x\in\Z^d}(\mu_q*g)_+(x)
			=
			\sum_{x\in\Z^d}(\mu_q*g)_-(x)
			=
			\frac12\sum_{x\in\Z^d}\abs{(\mu_q*g)(x)}.
		\end{equation}
		Since $g\geq-\alpha$ and $\abs g\leq1$, we also have
		\[
		0\leq(\mu_q*g)_-\leq\alpha,
		\qquad
		\abs{\mu_q*g}\leq1.
		\]
		Assuming $Q_0$ is sufficiently large depending only on $\Delta$ and
		$d$, \eqref{eq:density-increment-boundary-size} and
		\eqref{eq:large-box-average} give
		\begin{equation}\label{eq:restricted-large-moment}
			\sum_{x\in[1+q(N_q-1),N]^d}
			\abs{(\mu_q*g)(x)}^K
			\gg_{\Delta,d}\alpha^KN^d.
		\end{equation}
		On $[1+q(N_q-1),N]^d$ we have
		$\mu_q*g=\mu_q*\1_A-\alpha.$
		Since
		\[
		\abs{(\mu_q*g)(x)}^K
		=
		\bigl((\mu_q*g)_+(x)\bigr)^K
		+
		\bigl((\mu_q*g)_-(x)\bigr)^K,
		\]
		\eqref{eq:restricted-large-moment} implies that at least one of the
		following two estimates holds:
		\begin{align}
			\sum_{x\in[1+q(N_q-1),N]^d}
			\bigl((\mu_q*g)_+(x)\bigr)^K
			&\gg_{\Delta,d}\alpha^KN^d,
			\label{eq:positive-moment-alternative}\\
			\sum_{x\in[1+q(N_q-1),N]^d}
			\bigl((\mu_q*g)_-(x)\bigr)^K
			&\gg_{\Delta,d}\alpha^KN^d.
			\label{eq:negative-moment-alternative}
		\end{align}
		If \eqref{eq:positive-moment-alternative} holds, pigeonholing over at
		most $N^d$ points gives some $y\in[1+q(N_q-1),N]^d$ such that
		$(\mu_q*g)(y)\gg_{\Delta,d}\alpha.$
		Suppose instead that \eqref{eq:negative-moment-alternative} holds.
		Since $(\mu_q*g)_-\leq\alpha$,
		\[
		\begin{aligned}
			\sum_{x\in[1+q(N_q-1),N]^d}(\mu_q*g)_-(x)
			&\geq
			\alpha^{1-K}
			\sum_{x\in[1+q(N_q-1),N]^d}
			\bigl((\mu_q*g)_-(x)\bigr)^K\gg_{\Delta,d}\alpha N^d.
		\end{aligned}
		\]
		It follows from \eqref{eq:positive-negative-mass} that
		\begin{equation}\label{eq:global-positive-mass}
			\sum_{x\in\Z^d}(\mu_q*g)_+(x)
			=
			\sum_{x\in\Z^d}(\mu_q*g)_-(x)
			\gg_{\Delta,d}\alpha N^d.
		\end{equation}
		Since $\mu_q*g$ is supported on $[1,N+q(N_q-1)]^d$ and
		$(\mu_q*g)_+\leq1$, \eqref{eq:density-increment-boundary-size} gives
		\[
		\sum_{x\in
			[1,N+q(N_q-1)]^d\setminus[1+q(N_q-1),N]^d}
		(\mu_q*g)_+(x)
		=
		O_{\Delta,d}\!\left(
		Q_0^{-L(L+2)-3}\alpha^KN^d
		\right).
		\]
		Assuming $Q_0$ is sufficiently large depending only on $\Delta$ and
		$d$, this estimate, \eqref{eq:global-positive-mass}, and
		$\alpha^K\leq\alpha$ give
		\[
		\sum_{x\in[1+q(N_q-1),N]^d}(\mu_q*g)_+(x)
		\gg_{\Delta,d}\alpha N^d.
		\]
		Pigeonholing over at most $N^d$ points again gives some
		$y\in[1+q(N_q-1),N]^d$ such that
		$(\mu_q*g)(y)\gg_{\Delta,d}\alpha.$
		Thus, assuming $\kappa$ is sufficiently small depending only on
		$\Delta$ and $d$, there is some $y\in[1+q(N_q-1),N]^d$ such that
		\[
		(\mu_q*\1_A)(y)
		=
		(\mu_q*g)(y)+\alpha
		>
		(1+\kappa)\alpha.
		\]
		
		Set $N':=N_q$. Since $q\leq2Q^L$ and
		\eqref{eq:nonterminal-scale} holds,
		\[
		\frac{R}{qQ^{L(L+2)+3}}
		\geq
		\frac{R}{2Q^{L(L+3)+3}}
		\geq2.
		\]
		It follows that
		\[
		N'
		=
		\left\lfloor
		\frac{R}{qQ^{L(L+2)+3}}
		\right\rfloor
		\geq
		\frac{R}{2qQ^{L(L+2)+3}}
		\geq
		\frac{R}{4Q^{L(L+3)+3}}
		\gg_{\Delta,d}\alpha^{B_0}N.
		\]
		Finally, put
		\[
		A':=
		\left\{
		n\in[N']^d:
		y-q\bigl(n-(1,\ldots,1)\bigr)\in A
		\right\}.
		\]
		Because $y\in[1+q(N_q-1),N]^d$ and $N'=N_q$, the map
		$n\longmapsto y-q\bigl(n-(1,\ldots,1)\bigr)$
		sends $[N']^d$ injectively into $[N]^d$. By the definition of $\mu_q$,
		\[
		\frac{\abs{A'}}{(N')^d}
		=
		(\mu_q*\1_A)(y)
		>
		(1+\kappa)\alpha.
		\]
		The map
		$n\mapsto y-q\bigl(n-(1,\ldots,1)\bigr)$
		is a composition of translations, a reflection, and a dilation, so $A'$ remains
		$\Delta$-free. This proves alternative~\textup{(2)}.
	\end{proof}
	
	\begin{proof}[Proof of Theorem~\ref{thm:main}]
		Fix the constants supplied by Proposition~\ref{prop:density-increment}.
		If $A=\varnothing$, there is nothing to prove. Otherwise, set
		$A_0=A$, $N_0=N$, and, at every stage $j$, put
		$\alpha_j:=\abs{A_j}/N_j^d.$
		Apply Proposition~\ref{prop:density-increment} at each stage. If
		alternative~\textup{(1)} holds, stop. If alternative~\textup{(2)}
		holds, let $A_{j+1}\subseteq[N_{j+1}]^d$ be the set it supplies.
		Alternative~\textup{(2)} cannot hold indefinitely, since each
		iteration increases the density by a factor greater than $1+\kappa$,
		whereas every density is at most $1$. Let $j_*$ be the first stage at
		which alternative~\textup{(1)} holds. Then
		\begin{equation}\label{eq:terminal-length}
			N_{j_*}\ll_{\Delta,d}\alpha_{j_*}^{-B_0}.
		\end{equation}
		
		Iterating the density increase in alternative~\textup{(2)} gives
		$1\geq\alpha_{j_*}\geq(1+\kappa)^{j_*}\alpha_0,$
		and hence
		$j_*\ll_{\Delta,d}1+\log(1/\alpha_0).$
		The bound
		$N_{j+1}\gg_{\Delta,d}\alpha_j^{B_0}N_j$
		in alternative~\textup{(2)} gives, for $0\leq j<j_*$,
		\[
		\log N_j
		\leq
		\log N_{j+1}
		+B_0\log(1/\alpha_j)
		+O_{\Delta,d}(1).
		\]
		Summing over $j$ and applying \eqref{eq:terminal-length}, we obtain
		\[
		\log N_0
		\leq
		B_0\sum_{j=0}^{j_*}\log(1/\alpha_j)
		+O_{\Delta,d}(j_*+1).
		\]
		Since $\alpha_j\geq\alpha_0$ for every $j$, it follows that
		\[
		\log N_0
		\ll_{\Delta,d}
		\bigl(1+\log(1/\alpha_0)\bigr)^2.
		\]
		Consequently, for suitable constants $C,c>0$ depending only on
		$\Delta$ and $d$,
		\[
		\alpha_0
		\leq
		C\exp\!\left(-c\sqrt{\log N_0}\right).
		\]
		Since $\alpha_0=\abs A/N_0^d$, this proves the theorem.
	\end{proof}

	\appendix
	
	\section{Application to a continuous variant of the problem}\label{app:continuous}
	
	\begin{proof}[Proof of Corollary~\ref{cor:continuous}]
		Let $C_0,c_0>0$ be the constants in
		Theorem~\ref{thm:main}, and put
		$L_\alpha:=1+\log(1/\alpha)$.  Choose
		$D=D_{\Delta,d}$ sufficiently large and set
		\[
		N:=\left\lceil\exp(DL_\alpha^2)\right\rceil.
		\]
		Then
		$C_0\exp(-c_0\sqrt{\log N})<\alpha$ and
		$N\leq2\exp(DL_\alpha^2)$.
		
		For $u\in[0,1/N)^d$, define
		\[
		B_u:=
		\left\{m\in\{0,\ldots,N-1\}^d:
		u+\frac mN\in E\right\}.
		\]
		The translates of $[0,1/N)^d$ by the vectors $m/N$ partition
		$[0,1)^d$, and therefore Fubini's theorem gives
		\[
		\int_{[0,1/N)^d}\abs{B_u}\,du=\abs E.
		\]
		There is consequently some $u$ such that
		$\abs{B_u}\geq\alpha N^d$.  By the choice of $N$ and
		Theorem~\ref{thm:main}, the translate
		$B_u+(1,\ldots,1)\subseteq[N]^d$ contains a nondegenerate
		similar copy of $\Delta$.  Translating back, let
		$y_0,\ldots,y_{K-1}\in B_u$ be its vertices, and let $\rho>0$
		be its similarity ratio.
		
		For any fixed edge $\{i,j\}$ of $\Delta$, the vector
		$y_i-y_j$ is a nonzero integer vector.  Hence
		\[
		\rho^2
		=\frac{\abs{y_i-y_j}^2}{\abs{v_i-v_j}^2}
		\geq\frac1{\abs{v_i-v_j}^2},
		\]
		so $\rho\geq c_\Delta>0$.  Mapping the copy into $E$ by
		$m\mapsto u+m/N$ gives a similar copy with ratio
		\[
		\lambda=\frac{\rho}{N}
		\geq
		\exp\!\left(-C_{\Delta,d}
		\bigl(1+\log(1/\alpha)\bigr)^2\right),
		\]
		after enlarging $C_{\Delta,d}$.  Since the pairwise distances
		determine a nondegenerate simplex up to Euclidean isometry, this
		copy has the form $x+\lambda U\Delta$ for some
		$x\in\R^d$ and $U\in\mathrm O(d)$.
	\end{proof}
	
	\section{Counting solutions to the matrix equation}
	\label{app:matrix-count}
	
	The trace expansion in Lemma~\ref{lem:rectangular-moment} reduces the
	required high-moment estimate to counting bounded integer matrices $U$ and
	$V$ satisfying $UV^{\mathsf T}=0$. The precise estimate
	needed in the main argument is the following.
	
	\begin{lemma}[Integer matrix orthogonality]\label{lem:matrix-orthogonality}
		Let $s,t,n$ be positive integers with
		\[
		n\geq 2\max(s,t)
		\qquad\text{and}\qquad
		n>s+t.
		\]
		Then, for every positive integer $R$,
		\[
		\#\left\{(U,V):
		\begin{array}{l}
			U\in ([-R,R]\cap\Z)^{s\times n},\\
			V\in ([-R,R]\cap\Z)^{t\times n},\\
			UV^{\mathsf T}=0
		\end{array}
		\right\}
		\ll_{n,s,t}R^{n(s+t)-2st}.
		\]
	\end{lemma}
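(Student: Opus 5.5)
The plan is to count by fixing the row spaces. Write $W_U\subseteq\mathbb Q^n$ for the row space of $U$, of dimension $a=\rank U\le s$, and $W_V$ for that of $V$, of dimension $b=\rank V\le t$. The condition $UV^{\mathsf T}=0$ says exactly $W_U\perp W_V$, so $a+b\le n$.

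Let $\Gamma_U=W_U\cap\Z^n$ and $\Gamma_V=W_V\cap\Z^n$ be the saturated lattices. Every row of $U$ is a point of $\Gamma_U$ of sup-norm at most $R$. By Minkowski's second theorem / lattice point counting in the rank-$a$ lattice $\Gamma_U$, the number of such rows is
\[
\ll \sum_{k=0}^{a}\frac{R^k}{\lambda_1\cdots\lambda_k},
\]
where $\lambda_1\le\cdots\le\lambda_a$ are the successive minima of $\Gamma_U$. Two constraints go with this. First, $U$ has rank $a$, so $\Gamma_U$ must contain $a$ independent vectors of norm $\le R$, forcing $\lambda_a\ll R$. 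Second, each $\lambda_i\ge1$. Hence the count for $U$ is $\ll \bigl(R^a/\det\Gamma_U\bigr)^s$ up to constants. The same holds for $V$, with $\Gamma_V$ of rank $b$ and minima $\ll R$.

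Put $\Lambda=\Gamma_V^{\perp_\Z}$, a primitive sublattice of rank $n-b$ with $\det\Lambda=\det\Gamma_V$. Then $\Gamma_U\subseteq\Lambda$ is a primitive $2$-step flag $\Gamma_U\subseteq\Lambda\subseteq\Z^n$. The total count is therefore bounded by
\[
\sum_{a\le s,\ b\le t}\ \sum_{\text{flags}}\ \Bigl(\frac{R^a}{\det\Gamma_U}\Bigr)^{s}\Bigl(\frac{R^b}{\det\Lambda}\Bigr)^{t}.
\]
The summation is restricted to flags where $\Gamma_U$ has $\lambda_a\ll R$ and $\Gamma_V$ has its minima $\ll R$. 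The degenerate cases $U=0$ or $V=0$ contribute at most $R^{nt}$ or $R^{ns}$ respectively. Both are $\le R^{n(s+t)-2st}$ because $n\ge2s$ and $n\ge2t$.

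The main step is to bound the flag sum. I would dyadically decompose by $D_1=\det\Gamma_U$ and $D_2=\det\Lambda$, both ranging up to $R^{O(1)}$ thanks to the minima bounds. Kim's mean-value formula for primitive flags of type $(a,n-b)$ gives the number of flags with $\det\Gamma_U\le X$ and $\det\Lambda\le Y$. I expect this count to be $\ll X^{n}Y^{n}$ divided by appropriate factors, more precisely of the shape $X^{n-b}\,Y^{n}\cdot(\text{relative factor})$. Heuristically, $\Gamma_U$ is counted inside $\Lambda$ with growth $X^{n-b}/Y^{a}$, and $\Lambda$ is counted in $\Z^n$ with growth $Y^{n}$. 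Extra factors like $\log$ may appear at the critical exponent. Summing $D_1^{-s}D_2^{-t}$ against these densities, the exponents $n-b-s$ and $n-a-t$ (after the $Y^{-a}$ correction) are the relevant ones.

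The hypotheses $n\ge2\max(s,t)$ and $n>s+t$ should make the sums dominated by the largest determinants allowed by the minima constraint. Those are $D_1\approx R^{a}$ and $D_2\approx R^{b}$, roughly the top value, and they give the total $R^{as+bt}\cdot R^{a(n-b-s)}\cdot R^{b(n-a-t)}$-type expressions. Then $as+bt+a(n-b-s)+b(n-a-t)=n(a+b)-2ab$. It remains to maximize $n(a+b)-2ab$ over $a\le s$, $b\le t$, $a+b\le n$. This function is bilinear, so its maximum sits at a corner. The value is increasing in $a$ when $n\ge 2b$, which holds since $n\ge2t$, and symmetrically in $b$. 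So the maximum is at $(s,t)$, giving $R^{n(s+t)-2st}$.

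The main obstacle is getting the flag count with the right uniformity. Kim's formula is an asymptotic for flags ordered by a combined height, whereas I need a two-parameter upper bound in $(\det\Gamma_U,\det\Lambda)$. I would handle this by fibering: fix $\Lambda$, and count $\Gamma_U$ inside $\Lambda$ using Schmidt's bound for primitive sublattices of a given lattice. That bound depends on $\det\Lambda$ and on the minima of $\Lambda$, which are controlled because $\Lambda^\perp=\Gamma_V$ has small minima. The case $n=s+t$ at the boundary, where logarithms could appear, is excluded by $n>s+t$.
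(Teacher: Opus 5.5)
Your outline matches the paper's proof. You decompose by $\rank U$ and $\rank V$, pass to the saturated row lattices and the flag $\Gamma_U\subseteq\Gamma_V^{\perp_{\Z}}\subseteq\Z^n$, and bound each fiber by $(R^a/\det\Gamma_U)^s(R^b/\det\Gamma_V)^t$. You then sum dyadically with the positive exponents $n-b-s$ and $n-a-t$, and maximize $n(a+b)-2ab$ at $(s,t)$. All of that is correct; note that $a<n-b$ comes from $a+b\le s+t<n$, not just from $a+b\le n$.

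The gap is the one non-elementary input, the bound
\[
\#\{\Gamma_U\subseteq\Lambda\subseteq\Z^n \text{ primitive}:\ \det\Gamma_U\le X,\ \det\Lambda\le Y\}\ll X^{n-b}Y^{n-a}
\]
for the fixed lattice $\Z^n$. Your exponents are right, but you only conjecture this bound, and you misdiagnose the obstacle. Kim's Theorem~5 is not an asymptotic in a combined height. It is an exact identity, with separate heights $H_1,H_2$, for the $\mu_n$-average of the flag count over $X_n$. What must be supplied is the passage from that average to $\Z^n$. The paper does this by perturbation:
\begin{itemize}
\item On a relatively compact neighborhood $W$ of the identity in $\mathrm{SL}(n,\R)$ on which $\pi$ is injective, the map $A\mapsto Ag$ is an inclusion-preserving bijection $\mathrm{Gr}(\Z^n,j)\to\mathrm{Gr}(\Z^ng,j)$ with $\det(Ag)\le c_j\det A$.
\item Hence the count for $\Z^n$ is at most the count for $\Z^ng$ with heights $c_jH_j$.
\item Integrating over $\pi(W)$, which has positive measure, gives the bound.
\end{itemize}

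Your fallback does not work as stated. It fibers over $\Lambda$ and applies a Schmidt-type count inside each fixed $\Lambda$. But the number of primitive rank-$a$ sublattices of $\Lambda$ with determinant at most $X$ is not $\ll X^{n-b}/(\det\Lambda)^a$ uniformly. Also, the minima of $\Lambda=\Gamma_V^{\perp_{\Z}}$ are not controlled by those of $\Gamma_V$; individually they are bounded only by $\det\Gamma_V\ll R^b$.

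For example, let $\Gamma_V$ be spanned by $(R,1,0,\dots,0)$ and $(0,R,1,0,\dots,0)$, so its minima are $\ll R$. Then $\Lambda=\Z(1,-R,R^2,0,\dots,0)\oplus\Z e_4\oplus\cdots\oplus\Z e_n$. It has $n-3$ minima equal to $1$ and one of size $\asymp R^2\asymp\det\Lambda$. Take $a=1$, $b=2$, $X=R$. This fiber contains $\asymp R^{n-3}$ primitive vectors of norm at most $R$, whereas $X^{n-b}/(\det\Lambda)^a\asymp R^{n-4}$.

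Such skewed fibers are rare. Proving that would require counting the $\Lambda$ by their full sequence of successive minima, not just by $\det\Lambda$. That is exactly the averaging Kim's formula performs for you.
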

	
	In the application, $n=K+1$, $s=\abs S$, and $t=\abs T$. Thus
	$n>s+t$ follows from $\abs S+\abs T=K$, while
	$n\geq2\max(s,t)$ follows from the balancedness of the cut $S\mid T$.
	We now develop the lattice terminology needed for the proof.
	
	\subsection{Lattice preliminaries}
	
	The definitions and elementary determinant facts below are standard;
	see \cite[Chapter~I]{CasselsGeometryNumbers}. We follow
	\cite[\S1]{KimFlags} for the determinant convention and for the notation
	$X_n$, $\mu_n$, $\mathrm{Gr}(L,d)$, and rational flags.
	
	A lattice in $\R^n$ is a subgroup of the form
	$\Gamma=\Z\gamma_1+\cdots+\Z\gamma_d,$
	where $\gamma_1,\ldots,\gamma_d\in\R^n$ are linearly independent.
	These vectors form a basis of $\Gamma$, and $d$ is its rank. The real
	span of $\Gamma$ is
	\[
	\operatorname{span}_{\R}\Gamma
	:=
	\R\gamma_1+\cdots+\R\gamma_d.
	\]
	A sublattice of a lattice $L$ is a subgroup
	$\Gamma\subseteq L$; every such subgroup is itself a lattice.
	
	The Gram matrix of the basis $\gamma_1,\ldots,\gamma_d$ is
	\[
	G(\gamma_1,\ldots,\gamma_d)
	:=
	\bigl(\langle\gamma_i,\gamma_j\rangle\bigr)_{i,j=1}^d.
	\]
	Following Kim, define the determinant of $\Gamma$ by
	\[
	\det\Gamma
	:=
	\lVert\gamma_1\wedge\cdots\wedge\gamma_d\rVert
	=
	\bigl(\det G(\gamma_1,\ldots,\gamma_d)\bigr)^{1/2}.
	\]
	Any two bases of $\Gamma$ differ by a matrix
	$A\in\operatorname{GL}_d(\Z)$. Under this change of basis, the Gram
	matrix $G$ is replaced by $AGA^{\mathsf T}$, whose determinant equals
	$\det G$. Thus $\det\Gamma$ does not depend on the chosen basis.
	
	If $\Gamma\subseteq\Lambda$ have the same rank, then the index
	$[\Lambda:\Gamma]$, namely the number of cosets of $\Gamma$ in
	$\Lambda$, is finite, and
	\begin{equation}\label{eq:lattice-determinant-index}
		\det\Gamma
		=
		[\Lambda:\Gamma]\det\Lambda.
	\end{equation}
	Thus enlarging a lattice without changing its rank can only decrease
	its determinant. Moreover, if $\Gamma\subseteq\Z^n$, then the Gram
	matrix of any basis of $\Gamma$ has integer entries and positive
	integer determinant, so
	\begin{equation}\label{eq:integer-lattice-determinant-lower-bound}
		\det\Gamma\geq1.
	\end{equation}
	
	A lattice in $\R^n$ is full-rank if it has rank $n$, and it is
	unimodular if it is full-rank and has determinant one. If
	$g\in\operatorname{GL}_n(\R)$, then
	$\Z^ng:=\{zg:z\in\Z^n\}$
	is a full-rank lattice of determinant $\abs{\det g}$, where vectors
	are regarded as row vectors. In particular, $\Z^ng$ is unimodular
	when $g\in\operatorname{SL}(n,\R)$.
	
	Following Kim, let
	$X_n := \operatorname{SL}(n,\Z)\backslash\operatorname{SL}(n,\R).$
	The coset of $g\in\operatorname{SL}(n,\R)$ corresponds to the
	unimodular row lattice $\Z^ng$. Left multiplication by an element of
	$\operatorname{SL}(n,\Z)$ merely changes the chosen lattice basis, and
	every unimodular lattice has such a basis matrix. Thus $X_n$ is the
	space of unimodular lattices in $\R^n$. We equip it with the invariant
	probability measure $\mu_n$ induced by Haar measure on
	$\operatorname{SL}(n,\R)$.
	
	A sublattice $\Gamma\subseteq L$ is primitive in
	$L$ if
	$\Gamma = L\cap\operatorname{span}_{\R}\Gamma.$
	For an arbitrary sublattice $\Gamma\subseteq L$, the lattice
	$L\cap\operatorname{span}_{\R}\Gamma$
	is called the saturation of $\Gamma$ in $L$. It is primitive in
	$L$ and has the same rank as $\Gamma$.
	
	For $L\in X_n$ and $1\leq d<n$, let $\mathrm{Gr}(L,d)$ denote
	the set of primitive rank-$d$ sublattices of $L$.
	
	For a sublattice $\Gamma\subseteq\Z^n$, define its integral
	orthogonal complement by
	\[
	\Gamma^{\perp_{\Z}}
	:=
	\left\{
	x\in\Z^n:
	\langle x,\gamma\rangle=0
	\text{ for every }\gamma\in\Gamma
	\right\}
	=
	\bigl(\operatorname{span}_{\R}\Gamma\bigr)^\perp\cap\Z^n.
	\]
	If $\Gamma\subseteq\Z^n$ is primitive of rank $d$, standard
	lattice duality \cite[\S2, especially Lemma~1 and its
	corollary]{SchmidtPointLattices} gives
	\begin{equation}\label{eq:orthogonal-complement-rank}
		\Gamma^{\perp_{\Z}}
		\text{ is primitive of rank }n-d,
	\end{equation}
	\begin{equation}\label{eq:orthogonal-complement-determinant}
		\det\bigl(\Gamma^{\perp_{\Z}}\bigr)
		=
		\det\Gamma,
	\end{equation}
	and
	\begin{equation}\label{eq:double-orthogonal-complement}
		\bigl(\Gamma^{\perp_{\Z}}\bigr)^{\perp_{\Z}}
		=
		\Gamma.
	\end{equation}
	
	For a rank-$d$ lattice $\Gamma$ and $1\leq j\leq d$, its
	$j$-th successive minimum $\lambda_j(\Gamma)$ is the least
	$\lambda>0$ for which $\Gamma$ contains $j$ linearly independent
	vectors of Euclidean norm at most $\lambda$. Thus
	$\lambda_1(\Gamma)\leq\cdots\leq\lambda_d(\Gamma).$
	A standard consequence of the lattice-point estimate in
	\cite[\S5, Lemma~2]{SchmidtPointLattices} is
	\begin{equation}\label{eq:lattice-point-successive-minima}
		\#(\Gamma\cap[-R,R]^n)
		\ll_n
		\prod_{j=1}^d\left(1+\frac{R}{\lambda_j(\Gamma)}\right),
	\end{equation}
	and Minkowski's second theorem
	\cite[Chapter~VIII, Theorem~V]{CasselsGeometryNumbers} gives
	\begin{equation}\label{eq:minkowski-determinant}
		\prod_{j=1}^d\lambda_j(\Gamma)
		\asymp_n\det\Gamma.
	\end{equation}
	If $\lambda_d(\Gamma)\ll_n R$, then
	$1+R/\lambda_j(\Gamma)\ll_n R/\lambda_j(\Gamma)$ for every
	$1\leq j\leq d$. Hence
	\eqref{eq:lattice-point-successive-minima} and
	\eqref{eq:minkowski-determinant} give
	\begin{equation}\label{eq:lattice-point-determinant}
		\#(\Gamma\cap[-R,R]^n)
		\ll_n\frac{R^d}{\det\Gamma}.
	\end{equation}
	
	Finally, we fix Kim's notation for the flags appearing below. Let
	$V(i):=\pi^{i/2}/\Gamma(1+i/2)$
	be the volume of the unit ball in $\R^i$, and adopt the convention
	$\zeta(1)=1$. For $1\leq d<n$, define
	\[
	a(n,d)
	:=
	\frac1n\binom nd
	\prod_{i=1}^d
	\frac{V(n-i+1)\zeta(i)}{V(i)\zeta(n-i+1)}.
	\]
	For
	\[
	d_0=0<d_1<\cdots<d_k<d_{k+1}=n,
	\qquad
	\mathfrak d=(d_1,\ldots,d_k),
	\]
	Kim calls a chain
	\[
	A_1\subseteq\cdots\subseteq A_k\subseteq L,
	\qquad
	A_i\in\mathrm{Gr}(L,d_i),
	\]
	a flag of type $\mathfrak d$, rational with respect to $L$, and
	defines
	\[
	a(n,\mathfrak d)
	:=
	a(n,d_1)
	\prod_{i=1}^{k-1}
	\frac{n-d_{i-1}}{d_{i+1}-d_{i-1}}
	a(n-d_i,d_{i+1}-d_i).
	\]
	
	\subsection{Kim's mean value formula for primitive lattice flags}
	
	We now record the only non-elementary input.
	
	\begin{theorem}[Kim's mean value formula for rational flags
		{\cite[Theorem~5]{KimFlags}}]\label{thm:kim-flag-mean}
		For $H_1,\ldots,H_k\geq0$, the $\mu_n$-average of the number of flags
		$A_1\subseteq\cdots\subseteq A_k$ of type $\mathfrak d$, rational
		with respect to $L$, such that $\det A_i\leq H_i$ for
		$i=1,\ldots,k$, is equal to
		$a(n,\mathfrak d) \prod_{i=1}^k H_i^{d_{i+1}-d_{i-1}}.$
	\end{theorem}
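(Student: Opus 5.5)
The statement is Kim's theorem, which the paper imports from \cite{KimFlags}. If I had to reprove it, I would argue by induction on the length $k$ of the flag, unfolding the rational-flag count against the Haar measure $\mu_n$ one step at a time.

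\textbf{Base case $k=1$.} Let $\Phi_H$ be the indicator of $[0,H]$. The function
\[
L\mapsto\#\{A\in\mathrm{Gr}(L,d):\det A\leq H\}=\sum_{A}\Phi_H(\det A)
\]
is an incomplete Eisenstein series for the maximal parabolic $P_d$. This is because $\operatorname{SL}(n,\Z)$ acts transitively on primitive rank-$d$ sublattices of $\Z^n$, with stabiliser $P_d(\Z)$. The steps are then as follows.
\begin{itemize}
\item Unfold the integral over $X_n$ to an integral over $P_d(\Z)\backslash\operatorname{SL}(n,\R)$.
\item Use the Langlands decomposition $g=u\,m\,a_t\,k$, where $m\in S(\operatorname{GL}_d\times\operatorname{GL}_{n-d})$ and $a_t$ scales the first $d$ coordinates by $t^{1/d}$ and the last $n-d$ by $t^{-1/(n-d)}$. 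The function $\Phi_H(\det A)$ depends only on $t=\det A$.
\item Compute the Haar density in $t$. It is a constant times $t^{n-1}\,dt$, since the modular character of $P_d$ is $t^{n}$ in these coordinates.
\item The remaining integrals give the covolumes of $X_d$ and $X_{n-d}$ (products of $\zeta$ values and sphere volumes, via Siegel's formula $\operatorname{vol}X_n=\prod_{i=2}^n\zeta(i)/V(i)$ up to normalisation), together with the volume of $K/(K\cap P_d)$.
\end{itemize}
Integrating $t^{n-1}$ over $[0,H]$ gives $H^n/n$. The resulting constant should be $a(n,d)$, which is Schmidt's classical count. So the $k=1$ case is a Siegel-type mean value computation.

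\textbf{Inductive step.} For a flag $A_1\subseteq\cdots\subseteq A_k$ of type $\mathfrak d$, first unfold over $A_1$ exactly as above. Let $\pi$ be orthogonal projection onto $(\operatorname{span}_\R A_1)^\perp$. Then $\pi L$ is a lattice of rank $n-d_1$ and determinant $1/\det A_1$. The chains $\pi A_2\subseteq\cdots\subseteq\pi A_k$ are exactly the primitive flags of type $(d_2-d_1,\ldots,d_k-d_1)$ in $\pi L$, and
\[
\det A_i=\det A_1\cdot\det(\pi A_i).
\]
Set $h=\det A_1$ and rescale $\pi L$ by $h^{1/(n-d_1)}$ to make it unimodular. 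A rank-$(d_i-d_1)$ sublattice then has its determinant multiplied by $h^{(d_i-d_1)/(n-d_1)}$. After unfolding, the fibre measure on the rescaled $\pi L$ is $\mu_{n-d_1}$, because the $\operatorname{GL}_{n-d_1}$ block of the Levi factor is integrated against its own Haar measure. The induction hypothesis therefore gives the inner average
\[
a(n-d_1,\mathfrak d')\prod_{i\geq2}\Bigl(\frac{H_i}{h}\,h^{(d_i-d_1)/(n-d_1)}\Bigr)^{d_{i+1}-d_{i-1}},
\]
which is a monomial in $h$. It remains to integrate this monomial against the density $c\,h^{n-1}\,dh$ over $0\leq h\leq H_1$.

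\textbf{Expected obstacle.} The main difficulty is bookkeeping: checking that the exponents telescope so that the $h$-integral produces exactly $H_1^{d_2}$ and the stated rational prefactor $\frac{n-d_{i-1}}{d_{i+1}-d_{i-1}}$. Part of that check is verifying that the integrand stays integrable at $h=0$. A related subtlety is that the nested conditions $\det A_i\leq H_i$ are not independent: the product formula suggests the constraints decouple after rescaling. If the plain computation fails to give an exact product, I would handle this first by passing to the Mellin transform in $(H_1,\ldots,H_k)$, where the nested structure becomes a product of Eisenstein-series residues. The other delicate point is normalising $\mu_n$ against the Langlands measure so that the $\zeta$ and $V$ constants come out as in $a(n,d)$. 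I would calibrate this using the $k=1$ case together with Siegel's volume formula.
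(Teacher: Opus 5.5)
The paper does not prove this statement. It is quoted from \cite{KimFlags} and used only through Lemma~\ref{lem:two-height-flag-count}, i.e.\ for $k=2$ and only as an upper bound with an unspecified constant. So there is no in-paper argument to compare with. As a proof strategy, your unfolding-plus-induction outline is sound, and it is presumably how the factors $\frac{n-d_{i-1}}{d_{i+1}-d_{i-1}}$ in $a(n,\mathfrak d)$ arise in the first place.

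The bookkeeping you list as the main obstacle closes exactly, so the Mellin-transform fallback is unnecessary. Write $e_i:=n-d_i$, so that $d_{i+1}-d_{i-1}=e_{i-1}-e_{i+1}$ and $e_{k+1}=0$. The power of $h$ produced by your inner average is then $h^{-\Sigma/(n-d_1)}$, where
\[
\Sigma=\sum_{i=2}^{k}(n-d_i)(d_{i+1}-d_{i-1})=\sum_{i=2}^{k}e_i(e_{i-1}-e_{i+1})=e_1e_2-e_ke_{k+1}=(n-d_1)(n-d_2).
\]
The inner average is therefore $a(n-d_1,\mathfrak d')\,h^{d_2-n}\prod_{i\geq2}H_i^{d_{i+1}-d_{i-1}}$. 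Against the base-case density $n\,a(n,d_1)h^{n-1}\,dh$ you get
\[
\int_0^{H_1}n\,a(n,d_1)\,h^{d_2-1}\,dh=\frac{n}{d_2}\,a(n,d_1)\,H_1^{d_2}.
\]
The factor $\frac{n}{d_2}=\frac{n-d_0}{d_2-d_0}$ combines with the leading factor $a(n-d_1,d_2-d_1)$ of $a(n-d_1,\mathfrak d')$ to give the $i=1$ factor of $a(n,\mathfrak d)$. The remaining factors of $a(n-d_1,\mathfrak d')$ reindex exactly to the $i\geq2$ factors. Integrability at $h=0$ is automatic since $d_2\geq1$. The height conditions do decouple once $A_1$ is fixed, because each becomes a separate condition on $\det\pi A_i$ in the rescaled lattice.

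What is genuinely left undone is the base-case constant. You assert that the unfolded integral ``should be'' $a(n,d)$, but identifying it requires actually matching the Langlands measure with $\mu_n$. That means Siegel's volume formula for $X_n$, $X_d$ and $X_{n-d}$, the volume of $K/(K\cap P_d)$, and $\operatorname{vol}(N(\Z)\backslash N)$. The inductive step needs nothing more, since the density $n\,a(n,d_1)t^{n-1}\,dt$ is just the base case restated. For the paper's purposes any constant suffices, so your argument with an unidentified constant already yields Lemma~\ref{lem:two-height-flag-count}.

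One small slip: in $g=u\,m\,a_t\,k$, the factor $m$ must range over block-diagonal matrices with $\abs{\det m_1}=\abs{\det m_2}=1$, not over all of $S(\mathrm{GL}_d\times\mathrm{GL}_{n-d})$. Otherwise $a_t$ is absorbed into $m$ and the $t$-density is not isolated.
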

	
	We transfer the case $k=2$ from a random unimodular lattice to the
	fixed lattice $\Z^n$.
	
	\begin{lemma}[Two-height flag count]\label{lem:two-height-flag-count}
		Fix positive integers $n,d_1,d_2$ with
		$0<d_1<d_2<n.$
		Uniformly for $H_1,H_2\geq1$,
		\[
		\#\left\{(A_1,A_2):
		\begin{array}{l}
			A_1\in\mathrm{Gr}(\Z^n,d_1),\\
			A_2\in\mathrm{Gr}(\Z^n,d_2),\\
			A_1\subseteq A_2,\\
			\det A_1\leq H_1,\quad \det A_2\leq H_2
		\end{array}
		\right\}
		\ll_{n,d_1,d_2}H_1^{d_2}H_2^{n-d_1}.
		\]
	\end{lemma}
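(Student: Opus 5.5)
The plan is to transfer Kim's average over $X_n$ to the fixed lattice $\Z^n$ by a local-constancy argument. We restrict the $\mu_n$-average in Theorem~\ref{thm:kim-flag-mean} to a small compact neighbourhood of the identity coset in $X_n$, where determinants of sublattices change by at most a bounded factor.

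Fix a compact neighbourhood $\mathcal U\subseteq\operatorname{SL}(n,\R)$ of the identity, say $\{g:\norm{g}_{\mathrm{op}},\norm{g^{-1}}_{\mathrm{op}}\leq2\}$. Its image in $X_n$ has measure $\mu_n(\pi(\mathcal U))=c_n>0$, since Haar measure gives open sets positive mass.

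For $g\in\mathcal U$, the map $x\mapsto xg$ is a bijection from $\mathrm{Gr}(\Z^n,d)$ onto $\mathrm{Gr}(\Z^ng,d)$. It preserves inclusions, hence maps flags to flags. For a rank-$d$ sublattice $A$ with basis $\gamma_1,\ldots,\gamma_d$, we have $\gamma_1g\wedge\cdots\wedge\gamma_dg=(\wedge^dg)(\gamma_1\wedge\cdots\wedge\gamma_d)$. Since $\norm{\wedge^dg}\leq\norm{g}^d$ and similarly for $g^{-1}$, this gives
\[
2^{-d}\det A\leq\det(Ag)\leq2^d\det A .
\]
Consequently, for every $g\in\mathcal U$, the number of flags in $\Z^ng$ with $\det A_i\leq2^{d_i}H_i$ is at least the count $N(H_1,H_2)$ on the left-hand side of the lemma.

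Averaging over $\pi(\mathcal U)$ and bounding by the full average via nonnegativity, Theorem~\ref{thm:kim-flag-mean} with $k=2$ and $\mathfrak d=(d_1,d_2)$ gives
\[
c_nN(H_1,H_2)\leq a(n,\mathfrak d)(2^{d_1}H_1)^{d_2}(2^{d_2}H_2)^{n-d_1}.
\]
This is the claimed bound. The exponents come from $d_{i+1}-d_{i-1}$, namely $d_2-0$ and $n-d_1$.

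The main subtlety is that the identification $X_n\ni\pi(g)\mapsto\Z^ng$ is not injective on $\mathcal U$. The count at the lattice $\Z^ng$ depends only on the lattice, so we integrate over the set $\pi(\mathcal U)$ rather than over $\mathcal U$, and we only need the measure of that set to be positive. Note also that $a(n,\mathfrak d)$ is finite, using the convention $\zeta(1)=1$. No condition $H_i\geq1$ is really needed beyond making the statement nonvacuous.
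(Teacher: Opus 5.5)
Your proposal is correct and follows essentially the same route as the paper: apply Kim's formula with $k=2$, $\mathfrak d=(d_1,d_2)$, compare $\Z^n$ with $\Z^ng$ for $g$ in a fixed compact neighbourhood of the identity via the inclusion-preserving bijection $A\mapsto Ag$, and integrate over the positive-measure image of that neighbourhood in $X_n$. The differences are cosmetic. You get explicit distortion constants $2^{d}$ from $\norm{\wedge^d g}\leq\norm{g}^d$, where the paper appeals to relative compactness. You also avoid the injectivity of $\pi$ by integrating over $\pi(\mathcal U)$ directly, where the paper chooses a neighbourhood on which $\pi$ is injective.
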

	
	\begin{proof}
		Take $k=2$ and $\mathfrak d=(d_1,d_2)$ in
		Theorem~\ref{thm:kim-flag-mean}. Since $d_0=0$ and $d_3=n$, it gives
		\begin{equation}\label{eq:kim-two-step-mean}
			\begin{aligned}
				\int_{X_n}
				\#\left\{(A_1,A_2):
				\begin{array}{l}
					A_1\in\mathrm{Gr}(L,d_1),\\
					A_2\in\mathrm{Gr}(L,d_2),\\
					A_1\subseteq A_2,\\
					\det A_1\leq H_1,\quad \det A_2\leq H_2
				\end{array}
				\right\}
				\,d\mu_n(L)&=
				a(n,\mathfrak d)
				H_1^{d_2-d_0}H_2^{d_3-d_1}\\
				&=
				a(n,\mathfrak d)H_1^{d_2}H_2^{n-d_1}.
			\end{aligned}
		\end{equation}
		
		It remains to pass from this average to the particular lattice $\Z^n$.
		Let
		$\pi:\operatorname{SL}(n,\R)\longrightarrow X_n$
		be the quotient map. Choose a relatively compact open neighborhood
		$W$ of the identity on which $\pi$ is injective, and put
		$\mathcal U=\pi(W)$. Then $\mu_n(\mathcal U)>0$. For
		$A\subseteq\Z^n$ and $g\in\operatorname{SL}(n,\R)$, write
		$Ag:=\{ag:a\in A\}$. For $1\leq j<n$, the relative compactness of
		$W$ gives a constant
		$c_j<\infty$ such that
		\[
		\det(Ag)
		\leq
		c_j\det A
		\qquad
		\bigl(A\in\mathrm{Gr}(\Z^n,j),\ g\in W\bigr).
		\]
		Right multiplication by $g$ maps $\mathrm{Gr}(\Z^n,j)$
		bijectively onto $\mathrm{Gr}(\Z^ng,j)$ and preserves inclusions.
		Hence, for every $g\in W$,
		\begin{align*}
			&\#\left\{(A_1,A_2):
			\begin{array}{l}
				A_i\in\mathrm{Gr}(\Z^n,d_i)\quad(i=1,2),\\
				A_1\subseteq A_2,\\
				\det A_1\leq H_1,\quad \det A_2\leq H_2
			\end{array}
			\right\}\leq
			\#\left\{(A_1,A_2):
			\begin{array}{l}
				A_i\in\mathrm{Gr}(\Z^ng,d_i)\quad(i=1,2),\\
				A_1\subseteq A_2,\\
				\det A_1\leq c_{d_1}H_1,\\
				\det A_2\leq c_{d_2}H_2
			\end{array}
			\right\}.
		\end{align*}
		Integrating this inequality over $\mathcal U$ and applying
		\eqref{eq:kim-two-step-mean} with
		$(H_1,H_2)$ replaced by
		$(c_{d_1}H_1,c_{d_2}H_2)$, we obtain
		\begin{align*}
			&\mu_n(\mathcal U)
			\#\left\{(A_1,A_2):
			\begin{array}{l}
				A_i\in\mathrm{Gr}(\Z^n,d_i)\quad(i=1,2),\\
				A_1\subseteq A_2,\\
				\det A_1\leq H_1,\quad \det A_2\leq H_2
			\end{array}
			\right\}\leq
			a(n,\mathfrak d)
			(c_{d_1}H_1)^{d_2}(c_{d_2}H_2)^{n-d_1}.
		\end{align*}
		Since $W$ is fixed in terms of $n$, this proves the result.
	\end{proof}
	
	\subsection{Proof of the matrix count}
	
	\begin{proof}[Proof of Lemma~\ref{lem:matrix-orthogonality}]
		Decompose the count according to
		\[
		d_1=\rank U,
		\qquad
		d_2=n-\rank V.
		\]
		If $d_1=0$, then $U=0$, and there are $O(R^{nt})$ choices for
		$V$. This is at most the asserted bound because
		\[
		n(s+t)-2st-nt=s(n-2t)\geq0.
		\]
		The case $d_2=n$, in which $V=0$, is symmetric. We may therefore
		assume
		\[
		1\leq d_1\leq s,
		\qquad
		n-t\leq d_2\leq n-1.
		\]
		Since
		$d_1+(n-d_2)\leq s+t<n,$
		we have $d_1<d_2$.
		
		The rows of $U$ generate a sublattice of $\Z^n$. Its saturation
		is the intersection of $\Z^n$ with the real span of those rows.
		Define this saturated row lattice, and its analogue for $V$, by
		\[
		\Gamma_U
		:=\operatorname{rowspan}_{\R}(U)\cap\Z^n,
		\qquad
		\Gamma_V
		:=\operatorname{rowspan}_{\R}(V)\cap\Z^n.
		\]
		Here $\operatorname{rowspan}_{\R}$ denotes the real span of the rows.
		The lattices $\Gamma_U$ and $\Gamma_V$ are primitive of ranks
		$d_1$ and $n-d_2$, respectively. The equation
		$UV^{\mathsf T}=0$ says that their real row spaces are orthogonal.
		Consequently, if
		$\Lambda_V:=\Gamma_V^{\perp_{\Z}},$
		then $\Gamma_U\subseteq\Lambda_V$, and
		\eqref{eq:orthogonal-complement-rank} shows that $\Lambda_V$ is
		primitive of rank $d_2$. Thus
		\begin{equation}\label{eq:matrix-induced-flag}
			\Gamma_U\subseteq\Lambda_V\subseteq\Z^n
		\end{equation}
		is a flag of type $(d_1,d_2)$, rational with respect to $\Z^n$.
		Moreover, \eqref{eq:orthogonal-complement-determinant} gives
		$\det\Lambda_V = \det\Gamma_V,$
		while \eqref{eq:double-orthogonal-complement} gives
		$\Gamma_V=\Lambda_V^{\perp_{\Z}}.$
		Thus the flag in \eqref{eq:matrix-induced-flag} determines the pair
		$(\Gamma_U,\Gamma_V)$.
		
		We next count the matrices giving a fixed pair of row lattices. Since
		$U$ has rank $d_1$, it has $d_1$ linearly independent rows, each
		of Euclidean norm $O_n(R)$. Hence
		$\lambda_{d_1}(\Gamma_U)\ll_n R.$
		Moreover, the lattice generated by these $d_1$ rows has determinant
		$O_n(R^{d_1})$ by Hadamard's inequality. Since $\Gamma_U$ is its
		saturation, \eqref{eq:lattice-determinant-index} shows that saturation
		can only decrease determinant. Together with
		\eqref{eq:integer-lattice-determinant-lower-bound}, this gives
		$1\leq\det\Gamma_U\ll_n R^{d_1}.$
		The same argument gives
		\[
		\lambda_{n-d_2}(\Gamma_V)\ll_n R,
		\qquad
		1\leq\det\Gamma_V\ll_n R^{n-d_2}.
		\]
		It follows from \eqref{eq:lattice-point-determinant} that, once
		$\Gamma_U$ and $\Gamma_V$ are fixed, the numbers of possible
		matrices are bounded by
		\begin{equation}\label{eq:fixed-row-lattice-counts}
			\#\{U\}
			\ll_{n,s}\left(
			\frac{R^{d_1}}{\det\Gamma_U}
			\right)^s,
			\qquad
			\#\{V\}
			\ll_{n,t}\left(
			\frac{R^{n-d_2}}{\det\Gamma_V}
			\right)^t.
		\end{equation}
		
		Let $N_{d_1,d_2}(R)$ denote the contribution from matrices with these
		fixed ranks. Choose integers $J_1,J_2\geq0$ such that all possible
		determinants are covered by the dyadic ranges
		\[
		2^j\leq\det\Gamma_U<2^{j+1}
		\quad(0\leq j\leq J_1),
		\qquad
		2^\ell\leq\det\Gamma_V<2^{\ell+1}
		\quad(0\leq\ell\leq J_2),
		\]
		with
		\[
		2^{J_1}\asymp_n R^{d_1},
		\qquad
		2^{J_2}\asymp_n R^{n-d_2}.
		\]
		For fixed $j$ and $\ell$, the orthogonal-complement identity and
		Lemma~\ref{lem:two-height-flag-count}, applied with heights
		$2^{j+1}$ and $2^{\ell+1}$, show that the number of possible pairs
		$(\Gamma_U,\Gamma_V)$ is
		\[
		\ll_{n,d_1,d_2}2^{jd_2}2^{\ell(n-d_1)}.
		\]
		For each such pair, \eqref{eq:fixed-row-lattice-counts} bounds the
		number of matrices by
		\[
		\ll_{n,s,t}
		R^{d_1s+(n-d_2)t}2^{-js}2^{-\ell t}.
		\]
		Thus
		\begin{align*}
			N_{d_1,d_2}(R)
			&\ll_{n,s,t}
			R^{d_1s+(n-d_2)t}
			\sum_{j=0}^{J_1}\sum_{\ell=0}^{J_2}
			2^{j(d_2-s)}2^{\ell(n-d_1-t)}
			\\
			&=
			R^{d_1s+(n-d_2)t}
			\left(\sum_{j=0}^{J_1}2^{j(d_2-s)}\right)
			\left(\sum_{\ell=0}^{J_2}2^{\ell(n-d_1-t)}\right)
			\\
			&\ll_{n,s,t}
			R^{d_1s+(n-d_2)t}
			2^{J_1(d_2-s)}2^{J_2(n-d_1-t)}
			\\
			&\ll_{n,s,t}
			R^{d_1s+(n-d_2)t}
			R^{d_1(d_2-s)}
			R^{(n-d_2)(n-d_1-t)}
			\\
			&=
			R^{d_1s+(n-d_2)t+d_1d_2-d_1s
				+(n-d_2)(n-d_1)-(n-d_2)t}
			\\
			&=
			R^{d_1d_2+(n-d_2)(n-d_1)}
			\\
			&=
			R^{n^2-n(d_1+d_2)+2d_1d_2}.
		\end{align*}
		
		It remains to maximize the exponent
		\[
		E(d_1,d_2):=n^2-n(d_1+d_2)+2d_1d_2
		\]
		over $1\leq d_1\leq s$ and $n-t\leq d_2\leq n-1$. We have
		\[
		E(d_1+1,d_2)-E(d_1,d_2)
		=2d_2-n
		\geq n-2t
		\geq0
		\]
		and
		\[
		E(d_1,d_2-1)-E(d_1,d_2)
		=n-2d_1
		\geq n-2s
		\geq0.
		\]
		Thus $E(d_1,d_2)$ is maximized at $d_1=s$ and $d_2=n-t$, where
		\[
		E(s,n-t)=n(s+t)-2st.
		\]
		Summing over the $O_n(1)$ choices for $(d_1,d_2)$ completes the proof.
	\end{proof}
	
	\bibliographystyle{amsplain}
	\bibliography{references}
	
\end{document}